\definecolor{violet}{rgb}{0.0,0.2,0.7}
\definecolor{rouge2}{rgb}{0.8,0.0,0.2}
\newcommand{\R}{\mathbb{R}}
\newcommand{\Q}{\mathbb{Q}}
\newcommand{\N}{\mathbb{N}}
\renewcommand{\d}{\partial}
\newcommand{\vp}{\varphi}
\newcommand{\wt}{\widetilde}
\newcommand{\ep}{\varepsilon}
\renewcommand{\epsilon}{\varepsilon}
\newcommand{\la}{\langle}
\newcommand{\ra}{\rangle}
\renewcommand{\ge}{\geqslant}
\renewcommand{\le}{\leqslant}
\renewcommand{\leq}{\leqslant}
\renewcommand{\geq}{\geqslant}
\newcommand{\Ric}{\mathrm{Ric} \,}
\newcommand{\codim}{\mathrm{codim}}
\newcommand{\om}{\omega}
\newcommand{\omi}{\omega_{\infty}}
\newcommand{\omke}{\omega_{\rm KE}}
\newcommand{\omkey}{\omega_{{\rm KE}, y}}
\newcommand{\omked}{\omega_{{\rm KE}, \delta}}
\newcommand{\omcan}{\omega_{\rm can}}
\newcommand{\ddc}{dd^c}
\newcommand{\Supp}{\mathrm {Supp}}
\newcommand{\vpd}{\vp_{\delta}}
\newcommand{\ssm}{\smallsetminus}
\newcommand{\vpde}{\varphi_{\delta, \ep}}
\newcommand{\pde}{\phi_{\delta, \ep}}
\newcommand{\dbar}{\bar \partial}
\newcommand{\cF}{\mathcal F}
\numberwithin{equation}{section}
\theoremstyle{plain}
\newtheorem{theo}{Theorem}[section]
\newtheorem{prop}[theo]{Proposition}
\newtheorem{coro}[theo]{Corollary}
\newtheorem{lemm}[theo]{Lemma}
\newtheorem{conj}[theo]{Conjecture}
\newtheorem{set}[theo]{Setting}
\newtheorem{defin}[theo]{Definition}
\newtheorem{defiprop}[theo]{Definition-Proposition}
\newtheorem{rema}[theo]{Remark}
\newtheorem{claim}[theo]{Claim}
\setlist[enumerate]{label=(\thetheo.\arabic*), before={\setcounter{enumi}{\value{equation}}}, after={\setcounter{equation}{\value{enumi}}}}
\newtheorem{bigthm}{Theorem}
\begin{document}

\title[Variation of singular Kähler-Einstein metrics]{Variation of singular Kähler-Einstein metrics:\\ positive Kodaira dimension}

\author{Junyan Cao}
\address{Institut de Mathématiques de Jussieu, Université Paris 6, 4 place Jussieu, 75252 Paris, France}
\email{junyan.cao@imj-prg.fr}

\author{Henri Guenancia}
\address{Institut de Mathématiques de Toulouse; UMR 5219, Université de Toulouse; CNRS, UPS, 118 route de Narbonne, F-31062 Toulouse Cedex 9, France}
\email{henri.guenancia@math.cnrs.fr}

\author{Mihai P\u{a}un}
\address{Institut für Mathematik, Universität Bayreuth, 95440 Bayreuth, Germany}
\email{mihai.paun@uni-bayreuth.de}

\begin{abstract} 
  Given a K\"ahler fiber space $p:X\to Y$ whose generic fiber is of general type, we prove that the fiberwise singular Kähler-Einstein metric induces
a semipositively curved metric on the relative canonical bundle $K_{X/Y}$ of $p$. We also propose a conjectural generalization of this result for relative
  twisted Kähler-Einstein metrics. Then we show that our conjecture holds true
  if the Lelong numbers of the twisting current are zero. Finally, we explain the relevance of our conjecture for the study of fiber-wise Song-Tian metrics (which represent the analogue of KE metrics for fiber spaces whose generic fiber has positive but not necessarily maximal Kodaira dimension).  
\end{abstract}
\date{\today}
\maketitle
\tableofcontents

\section*{Introduction}

\noindent Let $p:X\to Y$ be a K\"ahler fiber space. By this we mean that $p$ is a
proper, surjective holomorphic map with connected fibers such that the total space $X$ is  Kähler. Important questions in birational geometry (such as e.g. Iitaka
$C_{nm}$ conjecture) are treated by investigating the properties of direct images
\begin{equation}\label{mp1}
p_\star(mK_{X/Y})
\end{equation}
where $m$ is a positive integer, and $K_{X/Y}:=  K_X- p^\star(K_Y)$ is the relative canonical bundle of the map $p$. In other words, one considers the
variation of the pluricanonical linear series $\displaystyle
H^0(X_y, mK_{X/Y}|_{X_y})$ for $y\in Y$ and some fixed $m\gg 0$.
\medskip

\noindent In this article we will adopt a slightly different point of view by working
with an
object which ``encodes'' the asymptotic behavior of the entire canonical ring $\displaystyle
\bigoplus_m p_\star(mK_{X/Y})$. If the generic fiber of $p$ is of general type, then this turns out to be the singular K\"ahler-Einstein metric. 
The direct image \eqref{mp1} is positively curved, and our main concern in this article is to show that the same holds true for the metric induced on $K_{X/Y}$ by fiber-wise singular K\"ahler-Einstein metrics.

\subsection*{General set-up and main results}

Let $p:X\to Y$ be a K\"ahler fiber space. We will systematically use the notation
$Y°\subset Y$ for a set contained in the regular values of the map $p$ such that
the complement $Y\setminus Y°$ is analytic. Let $X° :=p^{-1} (Y°)$ be its inverse image.

Let $(L, h_L)$ be a $\Q$-line bundle endowed with a singular metric $h_L$
whose curvature current is positive, i.e.
\begin{equation}\label{mp03}
\Theta_{h_L}(L)\geq 0.
\end{equation} 
We assume for the moment that the relative adjoint bundle $K_{X/Y}+L$ is $p$-big. In many important geometric settings (including the case $\mathcal I(h_L)= \mathcal O_X$) for every $y\in Y$ general enough
there exists a unique closed positive current $\displaystyle \om_{{\rm KE},y}\in c_1(K_{X_y}+L)$ such that
\begin{equation}\label{mp3}
\Ric \om_{{\rm KE},y}= -\om_{{\rm KE},y}+\Theta_{h_L}(L)
\end{equation}
The precise framework for \eqref{mp3} to hold will become clear in Section~\ref{setting}.
In what follows $\om_{{\rm KE},y}$  will be referred to as \emph{singular K\"ahler-Einstein metric} by analogy with the case $L$ trivial and $K_{X_y}$ ample.  

\medskip

\noindent The results we establish in this article are converging towards the following general problem.

\begin{conj}
\label{conjecture}
In the above set-up, the relative Kähler-Einstein metrics $(\om_{{\rm KE},y})_{y\in Y°}$ induce a metric $e^{-\phi_{\rm KE}}$ on $K_{X°/Y°} +L|_{X°}$ which is positively curved and which extends canonically across $X\ssm X°$ to a positively curved metric on $K_{X/Y}+L$.
\end{conj}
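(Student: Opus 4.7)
The plan is to combine a regularization scheme for the twisted Kähler-Einstein equation with a Schumacher-type maximum principle argument on the total space. The statement naturally splits in two: first one establishes positivity of $e^{-\phi_{\rm KE}}$ on the smooth locus $X°$, then one extends the resulting psh potential across the analytic complement $X \setminus X°$ by a Hartogs-type argument for psh functions.

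\textbf{Positivity on $X°$.} Pick a smooth reference metric $e^{-\phi_0}$ on $K_{X/Y} + L$ and a relatively Kähler form $\omega$, so that on $X°$ one writes $\phi_{\rm KE} = \phi_0 + \vp_{\rm KE}$. Via \eqref{mp3}, the fiberwise potential satisfies a complex Monge-Ampère equation of the form $(\omega|_{X_y} + \ddc \vp_{\rm KE})^n = e^{\vp_{\rm KE}} \mu$, where $\mu$ is a reference measure built from $h_L$ and a smooth volume form. To access horizontal regularity, I would regularize $h_L$ by smooth metrics $h_{L,\ep}$ with $\Theta_{h_{L,\ep}}(L) \geq -\ep \omega$ à la Demailly, solve the corresponding smooth family of KE-type equations to obtain $\vp_{{\rm KE}, \ep}$, and then differentiate in the base parameter. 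Following Schumacher's approach (which underpins the $L = 0$ case established earlier in the paper), a suitable trace of the horizontal Hessian of $\phi_{{\rm KE},\ep}$ satisfies a fiberwise elliptic inequality whose right-hand side is controlled from below by the twisted KE equation. The maximum principle then yields $\ddc \phi_{{\rm KE}, \ep} \geq -\ep \omega$ on $X°$, and letting $\ep \to 0$ gives the desired positivity of $\phi_{\rm KE}$.

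\textbf{Extension across $X \setminus X°$.} Since $X \setminus X°$ is analytic of positive codimension in $X$, the potential $\phi_{\rm KE}$ extends uniquely across it to a psh function on $X$, provided it is locally bounded from above near each point of $X \setminus X°$. Such an upper bound should be obtained by realizing $\phi_{\rm KE}$ as an upper envelope of psh functions with prescribed mass, using the positivity of the direct image $p_\star(m(K_{X/Y} + L))$ for $m$ large to produce uniform fiberwise estimates that extend semi-continuously to the degenerate fibers.

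\textbf{Main obstacle.} The serious difficulty is the possible presence of positive Lelong numbers of $\Theta_{h_L}(L)$. Demailly regularization loses only $\ep \omega$ of positivity globally, but the fiberwise Monge-Ampère equation does not admit uniform a priori estimates once the mass of the twist concentrates along the fibers, and the Schumacher-type maximum principle then loses its uniformity in $\ep$. This is precisely why the paper's partial result requires vanishing Lelong numbers: under that assumption, $h_{L,\ep} \to h_L$ converges strongly enough to preserve uniform fiberwise control. Extending the conjecture to arbitrary $h_L$ would likely require a fibered analytic regularization of $h_L$ compatible with the geometry of $p$, together with a finer analysis of the behavior of $\vp_{\rm KE}$ near the polar locus of $h_L$, possibly via a suitable birational modification separating vertical from horizontal singularities.
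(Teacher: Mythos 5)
The statement you are proving is a \emph{conjecture} in the paper: the authors do not prove it in full generality. What they actually establish is (i) a reduction of Conjecture~\ref{conjecture} to the special case where the twist is a klt snc divisor plus a smooth semipositive form (Theorem~\ref{thmaaa}), and (ii) the conjecture under the additional hypothesis that the Lelong numbers of $\Theta_{h_L}(L)$ vanish (Theorem~\ref{thmaint}). Your proposal, read as a proof of the full conjecture, therefore cannot be correct as it stands, and indeed it does not close the gap; but more importantly, the mechanism you propose for the positivity step is one the authors explicitly argue is unavailable here.

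The genuine gap is in your ``Positivity on $X°$'' step. The Schumacher-type maximum principle requires the fiberwise Kähler--Einstein metric to be smooth, and to depend smoothly (at least $C^2$) on the base parameter, so that one can differentiate the Monge--Ampère equation horizontally and run an elliptic inequality for the geodesic-curvature-type quantity on the total space. In the present setting $K_{X_y}+L$ is only assumed $p$-big: the current $\om_{{\rm KE},y}$ has minimal singularities and is genuinely singular along the augmented base locus of $K_{X_y}+L$, even when $h_L$ is smooth. There is no smooth family $\vp_{{\rm KE},\ep}$ to differentiate, the implicit function theorem does not apply, and the maximum principle has no room to operate near the base points. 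The paper states this obstruction explicitly (``it is difficult to conceive that Schumacher method can be used in the presence of base points''), and this is precisely why the authors instead follow Tsuji's route: after reducing to a relative Zariski decomposition $K_X+L=A+E$, to $h_L$ smooth, and to $c_1(L)\in H^2(X,\Z)$ via a Ricci iteration (Propositions~\ref{iteratiooon} and~\ref{convricci}), they realize the fiberwise KE metric as a limit of iterated, renormalized Bergman kernels (Theorem~\ref{mainpropsubsect16}), whose plurisubharmonic variation is supplied by \cite{BP} rather than by any maximum principle. You also misattribute the $L=0$, general-type case to Schumacher's method: that case is Theorem~\ref{thmaint} of this paper and is proved by the Bergman-kernel iteration; Schumacher's argument only covers $K_{X_y}$ ample. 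Finally, your diagnosis of the ``main obstacle'' is partly misplaced: the vanishing-Lelong-number hypothesis is needed for the reduction to smooth $h_L$ (Proposition~\ref{conver}) and because the Bergman kernel expansion requires regularity of the weight, not because Demailly regularization fails to interact with a maximum principle.

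Your extension step is in the right spirit but is only a sketch of what the paper actually does in Proposition~\ref{extension}: there, the uniform local upper bound on the weights near $X\ssm X°$ is obtained by writing the fiberwise potential as a limit of normalized Bergman kernels $H_{m,y}$ via Demailly approximation, bounding the $L^{2/m}$ norms uniformly in $y$ and $m$ by Hölder and the fiberwise volume $(A_y^n)$, and applying the $L^{2/m}$ Ohsawa--Takegoshi extension theorem of \cite{BP2}. To make your envelope argument rigorous you would need exactly these ingredients.
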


As consequence of important approximation results in pluripotential theory
we show that a much more general form of the conjecture above would follow provided that
one is able to deal with the case where
$\Theta_{h_L}(L)$ equals the current of integration along a divisor with simple normal crossings support and coefficients in $(0,1)$ plus a smooth form, cf. Theorem~\ref{thmaaa}.
\medskip

\noindent Our main theorem states the following.
\begin{bigthm}
\label{thmaint} Conjecture \ref{conjecture} holds true if the Lelong numbers of the curvature current corresponding to $h_L$ are zero on the $p$-inverse image of a Zariski open subset of $Y$.
\end{bigthm}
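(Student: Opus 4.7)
The plan is to approximate $h_L$ by smooth metrics with nearly nonnegative curvature, to apply the smooth semipositive twist case of Conjecture \ref{conjecture}, and then to pass to the limit. The zero Lelong number hypothesis is precisely what is needed to ensure that Demailly's regularization provides approximating metrics which are smooth on a Zariski open subset of $X$, so that the fiberwise KE problem remains well-posed in the family.

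Concretely, fix a smooth reference metric $h_0$ on $L$, so that $h_L = h_0 \, e^{-\vp_L}$ for some quasi-psh $\vp_L$ on $X$, and let $A$ be an ample line bundle on $X$ carrying a smooth metric $h_A$ of curvature $\om_X > 0$. Let $Y_1 \subset Y$ denote the Zariski open subset from the hypothesis, so that $\Theta_{h_L}(L)$ has zero Lelong numbers on $X_1 := p^{-1}(Y_1)$. Demailly's approximation theorem produces a decreasing sequence $\vp_{L, \ep} \searrow \vp_L$ of quasi-psh functions on $X$ such that $\Theta_{h_0}(L) + \ddc \vp_{L, \ep} \geq -\ep \, \om_X$ and, thanks to the Lelong number assumption, $\vp_{L, \ep}$ is smooth on $X_1$ for all $\ep > 0$. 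Setting $L_\ep := L + \ep A$ with the smooth metric $h_{L_\ep} := h_0 \cdot h_A^\ep \cdot e^{-\vp_{L, \ep}}$, the curvature $\Theta_{h_{L_\ep}}(L_\ep) \geq 0$ is a smooth semipositive form on $X_1$. For $\ep > 0$ small, $K_{X/Y} + L_\ep$ remains $p$-big, and the twisted fiberwise KE equation
\[
\Ric \om_{{\rm KE}, y, \ep} = -\om_{{\rm KE}, y, \ep} + \Theta_{h_{L_\ep}}(L_\ep)|_{\Xy}
\]
admits a unique solution $\om_{{\rm KE}, y, \ep} \in c_1(\KXy + L_\ep|_{\Xy})$ for $y \in Y^\circ \cap Y_1$. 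Since the twist is now smooth and semipositive, the corresponding instance of Conjecture \ref{conjecture} is accessible via Berndtsson-type direct image positivity combined with the Schumacher-P\u{a}un maximum principle applied to fiberwise KE potentials, producing a positively curved metric $e^{-\phi_{{\rm KE}, \ep}}$ on $K_{X/Y} + L_\ep$ that extends canonically across $X \sm X^\circ$.

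Setting $\wt \phi_\ep := \phi_{{\rm KE}, \ep} - \ep \log h_A$, we obtain metrics on $K_{X/Y} + L$ whose curvature currents satisfy $\geq -\ep \, \om_X$. Fiberwise stability of singular Kähler-Einstein metrics under $L^1$-perturbation of the underlying Monge-Ampère data (in the spirit of Boucksom-Eyssidieux-Guedj-Zeriahi and Di Nezza-Guedj) guarantees that $\wt \phi_\ep|_{\Xy}$ converges in $L^1$ and in capacity to $\phikey$ for every $y \in Y^\circ \cap Y_1$. The upper semicontinuous regularization $\phi_{\rm KE} := (\limsup_{\ep \to 0} \wt \phi_\ep)^*$ is then psh on $X^\circ$ and agrees fiberwise with $\phikey$, yielding the first part of Conjecture \ref{conjecture}. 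The canonical extension across $X \sm X^\circ$ follows from the classical extension theorem for psh functions once a local upper bound near the thin set is available; such a bound is supplied by Ohsawa-Takegoshi $L^2$-extension applied to pluricanonical sections of (multiples of) $K_{X/Y} + L$. The principal technical difficulty is to upgrade the fiberwise convergence $\wt \phi_\ep|_{\Xy} \to \phikey$ into a control that preserves plurisubharmonicity in the total space as $\ep \to 0$: this demands uniform-in-$y$ estimates on the regularized KE potentials, which is ultimately where the vanishing of Lelong numbers enters most essentially, via the smoothness of $\vp_{L, \ep}$ on $X_1$.
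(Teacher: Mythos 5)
Your first reduction --- using Demailly regularization together with the vanishing Lelong number hypothesis to replace $h_L$ by smooth metrics $h_{L_\ep}$ on $L+\ep A$ with semipositive curvature, and then passing to the limit via stability of the Monge--Amp\`ere equation --- is essentially the paper's own first step (Proposition~\ref{conver} and Corollary~\ref{reduction}). The extension across $X\ssm X°$ via an Ohsawa--Takegoshi bound also matches Proposition~\ref{extension}.

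However, there is a genuine gap at the central step. You assert that once the twist is smooth and semipositive, "the corresponding instance of Conjecture~\ref{conjecture} is accessible via Berndtsson-type direct image positivity combined with the Schumacher--P\u{a}un maximum principle applied to fiberwise KE potentials." This is precisely the hard core of the theorem, and it is \emph{not} covered by those earlier results: after smoothing $h_L$, the adjoint class $K_{X_y}+L_\ep|_{X_y}$ is still only \emph{big}, so the fiberwise K\"ahler--Einstein current has singularities along the negative part $E$ of the Zariski decomposition $K_X+L=A+E$. The Schumacher--P\u{a}un maximum principle requires (relative) ampleness of the adjoint class --- it breaks down in the presence of base points, since the KE potentials are unbounded along $E$ and the elliptic estimates underlying the maximum principle fail there; the paper states explicitly that neither of the previously known approaches applies in this situation. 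What the paper actually does to handle the smooth-twist, big case occupies Sections~\ref{sec:approx}--\ref{sec:conv}: it takes a relative Zariski decomposition, perturbs the semiample part $A$ to ample classes $A_\delta$, runs a Ricci-type iteration $(p\om_A+\ddc\vp_m)^n=|s_E|^2e^{\vp_m-\frac{p-1}{p}\vp_{m-1}-f_L}\om^n$ to reduce to integral twists, and then proves (Proposition~\ref{keyprop1}, Theorem~\ref{mainpropsubsect16}) that each iterate is the limit of renormalized \emph{iterated Bergman kernels}; positivity in the family then comes from the Berndtsson--P\u{a}un theorem applied to each Bergman kernel metric, not from any maximum principle. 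Without an argument of this kind (or a genuine substitute), your proof assumes the conclusion in the key case.
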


For example, if $L=0$ then Theorem \ref{thmaint} shows that the metric on $K_{X/Y}$ induced by the fiber-wise KE current is positively curved. 
\medskip

One of the main motivations for Conjecture \ref{conjecture} will become clear from the context we next discuss. Let $p:X\to Y$ be a K\"ahler fiber space, and let $B$ be an effective $\Q$-divisor on $X$ with coefficients in $(0,1)$ such that $B|_{X_y}$ has simple normal crossings support for $y$ generic. Assume furthermore that $K_{X_y}+B|_{X_y}$ has \textit{positive} Kodaira dimension. Here we use the notation ``$B$'' rather than
$L$ in order to emphasize that the metric is fixed.

There exists a relative version of the so-called \emph{canonical metric} introduced by Song and Tian \cite{ST12} and generalized by Eyssidieux, Guedj and Zeriahi \cite{EGZ16}. It is defined on the base $Z'$ of a birational model $q':X'\rightarrow Z'$ of the relative Iitaka fibration $q:X\dashrightarrow Z$ over $Y$, cf. Section~\ref{ssec:intkod} for more details. In case of a family $p$
whose generic fiber has maximal Kodaira dimension, the metric in \cite{ST12} coincides with the singular K\"ahler-Einstein metric (up to a birational transformation).

\begin{bigthm}
\label{corb}
Let $p:X\to Y$ be a K\"ahler fiber space such that for $y$ generic, $\kappa (K_{X_y}+B_y) >0$. 
Let $f: X\dashrightarrow Z$ be the relative Iitaka fibration of $K_{X/Y}+B$, and let $f':X'\rightarrow Z'$ a birational model of $f$ such that $X'$ and $Z'$ are smooth.
\begin{center}
\begin{tikzcd}
X'\arrow{d} \arrow{rr}{f'} & & Z' \arrow{d}\\
X\arrow[dashed]{rr}{f}\arrow[swap]{rd}{p} & & Z \arrow{dl}\\
& Y &
\end{tikzcd}
\end{center}
Let $\omega_{{\rm can}, y}$ be the canonical metric on $Z'_y$ of the pair $(X'_y,B'_y)$; it induces a current $\omcan°$ over the smooth locus of $Z'\to Y$. 

\noindent
Moreover we assume that Conjecture \ref{conjecture} holds true. Then
the current $\omcan°$ is positive and extends canonically to a closed positive current on $Z'$.
\end{bigthm}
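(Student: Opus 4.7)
The plan is to reduce Theorem \ref{corb} to the (assumed) Conjecture \ref{conjecture} applied not to $p$ but to the family $g': Z' \to Y$ of Iitaka bases, with an appropriately chosen twisting line bundle on $Z'$ whose curvature current encodes both the discriminant of $f'$ and the Hodge-theoretic moduli contribution.

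\medskip

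First I would run the relative Kawamata canonical bundle formula along $f': X' \to Z'$, keeping track of the parameter $y \in Y$. After possibly further blowing up $X'$ and $Z'$ (which does not affect the statement since the canonical extension is a birational invariant), I would obtain a $\mathbb{Q}$-linear equivalence
\[
K_{X'/Y} + B' \sim_{\mathbb{Q}} f'^{\,*}(K_{Z'/Y} + \D + J),
\]
where $\D$ is an effective $\mathbb{Q}$-divisor on $Z'$ with SNC support and coefficients in $(0,1)$ (the discriminant of $f'$) and $J$ is a $\mathbb{Q}$-line bundle on $Z'$ endowed with a canonical positively curved singular metric $h_J$ coming from the $L^2$/Hodge metric on the Hodge bundle $f'_\star\bigl(m(K_{X'/Z'}+B')\bigr)$ for $m$ sufficiently divisible, in the spirit of Kawamata's semipositivity theorem and its analytic refinements by Berndtsson--Păun. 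Setting $L := \D + J$ with the product metric $h_L := h_\D \otimes h_J$, we have $\Theta_{h_L}(L) \geq 0$, and the SNC+smooth hypothesis of Theorem~\ref{thmaaa} is already built into the divisorial part.

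\medskip

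Next I would check that this setup fits the framework of Conjecture \ref{conjecture} for $g': Z' \to Y$. The generic fiber $Z'_y$ is a birational model of the Iitaka variety of $(X'_y, B'_y)$; by the adjunction $K_{Z'_y} + \D_y + J_y$ the pair becomes of log general type, so $K_{Z'/Y} + L$ is $g'$-big and the twisted Kähler-Einstein equation
\[
\Ric \omkey = -\,\omkey + \Theta_{h_L}(L)\big|_{Z'_y}
\]
admits a unique solution in $c_1(K_{Z'_y}+L_y)$ for $y$ general. By the main result of Song--Tian \cite{ST12} and its singular extension by Eyssidieux--Guedj--Zeriahi \cite{EGZ16}, this solution coincides with the canonical metric $\omega_{{\rm can},y}$ on $Z'_y$ of the pair $(X'_y, B'_y)$. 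Hence $\omcan°$ is, tautologically, the family of fiberwise twisted Kähler-Einstein currents for the pair $(g', L, h_L)$.

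\medskip

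With this identification in hand, the conclusion follows by invoking Conjecture \ref{conjecture} applied to $g': Z' \to Y$ with the twisting $(L,h_L)$: there exists a positively curved metric $e^{-\phikey}$ on $K_{Z'°/Y°}+L|_{Z'°}$ whose fiberwise restriction is the twisted KE metric on $Z'_y$, i.e.\ $\omega_{{\rm can},y}$; its curvature current is therefore a closed positive current on $Z'°$ coinciding with $\omcan°$. The canonical-extension clause of Conjecture \ref{conjecture} then provides the required extension to a closed positive current on the whole of $Z'$.

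\medskip

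The main obstacle, beyond assuming Conjecture \ref{conjecture}, is the construction of the pair $(L,h_L)$ above with the required positivity of $h_L$ in the relative setting: one must execute the canonical bundle formula not just numerically but together with a genuine singular Hermitian metric of positive curvature, and ensure that on a generic fiber $Z'_y$ the restriction recovers the divisor--plus--Hodge decomposition used in Song--Tian--EGZ. This is where the positivity of relative pluricanonical direct images (à la Berndtsson--Păun) enters crucially, both to produce $h_J$ and to compare it, fiberwise, with the Weil--Petersson type current that appears in the $\omega_{{\rm can},y}$ equation. Once this identification is secured, the rest of the argument is essentially formal, as it simply transports the statement of Conjecture \ref{conjecture} from $(Z'\to Y, L)$ to $\omega_{{\rm can}}$.
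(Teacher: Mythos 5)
Your overall architecture is the same as the paper's: apply the Conjecture not to $p$ but to the family $Z'\to Y$ of Iitaka bases, with a twisting $\Q$-line bundle whose fiberwise twisted K\"ahler--Einstein current is identified with $\omega_{{\rm can},y}$, and then let the canonical-extension clause do the rest. The difference is the choice of twisting datum: the paper takes $L:=\frac 1m f'_\star\bigl(m(K_{X'/Z'}+B')\bigr)^{\star\star}$ with the Narasimhan--Simha metric (positivity from Berndtsson--P\u{a}un), whereas you take the discriminant-plus-moduli decomposition $\D+J$ from the canonical bundle formula with a Hodge-type metric on $J$. The NS choice is not cosmetic: the NS weight is by construction the fiber integral $\int_{X'_z}|\tau|^{2/m}e^{-\phi_{B'}}$, which is (up to an explicit exceptional divisor) exactly the density $f_*\mu_{h_A,h_E}$ appearing in the Song--Tian/EGZ equation, so the comparison with $\omega_{{\rm can},y}$ (Proposition~\ref{sameke}) becomes a direct computation rather than an appeal to a structural theorem.

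Two concrete gaps remain in your version. First, the relation $K_{X'/Y}+B'\sim_{\Q}f'^{*}(K_{Z'/Y}+\D+J)$ is not correct as stated: the generic fiber of the Iitaka fibration has $\kappa=0$ but $K_{X'_z}+B'_z$ is not $\Q$-linearly trivial, so the Fujino--Mori formula leaves an effective fixed part on $X'$ and correction divisors supported over codimension~$\geq 2$ loci of $Z'$ (the $E_{\pm}$ of \eqref{equalitydirect}). These terms are precisely what threatens the integrability hypothesis \ref{nef} of Definition-Proposition~\ref{KElog} (equivalently \ref{nefcon} of Setting~\ref{set}), without which the fiberwise KE current you want to feed into the Conjecture is not even defined; the paper spends Proposition~\ref{integrali} and Viehweg's flattening trick on exactly this point, and you do not address it. Second, the identification with $\omega_{{\rm can},y}$ is not tautological: $\omcan$ lives in the class of the semiample part $\mu^{*}A_y$ on (a model of) $Z$, while the KE current of Definition~\ref{KElog} lives in $c_1(K_{Z'_y}+L_y)$; they differ by the integration current along a $\mu$-exceptional divisor $E_{Z'}$ (Proposition~\ref{sameke}). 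One must track this discrepancy in the relative setting to pass from positivity and extension of $e^{-\phi_{\rm KE}}$ to positivity and extension of $\omcan°$ itself; you acknowledge this circle of issues as the ``main obstacle'' but the argument as written does not close it.
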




\subsection*{Previously known results} There are basically two types of techniques used in order to address the questions we are interested in
here, due to Schumacher and Tsuji in \cite{Schum08} and \cite{Tsuji10}, respectively. The former concerns the smooth case (e.g. $K_{X_y}$ ample) and it is based on a maximum principle. The later consists in 
showing that non-singular KE metrics can be obtained by an iteration scheme involving pluricanonical sections normalized in a specific way. Both methods have their advantages and flaws. For example, it is difficult to conceive that
Schumacher method can be used in the presence of base points. Also, at first sight the
method of Tsuji looks very general. However, it uses in an essential manner the
asymptotic expansion of Bergman kernels, which depends on at least two derivatives of the metric. This is the main reason why we cannot deal with
the general case of a line bundle $(L, h_L)$ as in Conjecture \ref{conjecture}.

In the paragraph that follows we recall the definitions of relative (singular) Kähler-Einstein metrics and collect some earlier results.

\subsubsection*{Relative singular Kähler-Einstein metrics}

A singular Kähler-Einstein metric is a generic term to refer to a non-smooth, closed, positive, $(1,1)$-current $\om$ that satisfies a Kähler-Einstein like equation in a weak sense. Among the most natural examples are: Kähler-Einstein with conic singularities, mentioned above, cf. also \cite{Brendle, CGP, JMR}, Kähler-Einstein metrics on singular varieties, cf. \cite{EGZ, BBEGZ, BG}. These metrics are obtained by solving an equation of the form 
\begin{equation}
\label{KE2}
\Ric \om = \lambda \om + T
\end{equation}
on a compact Kähler manifold $X$, where $T$ is a closed $(1,1)$-current (e.g. the current of integration along a $\R$-divisor with coefficients in
$]-\infty, 1]$). The cohomology class $\alpha\in H^{1,1}(X, \mathbb R)$ of $\om$ is determined by the equation unless $\lambda=0$, and it may be degenerate. That is, instead of being Kähler, $\alpha$ may be semipositive and big, or even merely big. The singularities of $\om$ may then appear because of the singularities of $T$ or the non-Kählerness of $\{\om\}$. The singularities of the first type are rather well known when $T$ is a current of integration along an effective divisor with snc support (one gets conic or cusp singularities, cf. e.g. \cite{KobR, Tia, G12}), but they are mostly mysterious in the second case with a few numbers of exceptions like when $X$ is a resolution of singularities of a variety $Y$ with orbifold singularities, or isolated conical singularities cf. \cite{HS}, and $\alpha$ is the pull-back of a Kähler class on $Y$.

\subsubsection*{Earlier results}

If the generic fiber has ample canonical bundle, that is, if $K_{X_y}$ is ample for any $y\in Y°$, then it follows from the Aubin-Yau theorem \cite{Aubin} \cite{Yau78} that one can endow each smooth fiber with a Kähler-Einstein metric with $\lambda=-1$. This induces a metric on $K_{X/Y}|_{X°}$
whose curvature form $\om_{\rm KE}° $ is smooth (by implicit function theorem). Moreover, the restriction $\om_{\rm KE}° |_{X_y}$ coincides with the KE metric. The surprizing important fact is that $\om_{\rm KE}° \geq 0$ on $X°$,
as 
it has been showed by Schumacher \cite{Schum08} and independently by Tsuji \cite{Tsuji10}.

\noindent $\bullet$   
Following Schumacher's strategy, one obtains in \cite{Paun12} a generalization of this result to the Kähler setting (including the extension property) only assuming that $K_{X_y}+\{\beta\}|_{X_y}$ is relatively ample for some smooth, semipositive, closed $(1,1)$-form $\beta$ on $X$.

Based on this approach again, the second name author studied the conic analogue of these questions, cf. \cite{Gue16}: let $B=\sum b_i B_i$ is a divisor with snc support  on $X$ and coefficients in $(0,1)$ and assume that $K_{X_y}+B|_{X_y}$ is ample for $y\in Y°$, the the relative conical Kähler-Einstein metric solution of $\Ric \om_y=-\om_y+[B|_{X_y}]$ induces a singular $(1,1)$ current $\om_{\rm KE}°$ on $X°$ that is positive, and extends canonical to a positive current $\omke \in c_1(K_{X/Y}+B)$.

We refer to \cite{Schum}, \cite{Choi}, \cite{BS}
for other applications of this method.
\smallskip

\noindent  $\bullet$ In the case of a manifold with ample canonical bundle,
Tsuji observes that $\om_{\rm KE}°$ is the limit of relative Bergman kernels whose variation is known to be semipositive cf. \cite{BP}.
The metric induced by fiber-wise Bergman kernels extends, cf. \emph{loc. cit}.
Therefore
$\om_{\rm KE}°$ extends canonically to a current $\omke \in c_1(K_{X/Y})$ on $X$.
See \cite{TsujiNag} for potential applications.
\medskip

\noindent   $\bullet$ In a more general singular case  $K_{X_y}+B_y$ big, the fiberwise Kähler-Einstein metrics pick up singularities that are yet to be understood, and neither of the previous approaches seem to work.\\

\subsection*{About the proof}
The strategy of the proof of Theorem~\ref{thmaint} is explained in detail at the beginning of Section~\ref{proofthm} and consists in realizing the singular Kähler-Einstein metric as a limit of suitably chosen and renormalized Bergman kernels, as those are known to vary in a psh way by \cite{BP}. Although the global scheme of our arguments is similar to \cite{Tsuji10}, the level of difficulties induced by the presence of base points in the problems we are treating here 
is far more severe.  
In order to overcome them, one has to resort to using numerous intricate approximation processes. Ultimately, our feeling is that the room to manoeuvre is so small that Theorem~\ref{thmaint} is probably close to the optimal result that our method can reach, aside from the orbifold case discussed in Section~\ref{orbifold}.

\subsection*{Acknowledgments} We would like to thank Sébastien Boucksom, Tristan Collins, Vincent Guedj, Christian Schnell, Song Sun, Valentino Tosatti and Botong Wang for numerous useful discussions about the topics of this paper. 
This work has been initiated while H.G. was visiting KIAS, and it was carried on during multiple visits to UIC as well as to IMJ-PRG; he is grateful for the excellent working conditions provided by these institutions. During the preparation of this project, the authors had the opportunity to visit FRIAS on several occasions and benefited from an excellent work environment. 

H.G. is partially supported by NSF Grant DMS-1510214, and M.P is partially supported by NSF Grant DMS-1707661 and Marie S. Curie FCFP.

\setcounter{tocdepth}{2}

\section{Pluricanonical sections and singular Kähler-Einstein metrics}

\label{setting}

Let $X$ be a compact Kähler manifold of dimension $n$. 
Let $(L,h_L)$ be a $\Q$-line bundle endowed with a possibly singular hermitian metric 
$h_L=e^{-\phi_L}$ with positive curvature, that is $$\Theta_{h_L}(L)=\ddc \phi_L \ge 0$$ in the sense of currents.

We now recall the definition of K\"ahler-Einstein metric for the pair $(X, L)$ in case $K_X +L$ is big. This definition has been given by \cite[\S~6]{BEGZ} when $L=0$, and can be easily adapted to our slightly more general context.

\begin{defiprop}
\label{KElog}
Let $X$ be a compact Kähler manifold, let $(L,h_L)$ be a $\Q$-line bundle endowed with a singular hermitian metric $h_L=e^{-\phi_L}$ with positive curvature, that is, $\Theta_{h_L}(L)\ge 0$ in the sense of currents. 
We assume moreover that 
\begin{enumerate}
\item \label{dbig}The $\Q$-line bundle $K_X +L$ is big. 
\item \label{fg} The algebra $R(X,L)=\bigoplus_{m\ge 0 } H^0(X,\lfloor m(K_{X} +L) \rfloor)$ is finitely generated.
\item \label{nef} For every $p\in\N$ and every $s\in H^0 (X, p (K_X +L))$, we have $\int_X |s|^{{2}/{p}} e^{-\phi_L} < +\infty$.  
\end{enumerate}
Then, there exists a unique closed, positive $(1,1)$-current $\omke$ on $X$ which satisfies the following conditions.
\begin{enumerate}
\item \label{min}The current $\omke$ belongs to the big cohomology class $c_1(K_X+L)$ and it has full mass, that is, $\int_{X}\la \omke^n \ra = \mathrm{vol}(K_X+L)$. 
\item \label{keeq}The current $\omke$ satisfies the following equation in the weak sense of currents
$$ \Ric \omke = -\omke +\frac{i}{2\pi}\Theta_{h_L} (L).$$
\end{enumerate}
\end{defiprop}

\begin{rema}
\label{rem:defi} {\rm Some remarks are in order. 

$(a)$ An important feature of this definition is that it is birationally invariant. More precisely, if $(X,L,e^{-\phi_L})$ satisfies conditions \ref{dbig}-\ref{nef} and if $\pi:X'\to X$ is any birational proper morphism, then so does $(X',L', e^{-\phi_{L'}})$ where $L':=\pi^*L, \phi_{L'}:=\pi^*\phi_{L}$. Furthermore, if $\omke'$ is the Kähler-Einstein metric of $(X',L', e^{-\phi_{L'}})$, then $\omke'= \pi^*\omke+[K_{X'/X}]$. \\
   
$(b)$  
Conditions \ref{fg} and \ref{nef} are automatically satisfied if the multiplier ideal sheaf of $h_L$ is trivial, that is, if $\mathcal I(h_L)=\mathcal O_X$. This is clear for \ref{nef}.  As for \ref{fg}, we use the following argument. As $K_X +L$ is big, $X$ is automatically projective and we have 
$$K_X +L\equiv_{\Q} A+E$$ 
for some ample $\Q$-line bundle $A$ and an effective $\Q$-divisor $E$. 
From the solution of the openness conjecture, cf. \cite{BoB15, GuanZhou15}, for some $m_0$ large enough, 
we have
\begin{equation}\label{opennes}
\mathcal I(e^{-\phi_L- \frac{1}{m_0} \phi_E})=\mathcal O_X .
\end{equation}
As the question is birationally invariant,  by Demailly's regularization theorem, after some birational morphism,
we can suppose that
$$L+\frac{1}{m_0} A \equiv_{\Q} B + H$$ 
for some effective $\Q$-divisor $B$ and a semi-ample $\Q$-line bundle $H$ such that $\phi_L$ is more singular than $\phi_B$, where $\phi_B$ a canonical singular weight attached to $B$, cf proof of Lemma~\ref{klt} for instance.  
Together with \eqref{opennes}, $B+\frac{1}{m_0} E$ is klt. 
Thanks to \cite{BCHM}, the canonical ring of $K_X+ (B+\frac{1}{m_0} E)+ H$ is finitely generated.  Combining this with the relation 
$$(1+\frac{1}{m_0} )(K_X+L) \sim_{\Q} K_X+ (B+\frac{1}{m_0} E)+ H ,$$  
the condition \ref{fg} is proved.\\

$(c)$ If $L$ corresponds to an effective, klt $\Q$-divisor $B$ and $\phi_L$ is the canonical singular weight on $B$, then one recovers the standard log Kähler-Einstein metric whose existence follows essentially from \cite{BEGZ}, cf. e.g. \cite[\S~2.3]{G2}. \\

$(d)$  The condition \ref{keeq} can be rewritten in terms of non-pluripolar Monge-Ampère equations as follows: 
$$\la (\ddc \phi_{\rm KE})^n \ra  = e^{\phi_{\rm KE}-\phi_L}$$
where $\phi_{\rm KE}$ is a local weight for $\omke$ and where $\la \cdotp^n \ra $ denotes the non-pluripolar Monge-Ampère operator, cf. \cite[Def~1.1 \& Prop~1.6]{BEGZ}. \\

$(e)$ We will see in the proof that $\omke$ has minimal singularities in the sense of \cite[Def~1.4]{DPS01}. Moreover,  if $h_L$ is smooth on a non-empty Zariski open subset of $X$, then one can prove that $\omke$ is smooth on a Zariski open set by reducing the problem to the semiample and big case and use \cite{EGZ}. 
To our knowledge, there is still no purely analytical proof of the generic smoothness as explained in the few lines following \cite[Thm.~C]{BEGZ}.

}
\end{rema}

\begin{proof}[Proof of Definition-Proposition~\ref{KElog}]

Set $R(X,L):=\bigoplus_{m\geq 0} H^0 (X, \lfloor m (K_X+L)\rfloor)$, and let us define $X_{lc}:=\mathrm{Proj} \, R(X,L)$ to be the log canonical model of $X$, cf. e.g. \cite[Def.~3.6.7]{BCHM}. Taking a desingularization of the graph of the natural birational map $f:X\dashrightarrow X_{lc}$, one get the following diagram
\begin{center}
\begin{tikzcd}
 & \arrow{ld}[swap]{\mu} \widetilde{X} \arrow{rd}{\nu} & \\
X\arrow[dashed]{rr}{f} & &X_{lc} 
\end{tikzcd}
\end{center}
and the following formula
\begin{equation}
\label{zard}
\mu^*(K_X+L)= \nu^*(K_{X_{lc}}+L_{lc})+F
\end{equation}
where $L_{lc}:=f_*L$ (recall that $f$ does not contract any divisor) and $F$ is an effective $\nu$-exceptional divisor. Clearly, $K_{X_{lc}}+L_{lc}$ is ample. Thus, setting $\wt L:=\mu^*L, A:=\nu^*(K_{X_{lc}}+L_{lc})$ and $E:=F+K_{\widetilde X/X}$, the decomposition 
\begin{equation*}
\label{zari}
K_{\widetilde X}+\wt L=A+E
\end{equation*}
is a Zariski decomposition of $K_{\widetilde X}+\wt L$, and we have $\int_{\widetilde X}|s|^{\frac 2p}e^{-\phi_{\wt L}}<+\infty$ for any section $s\in H^0(\widetilde X, p(K_{\widetilde X}+\wt L))$ and where $\phi_{\wt L}:=\mu^*\phi_L$. We want to solve the following equation
\begin{equation}
\label{eqxtilde}
  (\ddc \wt \phi)^n   = e^{\wt \phi + \phi_E-\phi_{\wt L}} 
\end{equation}
for $\wt \phi$ a bounded psh weight on $A$, where $\phi_E$ is the canonical singular weight attached to $E$. Thanks to the results in \cite[\S~2.3]{G2}, we are reduced to establishing the following property. 
\begin{equation}
\label{kol}
e^{\phi_E- \phi_{\wt L}}\in L^{1+\ep} \quad \mbox{for some} \,\,\, \ep>0.
\end{equation} 

Take $p$ large enough so that $p\wt L,pE$ are integral and $|pA|$ is basepoint free. 
Let $\{\tau_1 ,\cdots \tau_r\}$ be a basis of $H^0 (\wt X, pA)$. Then $\sum_{i=1}^r |\tau_i|^{2}$ is non-vanishing everywhere.
Let $s_{pE}$ be the canonical section of $pE$.
Thanks to \ref{nef}, we have
$$\int_{\wt X} (\sum_{i=1}^r |\tau_i|^{2})^{\frac{1}{p}}  | s_{pE}|^{\frac{2}{p}} e^{-\phi_{\wt L} } < +\infty .$$
Together with the fact that $\sum_{i=1}^r |\tau_i|^{2}$ is non vanishing everywhere, we get
\begin{equation}
\label{kol2}
e^{-\phi_{\wt L} + \phi_E } \in L^1_{\rm loc}
\end{equation} 

By applying the solution of the generalized openness conjecture \cite{GuanZhou15, BoB15} to the psh weight $\phi_{\wt L}+(1-\frac 1p) \phi_{pE}$,  we see that \eqref{kol} follows from \eqref{kol2}.

Now, define $\phi:=\wt \phi+\phi_F$. From the Zariski decomposition \eqref{zard}, it follows that the psh weight $\phi$ on $\mu^*(K_{X}+ L)$ has minimal singularities and it satisfies $\la (\ddc \phi)^n \ra =\la (\ddc \wt \phi)^n\ra = e^{\phi-\phi_{\wt L}}$ as the operator $\la \cdotp^n \ra $ puts no mass on proper analytic sets. There exists a unique psh weight $\phi_{\rm KE}$ on $K_X+L$ such that $\phi=\mu^*\phi_{KE}$. It has automatically minimal singularities and satisfies $\la (\ddc  \phi_{\rm KE})^n\ra = e^{\phi_{\rm KE}-\phi_{ L}}$; this ends the proof. 
\end{proof}

It will be convenient to use the following setting. 
\begin{set}
\label{set}
Let $p: X \rightarrow Y$ be a projective fibration between two smooth Kähler manifolds. Let $(L,h_L)$ be a holomorphic singular hermitian $\Q$-line bundle on $X$ such that $i\Theta_{h_L} (L) \geq 0$. Let $Y°\subset Y$ be a Zariski open subset such that where $p$ is smooth over $Y°$, and  set $X° :=p^{-1} (Y°)$. Assume that the additional two conditions are satisfied.
\begin{enumerate}
\item \label{pbig}The $\Q$-line bundle $K_{X} +L$ is $p$-big and for every $y\in Y°$, the algebra 
$$R(X_y ,L)=\bigoplus_{m\ge 0 } H^0(X_y ,\lfloor m(K_{X_y} +L_y) \rfloor)$$ 
is finitely generated.
\item\label{nefcon} Let $y\in Y°$.  For every $m\in\N$ and every $s\in H^0 (X_y , m (K_X +L))$, we have $\int_{X_y} |s|^{\frac{2}{m}} _{h_L} < +\infty$.  
\end{enumerate}
\end{set}
\medskip

\noindent We can now state the precise form of the conjecture already mentioned in
the introduction.
\begin{conj}
\label{conj}
In the Setting~\ref{set} above, the K\"ahler-Einstein metrics $(\om_{{\rm KE},y})_{y\in Y°}$ on the smooth fibers in the sense of Definition-Proposition \ref{KElog} induce a metric $e^{-\phi_{\rm KE}}$ on $K_{X/Y} +L$ over $X°$ such that: 
\begin{enumerate}
\item \label{main11} $i\Theta_{\phi_{\rm KE}} (K_{X/Y} +L) \geq 0$ on $X°$. 
\item \label{main12} The metric $e^{-\phi_{\rm KE}}$ extends canonically across $X\ssm X°$ and $i\Theta_{\phi_{\rm KE}} (K_{X/Y} +L) \geq 0$ on $X$.
\end{enumerate}
\end{conj}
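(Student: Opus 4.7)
The plan is to realize the fiberwise singular Kähler-Einstein potential $\phikey$ as the renormalized limit of a relative Bergman kernel construction, so that the semipositivity claim (\ref{main11}) is inherited from the Berndtsson--Păun theorem on variation of twisted Bergman kernels \cite{BP}. The global scheme mimics \cite{Tsuji10}, but with the additional twisting by the singular metric $h_L$.

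\emph{Step 1.} For $m$ large and divisible enough that $m(K_{X/Y}+L)$ is a genuine line bundle, equip the fiberwise space $H^0(X_y,m(K_{X_y}+L_y))$ with the $L^{2/m}$-type norm
$$ \|s\|^2_{m,y} \;=\; \Bigl(\int_{X_y}|s|^{2/m}_{h_L}\Bigr)^{m}, $$
which is finite and non-degenerate by conditions (\ref{pbig}) and (\ref{nefcon}). The associated relative Bergman kernel defines, after taking the $m$-th root, a singular weight $\vp_m$ on $K_{X/Y}+L$ over $X°$. Since the base Hilbert norm is of pluricanonical type twisted by the psh weight $\phi_L$, \cite{BP} and its subsequent refinements imply that $\vp_m$ has positive curvature current on $X°$.

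\emph{Step 2.} Following a Tsuji-type iteration, set $\vp^{(0)}:=\vp_m$ and define recursively $\vp^{(k+1)}$ as (the $m$-th root of) the Bergman kernel of $m(K_{X_y}+L_y)$ with respect to the fiberwise volume form $e^{-(m-1)\vp^{(k)}-\phi_L}$. Fiberwise, Demailly's asymptotic expansion of Bergman kernels combined with the uniqueness part of Definition-Proposition~\ref{KElog} shows that, along a suitable diagonal sequence in $(m,k)$, the iterates converge to $\phikey$. On the total space, Berndtsson--Păun semipositivity is preserved at every step, hence the upper semi-continuous regularization
$$ \phi_{\rm KE} \;:=\; \bigl(\limsup\, \vp^{(k)}\bigr)^{*} $$
is a positively curved weight on $K_{X°/Y°}+L|_{X°}$ whose fiberwise restriction equals $\phikey$. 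This establishes (\ref{main11}).

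\emph{Step 3.} For the canonical extension across $X\ssm X°$ demanded by (\ref{main12}), the key input is that $\phikey$ has minimal singularities in its big class, cf.\ Remark~\ref{rem:defi}(e). Writing $K_{X/Y}+L$ as $p$-big in the form $A+E$ with $A$ relatively ample and $E$ effective, one obtains a locally uniform upper bound $\phikey \le \phi_{\rm ref}+C$ in terms of a fixed smooth reference weight $\phi_{\rm ref}$, the constant $C$ being independent of $y$ in a neighborhood of any point of $Y$. This local boundedness from above ensures that the formula
$$ \phi_{\rm KE}(x)\;:=\;\Bigl(\limsup_{x'\to x,\,x'\in X°}\phi_{\rm KE}(x')\Bigr)^* $$
defines a globally psh extension on $K_{X/Y}+L$ over all of $X$.

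\emph{Main obstacle.} The hardest step is Step~2. The convergence of the Tsuji iteration in the presence of a genuinely singular twist $h_L$ requires uniform Ohsawa--Takegoshi-type extensions across $Y°$ together with uniform Bergman kernel asymptotics for the multiplier-ideal sheaves $\mathcal I(m\phi_L|_{X_y})$. These ideals may jump erratically with $y$ when $\Theta_{h_L}(L)$ carries positive Lelong numbers on divisors meeting the generic fiber, which is precisely why Theorem~\ref{thmaint} imposes the vanishing of Lelong numbers on $p^{-1}$ of a Zariski open subset: this hypothesis restores the uniform control needed for the fiberwise Bergman expansion to be uniform in $y$. Removing that hypothesis would likely require coupling a Demailly--Kołłar regularization of $h_L$ with an a priori $C^0$ estimate on $\vp^{(k)}$ that is robust under the regularization, which seems out of reach of the current technology.
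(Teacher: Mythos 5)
The first thing to say is that the statement you are proving is stated in the paper as a \emph{conjecture}: the authors explicitly write that they are unable to prove Conjecture~\ref{conj} in full generality, and only establish it when the Lelong numbers of $\phi_L$ vanish (Theorem~\ref{thmaa}), after first reducing Conjecture~\ref{conj} to Conjecture~\ref{conj2} (Theorem~\ref{thmaaa}). Your ``Main obstacle'' paragraph honestly concedes exactly this, so what you have written is not a proof of the statement but a strategy outline for the accessible case --- and that outline does match the paper's approach: iterated, renormalized Bergman kernels whose semipositivity comes from \cite{BP}, plus an Ohsawa--Takegoshi argument for the extension across $X\ssm X°$ (your Step~3 is essentially Proposition~\ref{extension}).

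Even restricted to the case the paper actually handles, Step~2 has genuine gaps. You invoke ``Demailly's asymptotic expansion of Bergman kernels'' fiberwise, but the class $c_1(K_{X_y}+L_y)$ is only big: after the reduction to a relative Zariski decomposition $K_X+L=A+E$ (Lemma~\ref{zar}), every pluricanonical section vanishes along $E$, the Bergman kernels are singular there, the semiample part $A$ need not be ample, and the KE potential is merely bounded --- so no off-the-shelf expansion applies. The paper has to (i) perturb $A$ to ample classes $A_\delta$ and regularize $|s_E|^2$ to $|s_E|^2+\ep^2$ with quantitative convergence (Proposition~\ref{convv}), (ii) set up a Ricci iteration $\vp_m$ converging geometrically to the KE potential (Proposition~\ref{convricci}), and (iii) for each fixed $m$ prove the Bergman lower bound by an explicit peak-section construction and a H\"ormander $\dbar$-estimate (Proposition~\ref{keyprop1}), matched against an integral upper bound (Proposition~\ref{prop:int}); only then does the double limit in $(\ell,m)$ make sense. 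Your sketch also skips the reductions to $h_L$ smooth and to $c_1(L)$ integral (steps (a)--(c) of Section~\ref{proofthm}), which are needed before \cite{BP} can be applied at each stage of the iteration. As written, the proposal asserts the conclusion of these steps rather than proving them, and of course the full conjecture remains open.
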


The above conjecture is very general as it deals with a wide range of singular hermitian bundles $(L,h_L)$. A version of this is the following.

\begin{conj}
\label{conj2}
Let $p: X \rightarrow Y$ be a Kähler fiber space. Let $(L,h_L)$ be a holomorphic hermitian $\Q$-line bundle on $X$ such that 
\begin{enumerate}
\item $(L, h_{L}) =(B+\Lambda, h_{B}h_{\Lambda})$  where $B$ is an effective $\Q$-divisor and the restriction
on the generic fiber $B|_{X_y}$ is klt with simple normal crossings support, $h_B$ is the canonical singular metric on $B$ and $\Lambda$ is a $\Q$-line bundle with a smooth hermitian metric $h_{\Lambda}$ such that $\Theta_{h_{\Lambda}} (\Lambda) \geq 0$ on $X$.
\item The $\Q$-line bundle $K_{X}+L$ is $p$-big and admits a relative Zariski decomposition, i.e.  $K_X+L \equiv_\mathbb{Q} A+E$ for some relatively semiample and big $\mathbb{Q}$-line bundle $A$ and an effective $\mathbb{Q}$-divisor $E$ such that the natural map 
$$p^*p_*\mathcal O_X(mA) \longrightarrow p^*p_*\mathcal O_X(mA+mE)$$ 
is a sheaf isomorphism over $Y°$ for any $m$ divisible enough.
\end{enumerate}
Then, the relative Kähler-Einstein metric $e^{-\phi_{\rm KE}}$ induced on $K_{X/Y} +L$ over $X°$ satisfies $$\Theta_{\phi_{\rm KE}} (K_{X/Y} +L) \geq 0 \quad \mbox{on} \quad X°.$$ 
\end{conj}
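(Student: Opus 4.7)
The plan is to adapt the Bergman-kernel approximation that underlies Theorem~\ref{thmaint} by exploiting the extra structure provided by the relative Zariski decomposition in hypothesis~(2). The key observation is that the sheaf isomorphism
$$p^*p_*\O_X(mA) \xrightarrow{\ \sim\ } p^*p_*\O_X\bigl(m(A+E)\bigr)$$
over $Y°$ allows one to write every fibrewise pluricanonical section of $m(K_{X/Y}+L)$ as $s_E^m\cdot\sigma$ with $\sigma$ a section of the relatively semiample $\Q$-bundle $mA$, thereby replacing the analytically problematic sections of $K_{X/Y}+L$ by geometrically tractable ones. After passing, via the birational invariance recorded in Remark~\ref{rem:defi}(a), to a log resolution on which $A$ is $p$-semiample and $B+E$ has relative snc support, I would define for each $y\in Y°$ and $m\gg 0$ divisible the fibrewise Bergman kernel
$$B_{m,y}(x) := \sup\Bigl\{\, |s(x)|^{2/m} \ : \ s\in H^0\bigl(X_y, m(K_{X_y}+L|_{X_y})\bigr),\ \int_{X_y}|s|^{2/m}_{h_L}\le 1\,\Bigr\},$$
whose finite-dimensionality and integrability are guaranteed by conditions~\ref{pbig} and \ref{nefcon} of Setting~\ref{set}, both available under the klt assumption on $B|_{X_y}$ and the smoothness of $h_\Lambda$.

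\textbf{Positivity and convergence.} The first substantial step is to prove that $\phi_m(x,y):=\log B_{m,y}(x)$ is a psh weight on $m(K_{X°/Y°}+L)$ with positive curvature current. This should follow from Berndtsson-P\u{a}un's theorem on psh variation of relative Bergman kernels, in combination with a fibrewise Ohsawa-Takegoshi extension yielding uniform $L^{2/m}$-estimates weighted by $h_L$; the essential role of the Zariski decomposition hypothesis is that it routes this input through the pair $(A, h_A)$ for a smooth semipositive reference $h_A$, avoiding the pathologies of the base locus of $K_{X/Y}+L$ itself. The next step is to establish the convergence $\tfrac{1}{m}\phi_m \to \phi_{\rm KE}$ as $m\to\infty$, locally uniformly on $X°$ away from $\Supp B \cup \Supp E$ and the base locus of $A$. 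Following Tsuji's iteration, one verifies that $\tfrac{1}{m}\phi_m$ satisfies an approximate fibrewise Kähler-Einstein equation with error $O(m^{-1}\log m)$; the uniqueness part of Definition-Proposition~\ref{KElog}, together with pluripotential-theoretic estimates guaranteeing minimal singularities along the lines of Remark~\ref{rem:defi}(e), would identify the limit fibrewise with $\phi_{{\rm KE},y}$. The upper-semicontinuous regularization of $\limsup_m \tfrac{1}{m}\phi_m$ then yields a positively curved metric on $K_{X°/Y°}+L$ restricting to $\omke$ on each fibre, which establishes the conjecture.

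\textbf{Main obstacle.} The delicate step is the convergence step above. In the classical Tian-Zelditch-Tsuji framework the upper bound $\tfrac{1}{m}\phi_m\le\phi_{\rm KE}+O(m^{-1}\log m)$ relies on smoothness of the reference metric and on $L^\infty$ control of the extremal Bergman sections along the base locus, whereas here the poles of $h_B$ along $B$, the contribution of the exceptional divisor $E$, and the base points of $A$ each obstruct such uniform control. The relative Zariski decomposition combined with the openness theorem \cite{BoB15, GuanZhou15} should reduce the required bounds to $L^{1+\ep}$-estimates in the spirit of Remark~\ref{rem:defi}(b), but propagating these estimates uniformly in $m$ while maintaining compatibility with the Berndtsson-P\u{a}un positivity of the first step is the technical heart of the argument, and is precisely what obstructs the extension of Theorem~\ref{thmaint} beyond the zero-Lelong-number hypothesis.
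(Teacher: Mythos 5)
You should first be clear that the statement you are proving is a \emph{conjecture} in the paper: the authors explicitly state that they cannot prove Conjecture~\ref{conj2} (equivalently, by Theorem~\ref{thmaaa}, Conjecture~\ref{conj}) in full generality, but only when the Lelong numbers of $h_L$ vanish (Theorem~\ref{thmaa}). Since $h_B$ is the canonical singular metric on an effective divisor $B$, it has strictly positive Lelong numbers along $\Supp B$ whenever $B\neq 0$, so the case you are claiming is precisely the one left open. Your own closing paragraph concedes this: you identify the convergence/peak-section step as "precisely what obstructs the extension of Theorem~\ref{thmaint} beyond the zero-Lelong-number hypothesis," which means the proposal is a strategy outline with an acknowledged hole, not a proof. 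Concretely, the gap sits in two places. First, the lower bound on the Bergman kernel (the analogue of Proposition~\ref{keyprop1} and Claim~\ref{cla}) is obtained by constructing a peak section from the second-order Taylor expansion of a \emph{smooth, strictly psh} reference weight $\phi_{m,\ep}$ and then solving a $\bar\partial$-equation with H\"ormander estimates; when $h_B$ has poles along $B$, no such smooth reference weight with uniform two-sided Hessian bounds exists near $\Supp B$, and the Gaussian computation in Claim~\ref{cla} (which needs $C_\ep$ commensurable to $\sup_X(|\om_{m,\ep}|_\om+|\nabla^\om\om_{m,\ep}|)$) breaks down. Regularizing $h_B$ does not help: the reduction of Section~\ref{sec:reduction} to smooth $h_L$ genuinely uses that the Lelong numbers vanish, so that $e^{-\vp_{L,\ep}}\uparrow e^{-\vp_L}$ in every $L^p$; for a klt $B$ this fails and the limiting argument identifying the KE metric collapses. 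The only positive-Lelong-number situation the paper can handle is the orbifold case of Section~\ref{orbifold}, and there only because of the Ross--Thomas expansion under an arithmetic coprimality hypothesis on the multiplicities.

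Second, even setting aside the singularities of $h_B$, your convergence step is not the paper's and, as written, does not work. You propose a single fibrewise kernel $B_{m,y}$ on $m(K_{X_y}+L)$ and assert $\tfrac1m\log B_{m,y}\to\phi_{{\rm KE},y}$ via an "approximate KE equation with error $O(m^{-1}\log m)$." The paper explicitly explains (step (c) of the plan in Section~\ref{proofthm}) that this one-step scheme cannot produce the KE metric, because the right-hand side of the KE equation involves the unknown weight itself; instead one needs a double iteration: a Ricci iteration in $m$ (Proposition~\ref{convricci}), and for each fixed $m$ an iterated Bergman-kernel construction in $\ell$ with the fixed twist $\tau_m$ (Theorem~\ref{mainpropsubsect16}), whose upper bound comes from the integral estimate of Proposition~\ref{prop:int} rather than from pointwise Tian--Zelditch asymptotics. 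So the proposal both replaces the paper's two-level scheme by an unsubstantiated one-level scheme and leaves unproved exactly the estimate that makes the conjecture open.
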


\bigskip

\noindent
 Our first result, proved in Section~\ref{proof1}, is to reduce Conjecture~\ref{conj} to Conjecture~\ref{conj2}: 

\begin{theo}
\label{thmaaa}
Conjecture~\ref{conj2} implies Conjecture~\ref{conj}. 
\end{theo}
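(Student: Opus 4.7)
The plan is to approximate a general singular positively curved metric $h_L$ on $L$ by metrics of the form required by Conjecture~\ref{conj2} and to pass to the limit in the resulting fiberwise Kähler-Einstein equations, exploiting the birational invariance of Definition-Proposition~\ref{KElog} recorded in Remark~\ref{rem:defi}(a). The natural tool is Demailly's regularization theorem: it produces a sequence of psh weights $\phi_{L,m}$ with analytic singularities decreasing to $\phi_L$, with curvature bounded below by $-\ep_m\,\omega$ for a fixed Kähler form $\omega$ on $X$ and $\ep_m\to 0$. After a Hironaka log-resolution $\pi_m\colon X_m\to X$ of the ideals cutting out their singularities, the pulled-back weight $\pi_m^*\phi_{L,m}$ acquires divisorial singularities along a $\Q$-divisor $B_m$ whose restriction to the generic fiber of $p\circ\pi_m$ has SNC support.

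Starting from this decomposition, the point is to massage the data into the exact format of Conjecture~\ref{conj2}. Multiplying the divisorial coefficients of $B_m$ by a factor $(1-\delta_m)$, with $\delta_m\to 0$, ensures the klt property on the generic fiber; the openness result \cite{BoB15,GuanZhou15} then guarantees that the integrability condition \ref{nefcon} of Setting~\ref{set} is preserved along the approximation. The loss of positivity $-\ep_m\omega$ and the smooth remainder on $X_m$ are collected into a semipositively curved $\Q$-factor $\Lambda_m$, at the cost of twisting the line bundle by a small multiple of a fixed $\Q$-ample class on $X$, a perturbation that will disappear in the limit. Finally, because $K_X+L$ is $p$-big with finitely generated fiberwise canonical rings, one can mimic, in the relative setting, the Zariski decomposition construction carried out in the proof of Definition-Proposition~\ref{KElog}; after a further birational model $X_m'\to X_m$ one obtains $K_{X_m'}+L_m\equiv_\Q A_m+E_m$ with $A_m$ relatively semiample and big and $E_m$ effective, so that the required sheaf isomorphism is automatic over $Y°$.

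Applying Conjecture~\ref{conj2} to each approximant $(X_m',L_m)$ yields positively curved metrics $e^{-\phi_{{\rm KE},m}}$ on $K_{X_m'/Y}+L_m$ extending across the degenerate fibers. The last step is the limit passage, for which I would use the characterization of the KE weight through the non-pluripolar Monge-Ampère equation in Remark~\ref{rem:defi}(d): on each smooth fiber $X_y$ the density $e^{-\phi_L}$ is approximated in $L^1$ by $e^{-\phi_{L,m}}$ inside the same big class $c_1(K_{X_y}+L)$, so one expects $\phi_{{\rm KE},m}|_{X_y}$ to converge in $L^1_{\rm loc}$ to $\phi_{\rm KE}|_{X_y}$. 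The positive curvature of $\phi_{\rm KE}$ on $X°$ and its canonical extension across $X\ssm X°$ then follow from taking the upper semicontinuous regularization of $\limsup_m \phi_{{\rm KE},m}$ on a common birational model. The hard part will be this convergence step: one must simultaneously control the relative Zariski decompositions, the fiberwise Monge-Ampère masses, and the variation in $y$, in a setting where the singularity profile of the approximating data changes with $m$ and the model $X_m'$ itself moves along the sequence.
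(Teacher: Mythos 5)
Your overall architecture (Demailly regularization, log resolution, reduction to a klt divisor plus a smooth semipositive factor, relative Zariski decomposition, limit passage) is the same as the paper's, but two of your steps have genuine gaps. First, the klt reduction. In Conjecture~\ref{conj} the weight $\phi_L$ is only constrained by the fiberwise integrability condition \ref{nefcon}, which concerns $\int_{X_y}|s|^{2/m}e^{-\phi_L}$ for pluricanonical sections $s$; since such sections all vanish along the negative part $E$ of the Zariski decomposition, $\phi_L$ may have Lelong numbers $\geq 1$ along components of $E$, and the divisor $B_m$ produced by the log resolution may then have coefficients $\geq 1$ there. Rescaling the coefficients by $(1-\delta_m)$ with $\delta_m\to 0$ cannot bring such coefficients below $1$, so your approximants need not satisfy the klt hypothesis of Conjecture~\ref{conj2}. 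The paper's device is different and is the crux of the reduction: one sets $\Gamma_\ep:=B_\ep\wedge E$, rewrites the Zariski decomposition as $K+(B_\ep-\Gamma_\ep)+A_\ep = M+(E-\Gamma_\ep)$, i.e.\ absorbs the common part into the negative part of the decomposition, and then deduces from \ref{nefcon} together with the monotonicity of the Demailly approximants that $B_\ep-\Gamma_\ep$ \emph{is} klt on the generic fiber. The KE currents of the two data differ by the explicit term $[\Gamma_\ep]$, so positivity transfers. Your proposal does not address the interaction between $B_m$ and $E$ at all.

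Second, the limit passage. Taking the upper semicontinuous regularization of $\limsup_m\phi_{{\rm KE},m}$ produces \emph{a} psh weight, but nothing in your argument identifies it with $\phi_{\rm KE}$; an $L^1_{\rm loc}$ convergence of the densities $e^{-\phi_{L,m}}$ does not by itself give convergence of the solutions of the fiberwise Monge--Ampère equations, especially when the cohomology class is also being perturbed by $\ep_m H$ and the birational model moves with $m$. The paper splits this into two convergences with different mechanisms: for the perturbation $L\rightsquigarrow L+\delta(K_X+L)$ (chosen instead of $\delta H$ precisely so that the Zariski decomposition merely rescales), it proves uniform $L^\infty$ bounds on the potentials via the a priori estimates of \cite{EGZ} plus uniform $L^r$ control of the densities, and identifies the limit through the variational characterization (Claims~\ref{claim1} and~\ref{claim2}); for the Demailly approximation, the weights \emph{decrease}, so the comparison principle gives monotone convergence of the KE potentials and the limit is identified by continuity of the Monge--Ampère operator along decreasing sequences. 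Finally, note that the extension across $X\ssm X°$ is not a by-product of usc regularization: it is a separate, unconditional statement (Proposition~\ref{extension}) requiring uniform upper bounds on $\sup_{X_y}\tau_y$ near the degenerate fibers, obtained from the $L^{2/m}$ Ohsawa--Takegoshi theorem and the uniform bound on the fiberwise volumes.
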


At this stage, we are not able to prove Conjecture~\ref{conj} (or Conjecture~\ref{conj2}) in full generality but only in the particular case where the metric $h_L=e^{-\phi_L}$ on $L$  has vanishing Lelong numbers; i.e. $ \forall x\in X, \,\nu(\phi_L,x)=0 $. The proof is given in Section~\ref{sec:conv} and relies on the approach developed in Section~\ref{sec:approx}, after a reduction step explained in Section~\ref{sec:reduction}. 

\begin{theo}
\label{thmaa}
Conjecture~\ref{conj} holds true provided that Lelong numbers of the metric $h_L=e^{-\phi_L}$ of $L$ are vanishing.  
\end{theo}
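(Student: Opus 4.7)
My plan is to follow the general scheme of Tsuji's iteration outlined in the introduction, exploiting the fact that the vanishing of Lelong numbers of $\phi_L$ makes it possible to approximate $(L,h_L)$ fiber by fiber by smooth metrics with arbitrarily small perturbation of the curvature, without changing the multiplier ideal structure in an uncontrolled way.

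The first step is a reduction. By Theorem~\ref{thmaaa}, it suffices to work in the setting of Conjecture~\ref{conj2}, so I may replace $(L,h_L)$ by $B+\Lambda$ with $B$ klt SNC on generic fibers and $h_\Lambda$ smooth semipositive, provided I can carry out a preliminary approximation of the original $\phi_L$ by data of this sort. Here is where the vanishing Lelong number hypothesis enters decisively: by Demailly's regularization theorem applied fiberwise and globally, one can write $\phi_L$ as a decreasing limit of quasi-psh weights $\phi_{L,\varepsilon}$ whose multiplier ideals are trivial (cf. the openness conjecture used in Remark~\ref{rem:defi}(b)) and which are smooth outside a family of analytic sets controlled uniformly in $y$. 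After passing to a log-resolution, each $\phi_{L,\varepsilon}$ then fits the framework of Conjecture~\ref{conj2}, and the resulting fiberwise KE potentials $\varphi_{{\rm KE},\varepsilon,y}$ converge to $\varphi_{{\rm KE},y}$ in $L^1$ on each smooth fiber as $\varepsilon\to 0$ by uniqueness and stability for non-pluripolar Monge--Ampère equations in the big class.

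The core of the argument is the construction of the global relative potential and the proof of its plurisubharmonicity. The plan is to realize each $\varphi_{{\rm KE},\varepsilon,y}$ as the limit, as $m\to\infty$, of a suitably renormalized $m$-th Bergman kernel on $\lfloor m(K_{X_y}+L)\rfloor$ twisted by the measure $e^{-(m-1)\varphi_{{\rm KE},\varepsilon,y}-\phi_{L,\varepsilon}}$; equivalently, one iterates Tsuji's Bergman-type map
\[
T_m(\psi) := \tfrac{1}{m}\log \sum_j |\sigma_j^{(\psi)}|^2,
\]
where $(\sigma_j^{(\psi)})$ is an $L^2$-orthonormal basis of $H^0(X_y,\lfloor m(K_{X_y}+L)\rfloor)$ with respect to the weight $(m-1)\psi+\phi_{L,\varepsilon}$. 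The fixed point of this iteration is, up to a universal normalization, the Kähler--Einstein weight. The crucial ingredient is the theorem of Berndtsson--P\u aun on the positivity of direct images: the fiberwise weights $T_m(\psi_y)$, built from a globally defined psh weight $\psi$ on $K_{X/Y}+L$, assemble into a psh weight on $K_{X°/Y°}+L$. By induction on the iteration step and passage to the limit $m\to\infty$, one obtains positivity of the curvature of $\varphi_{{\rm KE},\varepsilon}$ on $X°$; letting $\varepsilon\to 0$ and taking the upper semicontinuous regularization then gives positivity of $\Theta_{\varphi_{{\rm KE}}}(K_{X/Y}+L)\ge 0$ on $X°$.

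For the canonical extension across $X\setminus X°$, the standard argument applies once positivity is established on $X°$: the global weight $\varphi_{\rm KE}$ has minimal singularities on each fiber by Definition-Proposition~\ref{KElog}, hence is locally bounded above by an $L^1$ comparison with a fixed reference psh weight on $K_{X/Y}+L$ coming from a relative Zariski decomposition (already available in this setup). The extension is then defined by upper semicontinuous regularization, and it is automatically psh and canonical. The principal obstacle, in my view, lies in the middle step: controlling the Bergman iteration uniformly in $y$ and $\varepsilon$ in the presence of base points, since $K_{X_y}+L$ is only big, and in particular showing that the fixed-point of $T_m$ has the right asymptotic behavior near the augmented base locus. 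This is precisely where the small Lelong number hypothesis is expected to be used repeatedly, to gain the integrability needed to apply Berndtsson--P\u aun and the Ohsawa--Takegoshi extension theorem along each step.
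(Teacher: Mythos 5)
Your overall strategy is the right one --- realize the fiberwise KE potential as a limit of Bergman-kernel-type constructions and import positivity from Berndtsson--P\u aun --- but there are two genuine problems. The first is your opening reduction. Invoking Theorem~\ref{thmaaa} to ``work in the setting of Conjecture~\ref{conj2}'' is a wrong turn: the regularization in the proof of Theorem~\ref{thmaaa} (Step~2 of Proposition~\ref{klt}) replaces $\phi_L$ by weights with \emph{analytic singularities} and, after a log resolution, by the canonical singular weight of a klt divisor $B_\ep$. That weight has strictly positive Lelong numbers along $B_\ep$, so you have exited the hypotheses of the theorem you are proving and landed in exactly the case the paper cannot handle (save for the special orbifold coefficients of Section~\ref{orbifold}). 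The point of the vanishing-Lelong-number hypothesis is that Demailly regularization then produces genuinely \emph{smooth} approximants $\phi_{L,\ep}$ with $\ddc(\phi_{L,\ep}+\ep\phi_H)\ge 0$, so the correct reduction is directly to $h_L$ smooth (Section~\ref{sec:reduction}). Even there, your claim that the approximate KE potentials converge ``by uniqueness and stability'' skips a real issue: $\ddc\phi_{L,\ep}$ is not semipositive, so the approximate KE equation is only well-posed after adding $\delta H$ for $H$ ample, and one must run a double limit $\delta=\delta_\ep\to 0$ with a monotonicity/domination argument (Proposition~\ref{conver}) rather than a generic stability theorem.

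The second gap is that the analytic core is missing, and its architecture is more involved than your single iteration $T_m$. Because $L$ is only a $\Q$-line bundle and $K_{X_y}+L$ has base points, the paper needs a \emph{two-level} scheme: an outer Ricci-type iteration in $m$ solving $(p\om_A+\ddc\vp_m)^n=|s_E|^2e^{\vp_m-\frac{p-1}{p}\vp_{m-1}-f_L}\om^n$, whose convergence to the KE metric is a contraction estimate $\|\vp_m-\vp_{m-1}\|_{L^\infty}\le\frac{p-1}{p}\|\vp_{m-1}-\vp_{m-2}\|_{L^\infty}$ proved via comparison and domination principles (Proposition~\ref{convricci}); and, for each fixed $m$, an inner Bergman iteration in $\ell$ whose convergence requires a uniform peak-section lower bound $C_\ell\ge\kappa_{\ep,\ell}\,\frac{\ell^n}{n!}\,C_{\ell-1}$ (Proposition~\ref{keyprop1}, resting on the Gaussian estimates of Claim~\ref{cla}, uniform in the base point, and on H\"ormander's theorem with the singular weight $\log|s_{E_\ep}|^{2\{\ell p\}}$) together with a matching integral upper bound via H\"older and Riemann--Roch (Proposition~\ref{prop:int}). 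On top of this sit the approximations of the semiample part $A$ by ample classes $A_\delta$ and of $|s_E|^2$ by $|s_E|^2+\ep^2$, with uniform $L^\infty$ convergence (Proposition~\ref{convv}). You correctly identify this as ``the principal obstacle,'' but identifying it is not overcoming it: as written, the fixed-point claim for your operator $T_m$ and the passage of positivity through the double limit are assertions, not proofs.
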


\section{Proof of Theorem~\ref{thmaaa}}
\label{proof1}

We have organized this section in the following way: first we show that \ref{main11} implies \ref{main12}. The proof of \ref{main11} will be given in the second part of our arguments.\\

\noindent
\textbf{The extension property.}

\noindent

\begin{prop}
\label{extension}
In the Setting~\ref{set}, the local weights of $\phi_{\rm KE}$ are locally bounded above near $X\ssm X°$.
\end{prop}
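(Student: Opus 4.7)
Since the statement is local around $X\setminus X°$, fix an arbitrary $x_0\in X\setminus X°$ with $y_0:=p(x_0)\in Y\setminus Y°$. Choose a coordinate neighborhood $U$ of $x_0$ together with holomorphic trivializations of both $K_{X/Y}+L$ and $L$ over $U$; in these trivializations $\phi_{\rm KE}$ becomes a function on $U\cap X°$ and $\phi_L$ a psh function on $U$, in particular bounded above by some constant $M<+\infty$ on any relatively compact $\overline{U'}\subset U$. The plan is to combine the fiberwise Monge--Amp\`ere equation satisfied by $\phikey$ with the sub-mean value inequality applied on each smooth fiber, in order to convert the cohomological normalization of $\omkey$ into a uniform pointwise upper bound on $\phi_{\rm KE}$.

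The first step is to produce a uniform fiberwise $L^1$ bound on $e^{\phikey}$. By Definition-Proposition~\ref{KElog}\ref{keeq} applied to each $X_y$ with $y\in Y°$, the Monge--Amp\`ere equation $\la(\ddc\phikey)^{\dim X_y}\ra=e^{\phikey-\phi_L}\,dV_y$ holds on $X_y$, with $dV_y$ the reference volume form supplied by the trivializations. Integration over $X_y$, together with item \ref{min} of the same statement, gives $\int_{X_y}e^{\phikey-\phi_L}\,dV_y=\vol(K_{X_y}+L_y)=:v_y$, a quantity which is locally constant on $Y°$ by the $p$-bigness assumption in Setting~\ref{set}\ref{pbig}. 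Combined with the bound $\phi_L\le M$ on $\overline{U'}$, this yields
\[
\int_{X_y\cap U'}e^{\phikey}\,dV_y\;\le\;e^M\,v_y,
\]
uniformly in $y$ varying in a small neighborhood $V$ of $y_0$.

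The second step upgrades this $L^1$ control into a pointwise one. Since $\phikey$ is psh on $X_y\cap U'$, the sub-mean value inequality combined with Jensen's inequality for the convex function $t\mapsto e^t$ gives, for any $z\in X_y\cap U''$ with $U''\subset\subset U'$ and any ball $B_{X_y}(z,r)\subset X_y\cap U'$,
\[
\phikey(z)\;\le\;\log\!\left(\frac{1}{\vol B_{X_y}(z,r)}\int_{B_{X_y}(z,r)}e^{\phikey}\,dV_y\right)\;\le\; C-2\dim X_y\cdot\log r,
\]
where $C$ depends only on $M$ and on a local upper bound for $v_y$.

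The main obstacle is to guarantee a uniform lower bound $r\ge r_0>0$ as $y$ ranges near $y_0$ and $z\in X_y\cap U''$: as $y\to y_0$ the fibers $X_y$ may degenerate (becoming thin or acquiring singularities in the Hausdorff limit), so the intrinsic radius of a ball inside $X_y$ is not immediately controlled by the ambient Euclidean radius. This is addressed by choosing $U''\subset\subset U'\subset\subset U$ as nested polydisc-type neighborhoods adapted to the local structure of $p$: the properness of $p$ together with the Hausdorff continuity of the family $(X_y)$ ensures that, for $y$ in a sufficiently small $V\ni y_0$, the set $X_y\cap U''$ remains at positive ambient distance from $X_y\cap\partial U'$, uniformly in $y$, and this ambient distance supplies the required $r_0$. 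Assembling the three steps yields $\phi_{\rm KE}\le C'$ on $U''\cap X°$, which is the local upper boundedness of $\phi_{\rm KE}$ near $x_0$.
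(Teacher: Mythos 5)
Your first step is sound and is essentially the paper's own normalization (the bound \eqref{maj}): integrating the fiberwise Monge--Amp\`ere equation gives $\int_{X_y}e^{\phikey-\phi_L}\,dV_y=\vol(K_{X_y}+L_y)$, which is independent of $y\in Y°$ via a relative Zariski decomposition, and $\phi_L$ is locally bounded above. The gap is in the second and third steps. The sub-mean value inequality is a statement about Euclidean balls in a holomorphic coordinate chart; to run your argument you need, around every $z\in X_y\cap U''$, a holomorphic coordinate ball of radius at least $r_0>0$ contained in $X_y\cap U'$ whose Lebesgue measure is comparable to $dV_y$, \emph{uniformly as $y\to y_0\in Y\setminus Y°$}. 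Positive ambient distance from $X_y\cap U''$ to $X_y\cap\partial U'$ does not supply this: the fibers can bend arbitrarily sharply while staying deep inside $U'$. Concretely, for $p(z_1,z_2)=z_1z_2$ near the origin, the point $w_y=(\sqrt{y},\sqrt{y})\in X_y$ remains at fixed ambient distance from $\partial U'$, yet in the chart $z_1\mapsto(z_1,y/z_1)$ the largest disc centred at $\sqrt{y}$ whose image stays in $U'$ has radius $O(|y|^{1/2})\to 0$, so your bound $C-2n\log r$ diverges; and a chart with large domain, such as $\zeta\mapsto(\sqrt{y}\,e^{\zeta},\sqrt{y}\,e^{-\zeta})$, has Jacobian of order $|y|$, which reintroduces the divergence through the comparison between $d\lambda$ and $dV_y$. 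The loss of a uniform noncollapsing constant on degenerating fibers is precisely the difficulty here, and it cannot be waved away by Hausdorff continuity of the fibers.

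This is exactly the obstruction the paper's proof is built to circumvent, following \cite[\S 3.3]{Paun12}. There one writes $\phikey(x)=\lim_m\sup_f\frac1m\log|f(x)|$ over holomorphic functions $f$ on $\Omega_y$ with normalized $L^2$ norm (Demailly's Bergman approximation), extends each $f$ by the $L^{2/m}$ Ohsawa--Takegoshi theorem to a holomorphic function $F$ on the \emph{fixed} Stein neighbourhood $\Omega$ in the total space with controlled $L^{2/m}$ norm, and only then applies the mean value inequality to $|F|^{2/m}$ on $\Omega$, where the geometry is uniform and independent of $y$. Some such transfer to the ambient space (or an equivalent uniform geometric input on the fibers) appears unavoidable; the argument cannot be run purely fiberwise, and this is the point you would need to repair.
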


\begin{proof}
The proof of the Proposition follows very closely \cite[\S 3.3]{Paun12}, so we will mostly sketch the proof. 

Let $y\in Y°$ and let us pick any point $x$ in $X_y$. We choose a Stein neighborhood $\Omega$ of $x$ in $X$; we write $\Omega_y = \Omega \cap X_y$, choose
a potential $\tau_y$ of $\omkey$ such that the equation satisfied by $\tau_y$ on $\Omega_y$ is
$$\la (\ddc \tau_y)^n \ra = e^{\tau_{y}-\vp_L}\left| \frac {dz}{dt} \right|^2$$
where $\vp_L$ is a local weight for $h_L$ on $\Omega$, and the coordinates $(z_1, \ldots, z_n, t_1, \ldots, t_m)$ are chosen so that $p(\underline z, \underline t)=\underline t$.
We set $H_{m,y}:=\left\{f\in \mathcal O(\Omega_y); \int_{\Omega_y}|f|^2e^{-m\tau_y}\la(\ddc \tau_y)^n \ra \le 1\right\}$.
Note that $e^{-m\tau_y}\la (\ddc \tau_y)^n\ra = e^{-(m-1)\tau_y-u}\left| \frac {dz}{dt} \right|^2$ for some psh function $u$ on $\Omega$. Then, thanks to Demailly's approximation theorem, one has $$\tau(y)(x)= \lim_{m\to \infty}\sup_{f\in H_{m,y}} \frac 1 m \log |f(x)|$$
But for $f\in H_{m,y}$, Hölder's inequality yields 
\begin{equation}
\label{maj}
\int_{\Omega_y}|f|^{2/m}e^{-\tau_y}\la(\ddc \tau_y)^n\ra \le \left(\mathrm{vol}(K_{X_y}+L_y)\right)^{\frac{m}{m-1}}
\end{equation}
The right hand side is bounded above independently of $y$ and $m$; this can be seen for instance by finding a birational model $\pi:X'\to X$ where $\pi^*(K_{X}+L)$ has a relative Zariski decomposition $A+E$ so that the volume of $K_{X_y}+L_y$ is simply the intersection number $(A_y^n)$ which is independent of $y\in Y°$. Furthermore, the $L^{2/m}$ version of Ohsawa-Takegoshi extension theorem \cite{BP2} yields a holomorphic function $F$ on $\Omega$ that extends $f$ and such that
$$|F(x)|^{2/m} \le C_{\Omega} \int_{\Omega} |F|^{2/m} |dz|^2 \le C \int_{\Omega_y}|f|^{2/m}\left| \frac {dz}{dt} \right|^2\le C' \int_{\Omega_y}|f|^{2/m} e^{-\tau_y}\la (\ddc \vp_y)^n\ra$$
as $\vp_L$ is bounded above on $\Omega$. Moreover, the integral on the right hand side is bounded above uniformly in $y$ and $m$ by  \eqref{maj}. Therefore $\sup_{X_y}\tau_y \le C$ for a constant $C$ that uniform as long as $y\in Y°$ varies in compact subsets of $Y$. 
\end{proof}


\noindent
\textbf{Regularization}
\smallskip


\noindent 
Thanks to Proposition~\ref{extension}, Conjecture~\ref{conj} reduces to its first Item~\ref{main11}. That property is local on the base so from now on, the base $Y$ will be a small Stein open set. The rest of this section is devoted to showing that Conjecture~\ref{conj2} implies that Item~\ref{main11} in Conjecture~\ref{conj} holds. 

\begin{lemm}
\label{zar}
It is enough to prove Item~\ref{main11} of Conjecture~\ref{conj} when $K_{X}+L$ admits a relative Zariski decomposition, 
namely $K_X+L \equiv_\mathbb{Q} A+E$ for some relatively semiample and big $\mathbb{Q}$-line bundle $A$ and an effective $\mathbb{Q}$-divisor $E$ such that the natural map 
$$p^*p_*\mathcal O_X(mA) \longrightarrow p^*p_*\mathcal O_X(mA+mE)$$ 
is a sheaf isomorphism over $Y°$ for any $m$ divisible enough. 
\end{lemm}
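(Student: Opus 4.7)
The plan is to construct a birational modification $\mu:\widetilde{X}\to X$ over $Y$ on which the pull-back of $K_X+L$ admits the relative Zariski decomposition of Conjecture~\ref{conj2}, apply Item~\ref{main11} of Conjecture~\ref{conj} in this restricted setting, and then transfer the positivity back to $X$ via the birational invariance of singular Kähler--Einstein metrics recalled in Remark~\ref{rem:defi}(a).

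\textbf{Construction of the Zariski model.} Since $Y$ is a small Stein open set and since condition \ref{pbig} in Setting~\ref{set} ensures that $R(X_y,L|_{X_y})$ is finitely generated for every $y\in Y°$, a relative MMP / BCHM argument in the projective--over--Stein setting produces, after possibly shrinking $Y°$, a relative log canonical model $q:X_{lc}\to Y°$ together with a birational map $f:X\dashrightarrow X_{lc}$ over $Y$ such that $K_{X_{lc}/Y°}+L_{lc}$ is $q$-ample (with $L_{lc}:=f_*L$). Let $\mu:\widetilde X\to X$ be a resolution of the indeterminacies of $f$, so that $\nu:=f\circ\mu:\widetilde X\to X_{lc}$ is a morphism. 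Exactly as in the relative analogue of formula~\eqref{zard}, we get
$$\mu^*(K_X+L)=\nu^*(K_{X_{lc}/Y°}+L_{lc})+F$$
for some effective $\nu$-exceptional divisor $F$. Setting $\widetilde L:=\mu^*L$, $A:=\nu^*(K_{X_{lc}/Y°}+L_{lc})$ and $E:=F+K_{\widetilde X/X}$, this yields
$$K_{\widetilde X}+\widetilde L=A+E,$$
with $A$ relatively semiample and big, and $E$ effective. The sheaf isomorphism demanded in the statement, namely $(p\mu)^*(p\mu)_*\mathcal O_{\widetilde X}(mA)\simeq (p\mu)^*(p\mu)_*\mathcal O_{\widetilde X}(m(A+E))$ over $Y°$, follows because $F$ is $\nu$-exceptional while $K_{\widetilde X/X}$ is $\mu$-exceptional, so that for $y\in Y°$ and $m$ divisible enough one has $H^0(\widetilde X_y,mA|_{\widetilde X_y})\simeq H^0(\widetilde X_y,m(A+E)|_{\widetilde X_y})$.

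\textbf{Descent via birational invariance.} Equip $\widetilde L$ with the singular metric $\mu^*h_L=e^{-\mu^*\phi_L}$, whose curvature current remains positive. The pair $(\widetilde X,\widetilde L)$ then lies in the Zariski-decomposition regime, so by the working hypothesis of the lemma the fibrewise Kähler--Einstein metric on $K_{\widetilde X/Y}+\widetilde L$ over $\widetilde X°:=(p\circ\mu)^{-1}(Y°)$ is positively curved, with curvature current $\widetilde\om_{\mathrm{KE}}\ge 0$. Applying Remark~\ref{rem:defi}(a) fibrewise gives $\widetilde\om_{\mathrm{KE},y}=\mu_y^*\om_{\mathrm{KE},y}+[K_{\widetilde X_y/X_y}]$ for every $y\in Y°$, which globalises to $\widetilde\om_{\mathrm{KE}}=\mu^*\om_{\mathrm{KE}}+[K_{\widetilde X/X}]$ on $\widetilde X°$. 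Since $K_{\widetilde X/X}$ is an effective $\mu$-exceptional divisor, pushing forward the positive current $\widetilde\om_{\mathrm{KE}}$ by $\mu_*$ kills $[K_{\widetilde X/X}]$ and leaves $\om_{\mathrm{KE}}\ge 0$ on $X°$, which is precisely Item~\ref{main11} of Conjecture~\ref{conj}.

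\textbf{Main obstacle.} The only non-formal point is the existence of the relative log canonical model: one must upgrade the fibrewise finite generation guaranteed by \ref{pbig} to genuinely relative finite generation over a Zariski open subset of $Y$, which uses the projective structure of $p:X\to Y$ and the klt-type input coming from the specialisation of $L$. Granted this, the remaining arguments reduce to routine birational-invariance bookkeeping together with the transparent verification of the sheaf isomorphism via the exceptional nature of $F$ and $K_{\widetilde X/X}$.
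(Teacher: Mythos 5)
Your proposal is correct and follows essentially the same route as the paper: use the (relative) finite generation of $\bigoplus_m p_*(m(K_{X/Y}+L))$ to produce a birational model $\mu:\widetilde X\to X$ over a Zariski-dense open $Y_0\subset Y$ on which the relative Zariski decomposition holds, apply the assumed case there, and descend via the birational invariance of Remark~\ref{rem:defi}(a) together with $\mu_*$, which kills the exceptional part. The paper is terser (it simply blows up the base locus of the relative algebra rather than invoking the relative log canonical model explicitly, and concludes with Proposition~\ref{extension} to pass from $Y_0$ to $Y°$), but the content is the same.
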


\begin{proof}
By assumption, the $\mathcal O_Y$-algebra $\mathcal E:=\bigoplus_{m\ge 0} p_*(m (K_{X/Y}+L))$ is finitely generated. 
By blowing up the base locus of $\mathcal E$, we can find a birational map 
$\mu: \widetilde{X} \rightarrow X$ such that on the generic fiber $\widetilde{X}_y$ of $\mu\circ p$,
we have the Zariski decomposition of $K_{\widetilde{X}_y} + \mu^* L |_{\widetilde{X}_y}$.
Therefore, there exists a Zariski dense open subset $Y_0 \subset Y$ such that $K_{\widetilde{X}} + \mu^* L$ 
admits a relative Zariski decomposition $K_{\widetilde{X}} + \mu^* L = A+E$ on $(\mu\circ p)^{-1} (Y_0)$ and
for any $m$ divisible enough, the natural map
$$(\mu\circ p)^*(\mu\circ p)_*\mathcal O_{\widetilde{X}}(mA) \longrightarrow (\mu\circ p)^* (\mu\circ p)_*\mathcal O_{\widetilde{X}}(mA+mE) \qquad\text{on } (\mu\circ p)^{-1} (Y_0)$$
is an isomorphism.

\medskip

Now, let $\om_{{\rm KE}, \widetilde X}$ (resp. $\om_{{\rm KE}, X}$) be the relative Kähler-Einstein metric with respect to $(\widetilde X, \mu^*L, \mu^*\phi_L)$ (resp. $(X,L, \phi_L)$)
over $Y_0$. 
Thanks to Remark \ref{rem:defi} $(a)$, we have
$$\mu_* \om_{{\rm KE}, \widetilde X}=\om_{{\rm KE}, X}  \qquad\text{on } p^{-1} (Y_0) .$$
If we can prove that $\om_{{\rm KE}, \widetilde X} \geq 0$ on $(\mu\circ p)^{-1} (Y_0)$, then
$$\om_{{\rm KE}, X}  =\mu_* \om_{{\rm KE}, \widetilde X}  \geq 0 \qquad\text{on } p^{-1} (Y_0) .$$
Together with Proposition \ref{extension}, the lemma is proved.
\end{proof}

\begin{prop}
\label{klt}
It is enough to prove Item~\ref{main11} of Conjecture~\ref{conj} when $(L, h_{L}) =(B+\Lambda, h_{B}h_{\Lambda})$ 
where $B$ is an effective $\Q$-divisor and the restriction
on the generic fiber $B|_{X_y}$ is klt with normal crossing support, $h_B$ is the canonical singular metric on $B$ and $\Lambda$ is a $\Q$-line bundle with a smooth hermitian metric $h_{\Lambda}$ such that $i\Theta_{h_{\Lambda}} (\Lambda) \geq 0$ on $X$.
\end{prop}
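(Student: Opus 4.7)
By Lemma~\ref{zar}, I would first reduce to the case where $K_X+L$ admits a relative Zariski decomposition $K_X+L \equiv_\Q A+E$ with $A$ relatively semiample and big. From this position, the strategy is to approximate $h_L$ by a sequence of metrics of the restricted form demanded by Conjecture~\ref{conj2} (divisorial singular factor times a smooth non-negatively curved factor), apply the conclusion of that conjecture to each approximation, and pass to the limit. The natural tool is Demailly's analytic regularization theorem, which replaces general psh weights by psh weights with analytic (divisorial after modification) singularities at the price of a small loss of positivity.

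Concretely, fix a Kähler form $\omega$ on $X$ and a smooth positively curved metric $h_M$ on an ample $\Q$-line bundle $M$ with $i\Theta_{h_M}(M)\ge \omega$. For each $\epsilon>0$ Demailly's regularization produces a psh weight $\phi_{L,\epsilon}$ with analytic singularities, decreasing to $\phi_L$ almost everywhere, satisfying $\ddc\phi_{L,\epsilon}\ge -\epsilon\omega$. The twisted weight $\phi_{L,\epsilon}+\epsilon\phi_M$ on $L+\epsilon M$ then has nonnegative curvature, and after a birational modification $\mu_\epsilon:\wX_\epsilon\to X$ resolving its singular scheme and ensuring snc support on generic fibers of $p\circ\mu_\epsilon$, one would obtain a decomposition
$$ \mu_\epsilon^*(L+\epsilon M) \equiv_\Q B_\epsilon + \Lambda_\epsilon, $$
where $B_\epsilon$ is an effective $\Q$-divisor with snc support and $\Lambda_\epsilon$ carries a smooth metric $h_{\Lambda_\epsilon}$ of nonnegative curvature. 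The klt property of $B_\epsilon|_{(\wX_\epsilon)_y}$ for $y\in Y°$ generic would follow from the fiberwise integrability condition \ref{nefcon} of Setting~\ref{set} combined with the generalized strong openness theorem \cite{GuanZhou15, BoB15}, along the lines of the argument in Remark~\ref{rem:defi}(b).

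The triple $(\wX_\epsilon,\mu_\epsilon^*(L+\epsilon M), h_{B_\epsilon}h_{\Lambda_\epsilon})$ then satisfies the hypotheses of Conjecture~\ref{conj2}: the relative Zariski decomposition is inherited from the one on $X$ by combining $\mu_\epsilon^*(A+E)$ with $\epsilon\mu_\epsilon^*M$ and $K_{\wX_\epsilon/X}$ and absorbing the semiample part. Assuming Conjecture~\ref{conj2}, the corresponding relative KE weight $\phi_{{\rm KE},\epsilon}$ is semipositively curved over the preimage of $Y°$. Pushing down via $\mu_\epsilon$ and using the birational invariance of Remark~\ref{rem:defi}(a), we obtain a positively curved weight $\phi_{{\rm KE},\epsilon}$ on $K_{X/Y}+L+\epsilon M$ over $X°$. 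Passing to the limit $\epsilon\to 0$, the uniform upper bounds from the proof of Proposition~\ref{extension} (which do not depend on the regularity of $h_L$) yield $L^1_{\rm loc}$-compactness of the family; any accumulation point is psh on $X°$, hence semipositively curved on $K_{X/Y}+L$; and the uniqueness part of Definition-Proposition~\ref{KElog} applied fiber by fiber identifies it with $\phi_{\rm KE}$.

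The hard step will be the final limit identification. It requires stability of the fiberwise KE metric simultaneously under perturbation of the cohomology class (from $L$ to $L+\epsilon M$) and of the singular density (from $\phi_L$ to $\phi_{L,\epsilon}+\epsilon\phi_M$); additionally the modifications $\mu_\epsilon$ need not stabilize, so one has to work with a common tower of blow-ups or use the intrinsic birational invariance to bring the convergence statement down to $X$ itself. I expect both issues to be tractable using the uniform upper bound together with standard stability results for non-pluripolar Monge-Ampère equations with monotone singular data in the spirit of \cite{BEGZ}, but this is where the technical core of the argument lies.
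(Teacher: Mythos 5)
Your overall strategy --- reduce via Lemma~\ref{zar}, regularize $h_L$ by Demailly's theorem into divisorial-plus-smooth data, invoke Conjecture~\ref{conj2}, and pass to the limit --- is the same as the paper's, but the way you compensate for the loss of positivity in the regularization creates a genuine gap. You twist by an auxiliary ample bundle $\ep M$, so the adjoint class becomes $K_X+L+\ep M$. Conjecture~\ref{conj2} requires this class to admit a relative Zariski decomposition, and your claim that $(A+\ep M)+E$ (pulled back and corrected by the relative canonical of the modification) "inherits" this property is false in general: the defining property is that $p^*p_*\mathcal O_X(mA')\to p^*p_*\mathcal O_X(mA'+mE)$ be an isomorphism, i.e. every section of $m(A'+E)$ must vanish along $mE$, and adding an ample $\ep M$ typically creates sections of $m(A+\ep M+E)$ that do not vanish along $E$ at all (for $m\ep$ large this bundle can even be very ample on the fibers). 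So the positive part of the true Zariski decomposition of $K_X+L+\ep M$ --- if one exists --- is strictly larger than $A+\ep M$, and the hypothesis of Conjecture~\ref{conj2} is not verified. Relatedly, your final limit is a simultaneous perturbation of the cohomology class and of the singular density through non-stabilizing modifications, and you correctly flag that you do not control it.

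The paper resolves both problems with one trick: in a first step it twists by $\delta(K_X+L)$ itself rather than by an unrelated ample bundle. Since $K_X+L$ is big (the base being Stein), one chooses a strictly psh weight $\phi_0$ on it with analytic singularities; then $L_\delta:=L+\delta(K_X+L)$ with weight $\phi_L+\delta\phi_0$ has Kähler-current curvature while the adjoint class merely rescales, so the relative Zariski decomposition is preserved verbatim as $(1+\delta)A+(1+\delta)E$. Convergence as $\delta\to0$ is proved by a variational argument with uniform $L^\infty$ estimates. Only afterwards does the paper apply Demailly regularization, now to a Kähler current, which yields a \emph{decreasing} sequence $\phi_{L,\ep}$ with analytic singularities and semipositive curvature \emph{in the same class} $c_1(L)$; the limit $\ep\to0$ then follows from the comparison principle, and the remaining bookkeeping is subtracting the common part $\Gamma_\ep=B_\ep\wedge E$ so that the divisorial datum restricted to the fibers is klt. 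If you want to salvage your approach, you must replace the twist by $\ep M$ with one that preserves the Zariski decomposition, which essentially forces the paper's choice.
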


\begin{proof}
We proceed in two steps. \\
 
\noindent
{\bf Step 1.} \textit{Reduction to the case where $\ddc \phi_L$ is a Kähler current}

\noindent
By Lemma~\ref{zar} above, one can assume that $K_X+L=A+E$ is a relative Zariski decomposition of $K_X+L$ on $X$. 
As $Y$ is Stein and $K_X +L$ is $p$-big,  $K_X +L$ is big on $X$. 
Therefore, there exists a weight $\phi_{0}$ with analytic singularities on $(K_X+L)$ such that $\ddc \phi_{0} >0$ on $X$.
Let us fix some small $\delta>0$. We set $L_{\delta}:=L+\delta(K_X+L)$ so that the relative Zariski decomposition of $K_X+L_{\delta}$ is $(1+\delta)A+(1+\delta)E$.
Let $\phi_L +\delta \phi_{0}$ be the weight on $L_{\delta}$. Then $\ddc (\phi_L+\delta \phi_{0})$ is a Kähler current.
To finish the proof of Step 1, it remains to prove that 
\begin{enumerate}
\item[$(i)$] The triplet $(X,L_{\delta}, e^{-\phi_L-\delta \phi_{0}})$ admits a relative Kähler-Einstein metric $\omked$ for $\delta > 0$ small enough.
\item[$(ii)$] We have $\omked \to \omke$ in the weak topology when $\delta$ approaches zero. 
\end{enumerate}

\medskip

To make notation more tractable, we will \---from now on and in this first step only \--- work on a fixed fiber $X_y$ and drop all indices $y$. \\

\noindent
$\bullet$ \textit{Proof of $(i)$}. 

\noindent
We know from \eqref{kol} that there exists $p>1$ such that  $e^{\phi_E-\phi_L}\in L^p_{}$.
Then $e^{(1+\delta)\phi_E-(\phi_L+\delta \phi_{0})}\in L^r$ for some $1<r<p$ as long as $\delta$ is small enough. Even better, 
\begin{equation}
\label{Lrd}
||e^{(1+\delta)\phi_E-(\phi_L+\delta \phi_{0})}||_{L^{r}(X)}\le C
\end{equation}
for some uniform $C>0$. Thanks to Definition-Proposition~\ref{KElog}, we get $(i)$.\\

\noindent
$\bullet$ \textit{Proof of $(ii)$}. 

\noindent
It requires more work. Let $\om_A$ be a smooth semipositive form in $c_1(A)$, let $h_L$ (resp. $h_E$) be a smooth hermitian metric on $\mathcal O_X(L)$ (resp. on $\mathcal O_X(E)$) and let $\omega$ be a reference Kähler form such that 
$$i\Theta_{\om}(K_X)+i\Theta_{h_L}(L)=\om_A+i\Theta_{h_E}(E).$$
Finally, let us choose potentials $\vp_L,\vp_0, \vp_E$ such that $i\Theta_{h_L}(L)+\ddc \vp_L=\ddc \phi_L$, $i\Theta_{\om}(K_X)+i\Theta_{h_L}(L)+\ddc \vp_0=\ddc \phi_0$ and $ i\Theta_{h_E}(E)+\ddc \vp_E=[E]$. The Kähler-Einstein metric $\omked$ can be written as $\omked=\om_{\delta}+(1+\delta)[E]$ where $\om_{\delta}=(1+\delta)\om_A+\ddc \vpd \in c_1((1+\delta)A)$ is a positive current with bounded potentials such that
 \begin{equation}
 \label{eqMA}
  ((1+\delta) \om_A+\ddc \vpd)^n  = e^{\vpd+(1+\delta)\vp_E-\vp_L-\delta \vp_0}\om^n
  \end{equation}
Let us write $d\mu:=e^{\vp_E-\vp_L}\om^n$ and $d\mu_{\delta}:= e^{(1+\delta)\vp_E-\vp_L-\delta \vp_0}\om^n$.

\begin{claim}
\label{claim1}
There exists a constant $C>0$ independent of $\delta$ such that 
\begin{equation}
\label{C0d}
||\vpd||_{L^{\infty}(X)}\le C.
\end{equation}
\end{claim}

\begin{proof}[Proof of Claim~\ref{claim1}]
A first trivial observation is that one can rewrite \eqref{eqMA} as a Monge-Ampère equation in a fixed cohomology class as follows
 $$(\om_A+\ddc \frac{1}{1+\delta}\vpd)^n  = e^{\vpd+(1+\delta)\vp_E-\vp_L-\delta \vp_0-n\log(1+\delta)}\om^n$$

Thanks to the a priori estimates established in \cite{EGZ}, the claim comes down to showing that there exists a uniform $C>0$ such that 
\begin{equation}
\label{bornesup}
\sup_{X}\vpd \le C
\end{equation}
and that $e^{(1+\delta)\vp_E-\vp_L-\delta \vp_0}$ admits uniform $L^p$ bounds for some $p>1$; but we already know that from \eqref{Lrd}. Let us prove \eqref{bornesup} now. As $\vpd$ has bounded potentials, its Bedford-Taylor Monge-Ampère has full mass, i.e.  
$$\int_X e^{\vpd+(1+\delta)\vp_E-\vp_L-\delta \vp_0}\om^n = (1+\delta)^n (A^n)$$ 
and, in particular, the integral $\int_X e^{\vpd+(1+\delta_0) \vp_E}dV$ is uniformly bounded above for $\delta_0>0$ fixed. An application of Jensen's inequality yields $\int_X \vpd dV \le C$, and the bound \eqref{bornesup} then follows from standard properties of quasi-psh functions. 
\end{proof}

The proof of Item $(ii)$ above now follows from the following claim. 

\begin{claim}
\label{claim2}
When $\delta$ approaches zero, the function $\vpd$ converges weakly to $\vp$.  
\end{claim}

\begin{proof}[Proof of Claim~\ref{claim2}]
An equivalent formulation of the claim is that  
\begin{equation}
\label{convclaim}
\vpd-\sup_{X}\vpd \underset{\delta \to 0}{\longrightarrow} \wt \vp:=\vp-\sup_X \vp.
\end{equation}
 This is consequence of \cite[Thm.~4.5]{BG}, but the bound \eqref{C0d} actually makes the arguments much easier. We will only recall the main lines. First, one chooses a sequence $\delta_j$ such that $\vp_j:=\vp_{\delta_j}-\sup_X \vp_{\delta_j}$ converges weakly to some sup-normalized $\om_A$-psh function $\psi$; we want to show that $\psi=\wt \vp$.  We use the variational characterization
of $\vp_j$ as the supremum of the functional $\mathcal G_j=\mathcal E_j+\mathcal L_j$ acting on sup-normalized $(1+\delta_j)\om_A$-psh functions. Here, $\mathcal E_j$ is the usual energy functional attached to $(1+\delta_j)\om_A$ and $\mathcal L_j(\bullet)=-\log \int_Xe^{\bullet} d\mu_{\delta_j}$. Thanks to \eqref{Lrd} and \eqref{C0d}, the dominated convergence theorem implies
\begin{equation}
\label{ld}
\lim_{j\to +\infty} \mathcal L_j(\vp_j)= \mathcal L(\psi)
\end{equation}
Moreover, \cite[Lem.~4.6]{BG} implies that 
\begin{equation}
\label{eqd}
\varlimsup_{j\to +\infty} \mathcal E_j(\vp_j)\le  \mathcal E(\psi)
\end{equation}
As $\vp\in \mathrm{PSH}(X,(1+\delta_j)\om_A)$, one has automatically $\mathcal G_j(\vp_j) \ge \mathcal G(\wt \vp)$. Finally, as Bedford-Taylor product is continuous with respect to smooth convergence, one has $\lim_j \mathcal E_{j}(\wt \vp)=\mathcal E(\wt \vp)$. Putting these last two results together with \eqref{ld} and \eqref{eqd}, one finds
$$ \mathcal G(\psi) \ge \varlimsup_{j\to +\infty} \mathcal G_j(\vp_j) \ge \varlimsup _{j\to +\infty} \mathcal G_j(\wt \vp) =\mathcal G(\wt \vp)$$
hence the result. 
\end{proof}



In conclusion,  $\vpd$ converges weakly to $\vp$, hence $\omked$ converges to $\omke$. This argument was done fiberwise, but it clear that the weak convergence on the fiber implies the weak convergence in any small neighborhood of the given fiber as well. This proves $(ii)$ and completes Step 1. \\

\noindent
{\bf Step 2.} \textit{Reduction to the case where $\phi_L$ has analytic singularities}

\noindent
By Step 1, one can assume that $\ddc \phi_L$ is a Kähler current. By Demailly regularization theorem \cite{D2}, $\phi_L$ is the weak, decreasing limit of strictly psh weights $\phi_{L,\ep}$ on $L$ with analytic singularities, say with singularities along the analytic set $Z_{\ep}$. Taking a log resolution $\pi_{\ep}:X_{\ep}\to X$ of $(X,Z_{\ep})$, one can assume that $\pi^{*}\phi_{L,\ep}= \phi_{B_{\ep}}+ \phi_{A_{\ep}}$ where $\phi_{B_{\ep}}$ is the canonical singular psh weight on an effective normal crossing $\Q$-divisor $B_{\ep}$, and $\phi_{A,\ep}$ is a smooth psh weight on some $\Q$-line bundle $A_{\ep}$ with $\Theta_{\phi_{A,\ep}} ({A_\ep}) \geq 0$ on $X_\ep$.

After passing to another birational model if necessary, one can assume that over a generic fiber, we have a Zariski decomposition
$$K_{(X_{\ep})_y} +\pi_{\ep} ^* L_\ep |_{(X_{\ep})_y} = M_y +E_y ,$$
and $B_\ep |_{(X_{\ep})_y}  +E_y$ is normal crossing.
Let $\Gamma_{\ep}:=B_\ep   \wedge E$ be the common part of $B$ and $E$.
We have the following Zariski decomposition
$$K_{(X_{\ep})_y} +\big(B_\ep |_{(X_{\ep})_y}  -\Gamma_{\ep} |_{(X_{\ep})_y} \big)  + A_\ep |_{(X_{\ep})_y} = M_y +\big(E_y -\Gamma_{\ep} |_{(X_{\ep})_y}\big) .$$
Furthermore, thanks to \ref{nefcon} and the decreasing property of $(\phi_{L,\ep})$, we know that the divisor
$\big(B_\ep |_{(X_{\ep})_y}  -\Gamma_{\ep} |_{(X_{\ep})_y} \big)$ is klt on $(X_{\ep})_y$. 

Let $\om_{\ep}$ be the relative Kähler-Einstein metric of 
$(X_{\ep} \rightarrow Y, \pi_{\ep}^*L, \pi_{\ep}^* (\phi_{L,\ep}))$ and let $\om_{\ep}'$ 
be the relative Kähler-Einstein metric of $(X_{\ep} \rightarrow Y,  (B_{\ep}- \Gamma_{\ep}) + A_{\ep}, \phi_{B_{\ep}}-\phi_{\Gamma_{\ep}}+\phi_{A_{\ep}})$. By definition, we have
\begin{equation}\label{birarelaa}
\om_{\ep}=\om_{\ep}'+[\Gamma_{\ep}] .
\end{equation}
If Conjecture~\ref{conj} holds for $(X_{\ep} \rightarrow Y, (B_\ep -\Gamma_{\ep} ) + A_\ep, \phi_{B_{\ep}}-\phi_{\Gamma_{\ep}}+\phi_{A_{\ep}})$, thanks to \eqref{birarelaa} and Remark \ref{rem:defi}, it holds also for 
$(X \rightarrow Y, L, \phi_{L,\ep})$. Finally, when $\ep$ converges to $0$, the relative K\"ahler-Einstein metric of $(X\rightarrow Y, L, \phi_{L,\ep})$ converges to the relative Kähler-Einstein metric of $(X\rightarrow Y, L,\phi_L)$ as a direct consequence of the comparison principle. Therefore Theorem~\ref{thmaaa} is proved.
\end{proof}




\section{Proof of Theorem \ref{thmaa}}
\label{proofthm}
\noindent We will present our arguments in several steps, according to the following plan.
\begin{enumerate}

\item[(a)] \emph{It is enough to prove Theorem \ref{thmaa} in case $h_L$ non-singular.} This is based on two results: we first use that $h_L$ is limit of non-singular metrics whose negative part of the curvature tends to zero. Another important fact we are using is that the algebra associated to $K_X+ L+ \delta H$ is finitely generated, for any $H$ ample and
for any positive rational $\delta$.

\item[(b)] \emph{It is enough to prove Theorem \ref{thmaa} provided that $A$ is an ample $\Q$-bundle.} Remark that in general, the semi-ample part $A$ of the Zariski decomposition is not ample. In this second step we write
  $A$ as limit of ample bundles, and show
  that the solution of the resulting Monge-Amp\`ere equation converges to the singular KE metric.

\item[(c)] \emph{Reduction to the case $c_1(L)\in {\mathbb Z}$.} Let $p$ be a positive integer such that $pL$ is a line bundle. Then we write
  $p(K_X+ L)=  K_X+ (p-1)(K_X+ L)+ L$ and then we replace our initial $\Q$-bundle $L$ with the line bundle $L_p:= (p-1)(K_X+ L)+ L$. The problem is that we also have to replace $h_L$ with a positively curved metric on $L_p$. The metric on $L$ is given. It is less clear what should be the metric on $K_X+ L$, since it has to fulfill two conditions: its curvature must be positive, and in the relative setting (i.e. when we replace $X$ with a fiber of $p$) it must induce a positively curved metric on the twisted relative canonical bundle.
It seems impossible to achieve this in one single step. 
  What \emph{is} possible is to set up an iteration scheme so that the resulting limit coincides with the singular KE metric.

\item[(d)] \emph{If $L (= L_p)$ is a line bundle, show that the singular K\"ahler-Einstein metric corresponding to $(X, L)$ can be obtained as limit of iterated Bergman kernels.} We conclude by this fact, since the fiber-wise Bergman kernel metric
has the required curvature properties specified in (c) above.
  
\end{enumerate}
Also, at each step we establish the relevant convergence
results needed to conclude at the end.

\subsection{Reduction to the case $h_L$ non-singular}

\label{sec:reduction}
Let $(L,h_L=e^{-\phi_L})$ be a hermitian line bundle on a projective variety $X$ such that $K_X+L$ is big and $\phi_L$ is a psh weight with vanishing Lelong numbers.
Let $\phi$ be the weight on $K_X+L$ such that $\om_{\phi}:=\ddc \phi$ is the Kähler-Einstein metric of $(X,L,e^{-\phi_L})$, ie 
$$\Ric(\om_{\phi})=-\om_{\phi}+\ddc\phi_L.$$
Let $H$ be an ample line bundle on $X$, and let $\phi_H$ be a weight on $H$ such that $\ddc \phi_H$ is a Kähler form. Thanks to Demailly regularization theorem, there exists a family of smooth weights $\phi_{L,\ep}$ on $L$ such that
$$\phi_{L,\ep} \downarrow \phi_L \quad \mbox{and} \quad \ddc (\phi_{L,\ep}  +\ep\phi_H)\ge 0.$$
Now, let $\delta\ge \ep$ be a positive number, and let $\phi_{\delta,\ep}$ be the suitably normalized weight on $K_X+L+\delta H$ such that $\om_{\pde}:=\ddc \pde$ is the Kähler-Einstein metric of $(X,L+\delta H,e^{-\phi_{L,\ep}-\delta \phi_H})$, ie 
$$\Ric(\om_{\pde})=-\om_{\pde}+\ddc (\phi_{L,\ep}+\delta \phi_H).$$

\begin{prop}
\label{conver}
With the notation above, there exists a family of positive numbers $(\delta_{\ep})_{\ep>0}$ decreasing to zero such that $\om_{\phi_{\delta_\ep,\ep}}$ converges weakly to $\om_{\phi}$ when $\ep$ approaches zero. 
\end{prop}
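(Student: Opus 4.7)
The plan is to interpose an intermediate family and then diagonalize. For each $\ep>0$, let $\phi_\ep$ denote the Kähler-Einstein weight on $K_X+L$ associated to the data $(X,L,e^{-\phi_{L,\ep}})$, whose existence is ensured by Definition-Proposition~\ref{KElog} (since $\phi_{L,\ep}$ is smooth, $\mathcal{I}(e^{-\phi_{L,\ep}})=\mathcal{O}_X$, so items \ref{fg}-\ref{nef} of that statement hold automatically by Remark~\ref{rem:defi}$(b)$). The proposition will then follow from the two separate weak convergences
$$\omega_{\phi_{\delta,\ep}}\underset{\delta\to 0}{\longrightarrow}\omega_{\phi_\ep}\quad(\ep\text{ fixed}),\qquad \omega_{\phi_\ep}\underset{\ep\to 0}{\longrightarrow}\omega_\phi,$$
together with a standard diagonal extraction yielding $\delta_\ep\downarrow 0$ such that $\omega_{\phi_{\delta_\ep,\ep}}\to\omega_\phi$.

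Both convergences are stability statements for singular Kähler-Einstein metrics in big classes and are modeled on Claims~\ref{claim1}-\ref{claim2} in the proof of Proposition~\ref{klt}. First I would establish a uniform upper bound $\sup_X\phi_{\delta,\ep}\le C$: viewing $\phi_{\delta,\ep}$ as a potential in the big class $c_1(K_X+L+\delta H)$ whose volume is controlled by $\mathrm{vol}(K_X+L+H)$ for $\delta\in(0,1]$, Jensen's inequality applied to
$$\langle(\ddc\phi_{\delta,\ep})^n\rangle = e^{\phi_{\delta,\ep}-\phi_{L,\ep}-\delta\phi_H}\omega^n,$$
together with the sup-normalization trick for quasi-psh functions, gives the bound uniformly in $\delta,\ep$. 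With this in hand, the variational argument of Claim~\ref{claim2} adapts: denoting the relevant Ding functional by $\mathcal G_{\delta,\ep}=\mathcal E_{\delta,\ep}+\mathcal L_{\delta,\ep}$, one extracts a weak limit of the normalized potentials $\phi_{\delta,\ep}-\sup_X\phi_{\delta,\ep}$, uses upper semicontinuity of the energy $\mathcal E_{\delta,\ep}$ across the degenerating class (in the spirit of \cite[Lem.~4.6]{BG}) together with continuity of $\mathcal L_{\delta,\ep}$ under dominated convergence, and identifies the limit as the maximizer of $\mathcal G_\ep$, namely $\phi_\ep$. Exactly the same scheme applied to the limit $\ep\to 0$ yields $\omega_{\phi_\ep}\to\omega_\phi$; here the vanishing-Lelong-number hypothesis enters decisively, since by Skoda's integrability theorem it gives $e^{-\phi_L}\in L^p$ for every $p>1$, whence $e^{-\phi_{L,\ep}}\to e^{-\phi_L}$ in $L^p$ by monotone/dominated convergence, which is precisely what is needed to pass to the limit in the $\mathcal L$-term.

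The main obstacle I anticipate is the first convergence: adapting Claim~\ref{claim2} to a simultaneously varying cohomology class and degenerating density, in particular justifying the upper semicontinuity of $\mathcal E_{\delta,\ep}$ across classes that share the semiample part $A$ only in the limit $\delta=0$. Once both one-parameter limits are established in a metrizable topology on sup-normalized psh weights (say $L^1$-convergence, which forces weak convergence of the Monge-Ampère currents of bounded potentials), the diagonal extraction is routine: for each $\ep>0$ choose $\delta_\ep\le\ep$ so that $\omega_{\phi_{\delta_\ep,\ep}}$ lies within distance $\ep$ of $\omega_{\phi_\ep}$, which itself converges to $\omega_\phi$; one may then replace $(\delta_\ep)$ by $(\min_{\ep'\ge\ep}\delta_{\ep'})$ to enforce monotonic decrease to $0$.
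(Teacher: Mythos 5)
Your overall architecture is essentially the paper's, run in the opposite order: the paper first sends $\ep\to 0$ at fixed $\delta$ (a pure density‑stability statement in a fixed class, quoted from \cite[Thm.~4.2]{GLZ}, using exactly the $L^p$ convergence $e^{-\vp_{L,\ep}}\uparrow e^{-\vp_L}$ that you derive from the vanishing of the Lelong numbers), and then sends $\delta\to 0$ at $\ep=0$; a diagonal extraction finishes. Your order ($\delta\to 0$ first, then $\ep\to 0$) decouples the parameters just as well. The genuine gap is the one you flag yourself: the identification of the limit in the class‑perturbation step $\delta\to 0$. The variational route modeled on Claim~\ref{claim2} does not transfer, because there the classes are $(1+\delta)\{\om_A\}$ with $\om_A$ smooth semipositive, whereas here the classes $c_1(K_X+L)+\delta\{\om_H\}$ are merely big, the potentials only have minimal singularities, and the upper semicontinuity of the energies across such a degenerating family of big classes (the analogue of \cite[Lem.~4.6]{BG}) is precisely what you cannot justify. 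The paper avoids this entirely with an elementary monotonicity argument that you should adopt: since $\om_H\ge 0$, the solution at $\delta=0$ is a subsolution of the equation at $\delta>0$, whence $\vp_{0,\ep}\le\vp_{\delta,\ep}$ and, by the same comparison, $\delta\mapsto\vp_{\delta,\ep}$ is decreasing; the decreasing limit $\vp^*$ is $\theta$-psh with minimal singularities, solves the same Monge--Ampère equation on the ample locus by continuity of the operator along bounded decreasing sequences (hence on all of $X$, as the non-pluripolar product charges no analytic set); finally, equality of the total (full) masses $\int_Xe^{\vp^*-\vp_{L,\ep}}dV=\int_Xe^{\vp_{0,\ep}-\vp_{L,\ep}}dV$ combined with $\vp_{0,\ep}\le\vp^*$ forces $\vp^*=\vp_{0,\ep}$. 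No energy functional is needed, and your uniform $\sup$ bound becomes superfluous for this step.

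Two smaller points. First, your intermediate object $\phi_\ep$ cannot be produced by Definition-Proposition~\ref{KElog} as stated, because that statement requires $\Theta_{h_L}(L)\ge 0$ while $\ddc\phi_{L,\ep}$ is in general only bounded below by $-\ep\,\ddc\phi_H$; the correct justification is to solve the Monge--Ampère equation $\la(\theta+\ddc\psi)^n\ra=e^{\psi-\vp_{L,\ep}}dV$ with minimal singularities directly via \cite{BEGZ}, which only needs the (here bounded) density to be in $L^p$, $p>1$. Second, your diagonal choice $\delta_\ep\le\ep$ points the wrong way: the construction requires $\delta\ge\ep$ so that $\ddc(\phi_{L,\ep}+\delta\phi_H)\ge\ddc(\phi_{L,\ep}+\ep\phi_H)\ge 0$, and this semipositivity of the smooth twist is exactly what Corollary~\ref{reduction} consumes afterwards; since the convergence in $\delta$ is a full limit, one can instead pick $\delta_\ep\ge\ep$ within the admissible threshold and still have $\delta_\ep\downarrow 0$.
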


As an consequence, one gets the following
\begin{coro}
\label{reduction}
It is sufficient to prove Theorem~\ref{thmaa} when $h_L$ is smooth.
\end{coro}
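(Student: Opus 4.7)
The plan is to use the assumed smooth case of Theorem~\ref{thmaa} at the perturbed level and then extract a limit as $\ep\to 0$, the fiberwise convergence being guaranteed by Proposition~\ref{conver}. First I would fix an ample line bundle $H$ on $X$ with a positively curved smooth weight $\phi_H$ and perform a Demailly regularization $\phi_{L,\ep}\downarrow \phi_L$ as in the proposition, so that $\phi_{L,\ep}+\ep\phi_H$ is smooth and psh on $L+\ep H$. For each $\ep>0$ and $\delta\geq\ep$, the data $(p:X\to Y,\,L+\delta H,\,e^{-\phi_{L,\ep}-\delta\phi_H})$ fits Setting~\ref{set}: the $p$-bigness is inherited from that of $K_{X/Y}+L$, and the fiberwise conditions \ref{fg}--\ref{nef} are automatic because the twist metric is smooth with trivial multiplier ideal, by Remark~\ref{rem:defi}(b). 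The assumed smooth case of Theorem~\ref{thmaa} therefore supplies a psh weight $\phi_{\delta,\ep}$ on $K_{X/Y}+L+\delta H$ over $X°$, extending canonically to all of $X$, whose restriction to every smooth fiber $X_y$ is the Kähler-Einstein weight of $(X_y,(L+\delta H)_y,e^{-\phi_{L,\ep}-\delta\phi_H}|_{X_y})$.

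Next I would pick $\delta=\delta_\ep\downarrow 0$ obtained by a diagonal extraction from Proposition~\ref{conver} applied fiberwise. Normalizing the weights by $\sup_{X_{y_0}}\phi_{\delta_\ep,\ep}=0$ at a fixed base point $y_0\in Y°$, the uniform fiberwise upper bound of Proposition~\ref{extension}---valid here because the relative volumes $\mathrm{vol}(K_{X_y}+L_y+\delta_\ep H_y)$ remain bounded---combined with the standard compactness of families of normalized quasi-psh functions provides a subsequential weak limit $\phi$. Since the cohomology classes $c_1(K_{X/Y}+L+\delta_\ep H)$ converge to $c_1(K_{X/Y}+L)$, the limit $\phi$ is a psh weight on $K_{X/Y}+L$ over $X°$. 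By Proposition~\ref{conver} applied on each fiber, $\phi|_{X_y}$ coincides with the Kähler-Einstein weight of $(X_y,L_y,h_L|_{X_y})$ for every $y\in Y°$, where the uniqueness statement in Definition-Proposition~\ref{KElog} is crucial. This establishes Item~\ref{main11} of Conjecture~\ref{conj} for $(X/Y,L,h_L)$, and Item~\ref{main12} then follows from Proposition~\ref{extension}.

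The principal difficulty is transferring the fiberwise weak convergence of Proposition~\ref{conver} into a genuine convergence of global weights. One must verify that the global normalization does not destroy fiberwise convergence to the KE weight, and that every subsequential weak limit of $(\phi_{\delta_\ep,\ep})$ restricts to the KE weight on each $X_y$ rather than merely on a measure-theoretic dense subset of $Y°$. This should follow from a Fubini-type argument for relative quasi-psh weights combined with the uniqueness clause in Definition-Proposition~\ref{KElog}, but it is the only nontrivial point in what is otherwise a formal deduction from Proposition~\ref{conver} and the smooth case of Theorem~\ref{thmaa}.
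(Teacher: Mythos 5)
Your argument is essentially the paper's: regularize $h_L$ by Demailly approximation twisted by $\delta H$, invoke the smooth case of Theorem~\ref{thmaa} for $(L+\delta H, e^{-\phi_{L,\ep}-\delta\phi_H})$, and pass to the weak limit along the diagonal sequence $\delta_\ep$ furnished by Proposition~\ref{conver}, positivity being preserved under weak convergence. The only superfluous step is the sup-normalization at $y_0$: since the Kähler--Einstein equation $\la(\ddc\phi)^n\ra=e^{\phi-\phi_L}$ already fixes the additive constant of each fiberwise weight, Proposition~\ref{conver} identifies the limit on every fiber of $X°$ without renormalizing, which dissolves the difficulty you flag at the end.
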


\begin{proof}[Proof of Proposition~\ref{conver}]
Let us start by setting some additional notation. Let $\theta$ (resp $\theta_L$) be a closed smooth $(1,1)$-form in the cohomology class $c_1(K_X+L)$ (resp. $c_1(L)$). Let $\om_H:=\ddc \phi_H$ and let $dV$ be a smooth volume form such that $-\Ric(dV)+\theta_L=\theta$. Finally, let $\vp_L,\vp_{L,\ep}$ be some quasi-psh functions such that $\theta_L+\ddc \vp_L=\ddc \phi_L$ (resp. $\theta_L+\ddc \vp_{L,\ep}=\ddc \phi_{L,\ep}$) and satisfying additionnally that $\vp_{L,\ep}\downarrow \vp_L$ when $\ep \downarrow 0$. Let $\vpde$ be the unique $(\theta+\delta \om_H)$-psh function with minimal singularities solution of 
$$(\theta+\delta \om_H+\ddc \vpde)^n=e^{\vpde-\vp_{L,\ep}}dV$$
whose existence is guaranteed by \cite{BEGZ} (cf. also \cite[Thm.~2.2]{G2}). When $\ep=0$, one writes $\vpd:=\vp_{\delta, 0}$, and one sets $\vp:=\vp_0$. Note that $\theta+\ddc \vp$ is the Kähler-Einstein metric of $(X,L,e^{-\phi_L})$.

For the time being, let $\delta>0$ be fixed. As $\vp_{L,\ep}$ decreases toward $\vp_L$ and $\vp_L$ has vanishing Lelong number, the convergence $e^{-\vp_{L,\ep}}\uparrow e^{-\vp} $ happens in any $L^p$ space for $p>0$. In particular, if follows from \cite[Thm.~4.2]{GLZ} that $\vpde$ converges weakly to $\vpd=\vp_{\delta, 0}$ when $\ep$ approaches zero. 

As $\om_H\ge 0$, the $\theta$-psh function $\vp$ is also $\theta+\delta \om_H$-psh and it is a subsolution of the equation
$$(\theta+\delta \om_H+\ddc \psi)^n=e^{\psi-\vp_{L}}dV$$
so one gets 
\begin{equation}
\label{min}
\vp\le \vpd
\end{equation}
for any $\delta>0$. Moreover, the same argument shows that $\vpd$ decreases when $\delta \downarrow 0$. Let $\vp^*:=\lim_{\delta \to 0} \vpd$. If we can prove that $\vp^*=\vp$, then we will be done. 

From \eqref{min}, one can deduce two things. First, $\vp^*$ is a $\theta$-psh function with minimal singularities. Also, the sequence $(\vpd)_{\delta>0}$ is locally bounded on the ample locus $\Omega$ of $K_X+L$. Because the Monge-Ampère operator is continuous with respect to bounded decreasing sequences, one finds that 
$$(\theta+\ddc \vp^*)^n=e^{\vp^*-\vp_{L}}dV \quad \mbox{ on } \Omega.$$
As the non-pluripolar Monge-Ampère operator does not put any mass to analytic sets, it follows that the previous equation is satisfied on the whole $X$. As $\vp$ and $\vp^*$ have minimal singularities, the currents $\theta+\ddc \vp$  and $\theta+\ddc \vp^*$ have full mass (almost by definition, cf remarks below \cite[Def.~2.1]{BEGZ}) and therefore
\begin{align*}
\int_X e^{\vp^*-\vp_{L}}dV &= \int_X (\theta+\ddc \vp^*)^n \\
&=\int_X (\theta+\ddc \vp)^n\\
&= \int_X e^{\vp-\vp_{L}}dV
\end{align*}
As $\vp \le \vp^*$ by \eqref{min}, we see that $\vp=\vp^*$ almost everywhere. As both functions are $\theta$-psh, they must agree on $X$. 
\end{proof}



\subsection{The approximation of $A$}
\label{sec:approx}
A first remark is that thanks to Lemma~\ref{zar} and Corollary~\ref{reduction}, one can assume that $h_L$ is smooth and that $K_X+L$ admits a relative Zariski decomposition over $X°$ (which denotes here the inverse image of a well-chosen open subset of $Y$),
$$K_X+L=A+E$$  

By Kodaira lemma, there exists an effective $\Q$-divisor $E_X$ such that
$A- E_X$ is $p$-ample. As $Y$ is chosen to be Stein, one can assume that
$A-E_X$ is globally ample. Therefore for each positive, small enough $\delta$ we
have
\begin{eqnarray}
K_X+L&=&A+E \label{decomposition}\\
&=&A_{\delta}+E_{\delta} \nonumber
\end{eqnarray}
where $A_{\delta} := (1-\delta) A+\delta (A-E_X)$ is ample and $E_{\delta}  := E +\delta E_X$ for any $\delta>0$.

\medskip

\noindent {\bf Convention.} \emph{For the rest of this subsection our results will exclusively concern the fibers $X_y$ of $p$. 
Since $y\in Y°$ is fixed, we will denote $X_y$ by $X$ and drop the index $y$ in the relevant line bundles and weights that will be considered here.}
\medskip

\subsubsection{Notations.}\label{not}

Let $\om_A$ in $c_1 (A)$ be a smooth,
semi-positive representative. We denote by $E:= \sum_{i=1}^k a_i E_i$ and let $E_X:=
\sum_{i=1}^k c_i E_i$ the divisors above where some of $a_i, c_i$ could be zero. 
 Since $\{A-E_X\}$ is a K\"ahler class we can fix a Kähler form
$\omega_0 \in \{A- E_X\}$. For each positive $\delta$ 
we obtain a Kähler form $$\om_{\delta}:= (1-\delta) \om_A+\delta \om_0 \in c_1(A_{\delta})$$
\noindent where $A_\delta:= A- \delta E_X$. We write
$$ E_{\delta}=\sum_{i=1}^k a_i^{\delta} E_i,$$
where 
\begin{equation}
\label{not2}
a_i^{\delta}=a_i+\delta c_i
\end{equation}  

Let $s_i$
be a defining section for $E_i$ and let
$h_{E_i}=e^{-\rho_i}$ be a non-singular Hermitian metric on $\mathcal O_X(E_i)$.
We obtain the metrics
$h_E =\prod h_{E_i}^{a_{i}}$ and $h_{E_{\delta}}=\prod h_{E_i}^{a_i^{\delta}}$ on $E$ and $E_{\delta}$ respectively.

We define 
$$|s_{E_\delta}|^{2\{\ell p\}}:=\prod_i |s_{E_i}|^{2 (\lceil \ell p a_i^{\delta}\rceil-
  \ell p a_i^{\delta})}$$
where $|s_{E_i}|^2$ denotes the squared norm of $s_{E_i}$ with respect to $h_i$.

\medskip

\noindent
The Kähler-Einstein metric $\om_{\vp}=\om_A+\ddc \vp$ of $(X,L, \phi_{L})$ satisfies the following Monge-Ampère equation on $X$:
\begin{equation}
\label{MAf}
(\om_A+\ddc \vp)^n = |s_E|^2e^{\vp- f_L} \om^n
\end{equation}
where $\om$ is a reference K\"ahler metric on $X$ and $f_L$ is the unique smooth function on $X$ such that
\begin{equation}
\label{mp33}
\om_A+ \Theta(E)+ dd^cf_L= \Theta_{h_L}(L)+ \Theta_{\om}(K_X),  \qquad \int_XfdV_{\omega}= 0.
\end{equation}

\medskip

\noindent It will be convenient for later to fix some notations for the local expression of the objects above. Let $U\subset X$ be an open coordinate
subset such that the $\Q$-bundles above are trivial when restricted to $U$.

Let $\rho_i$ be the local weight of the metric $h_i$ with respect to a trivialization of $\mathcal O_X(E_i)|_U$. We will use the notation
$$\rho_{E_\delta}:= \sum a^\delta_i\rho_i$$
for the weight of the induced metric on $E_\delta$.


In a similar manner we introduce $\phi_A, \phi_0, \phi_L$ on $A, A- E_X$ and $L$ respectively, such that $\ddc \phi_A=\om_A$ and $\ddc \phi_0=\om_0$. Finally we consider for $\delta >0$
\begin{equation}
\label{phiad}
\phi_{A_{\delta}}:=(1-\delta) \phi_A+\delta \phi_0.
\end{equation}

We assume that the metrics $h_i$ are chosen such that $\phi_0=\phi_A-\delta \sum_i c_i \rho_{i}$.
\medskip

\noindent Expressed in terms of local weights and coordinates, the equality \eqref{mp33} becomes 
\begin{equation}
\label{mp63}
\phi_A+ \phi_E+ f_L= \phi_L+ \log\det(\omega_{\alpha\overline\beta})
\end{equation}
modulo a pluri-harmonic function on $U$. We see that
we are free to choose the trivialization of  $E$ and $L$ together with a coordinate system $(z_i)$ such that \eqref{mp63} becomes an equality by modifying 
the weights $\phi_A$.

\subsubsection{The approximation statement}
For $\delta,\ep>0$, Aubin-Yau theorem shows that the equation

\begin{equation}
\label{MAed}
(\om_{\delta}+\ddc \vp_{\delta,\ep})^n = (|s_{E}|^2+\ep^2)e^{\vp_{\delta,\ep}- f_L}  \om^n
\end{equation}
has a unique solution such that $\displaystyle \om_{\delta}+\ddc \vp_{\delta,\ep}$
is a smooth Kähler metric. In the two equations above, $|s_E|^2$ (resp. $|s_{E}|^2+\ep^2$) has to be interpreted as $\prod_i |s_i|^{2a_i}$ (resp. $\prod_i (|s_i|^2+\ep^2)^{a_i}$). We have the following convergence result.

\begin{prop}
\label{convv}
There exists a family of positive numbers $(\delta_{\ep})_{\ep>0}$ decreasing to $0$ when $\ep$ approaches zero such that
$$\lim_{\ep\to 0} ||\vp_{\delta_{\ep},\ep}-\vp||_{L^{\infty}(X)} =0.$$
\end{prop}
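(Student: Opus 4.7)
The strategy is a two-parameter diagonal argument. I will first establish, for each fixed $\delta > 0$, that $\vp_{\delta,\ep}\to \vp_{\delta}$ in $L^\infty(X)$ as $\ep\to 0$, where $\vp_\delta$ is the unique bounded $\om_\delta$-psh solution of
$$(\om_\delta+\ddc\vp_\delta)^n = |s_E|^2 e^{\vp_\delta - f_L}\om^n.$$
Then I will show $\vp_\delta\to \vp$ in $L^\infty(X)$ as $\delta\to 0$, where $\vp$ is the solution of \eqref{MAf}. A diagonal extraction $\delta_\ep\downarrow 0$ yields the result: for each $\ep$, choose $\delta_\ep\in (0,\ep)$ so that $\|\vp_{\delta_\ep,\ep}-\vp_{\delta_\ep}\|_{L^\infty}\le \ep$ (possible by the first step applied with $\delta=\delta_\ep$), and combine with the second step.

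\emph{Step 1 (fixed $\delta$, $\ep\to 0$).} Here $\om_\delta$ is Kähler since $A_\delta$ is ample by construction. The densities $(|s_E|^2+\ep^2)e^{-f_L}\om^n$ are uniformly bounded in $L^\infty(X)$ and converge in $L^p(X)$ for every $p\ge 1$ to $|s_E|^2 e^{-f_L}\om^n$ as $\ep\to 0$. Evaluating \eqref{MAed} at the maximum and minimum points of $\vp_{\delta,\ep}$ (where $\ddc\vp_{\delta,\ep}\le 0$, resp. $\ge 0$) together with the boundedness of $|s_E|^2$ and $f_L$ produces a uniform bound $\|\vp_{\delta,\ep}\|_{L^\infty(X)}\le C$ that is independent of both $\ep$ and $\delta$. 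The Kolodziej / \cite{EGZ} stability estimates then upgrade the $L^p$ convergence of the RHS into $L^\infty$ convergence $\vp_{\delta,\ep}\to\vp_\delta$; the exponential factor $e^{\vp_{\delta,\ep}}$ in the RHS pins down the normalization, so no constants have to be added.

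\emph{Step 2 (main obstacle: $\delta\to 0$).} The class $c_1(A_\delta)$ degenerates to the merely semi-positive and big class $c_1(A)$, so classical Kolodziej estimates no longer apply in a uniform way. The same maximum principle argument still gives $\|\vp_\delta\|_{L^\infty(X)}\le C$ uniformly in $\delta$. To pass to the limit, I will argue as in Claim~\ref{claim2}: extract a subsequence along which $\vp_\delta$ converges weakly to a bounded $\om_A$-psh function $\vp^*$; by continuity of the Bedford-Taylor Monge-Ampère operator along uniformly bounded quasi-psh families, $\vp^*$ satisfies $(\om_A+\ddc\vp^*)^n = |s_E|^2 e^{\vp^*-f_L}\om^n$ on the ample locus $\Omega$ of $A$, and hence on all of $X$ in the non-pluripolar sense (the operator $\la\cdot^n\ra$ puts no mass on analytic sets). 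The uniqueness of minimal-singularity solutions in the big class $c_1(K_X+L)$ granted by Definition-Proposition~\ref{KElog} forces $\vp^*=\vp$. The expected main technical obstacle is upgrading this weak convergence to $L^\infty$ convergence; this is where I anticipate spending the bulk of the effort, using a Dinew-type $L^\infty$ stability for bounded solutions of MA equations in big classes (or alternatively a variational argument along the lines of the proof of Claim~\ref{claim2}, combining \cite[Thm.~4.2]{GLZ} with the uniform $L^\infty$ bound on $\vp_\delta$), to conclude the uniform convergence.
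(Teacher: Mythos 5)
Your overall architecture (fixed $\delta$ with $\ep\to 0$, then $\delta\to 0$, then a diagonal extraction) matches the paper's, and Step 1 is essentially what the paper does by quoting \cite[Thm.~A]{GLZ} — though note that your maximum-principle upper bound on $\sup_X\vp_{\delta,\ep}$ does not work as stated: at the maximum point one only gets $e^{\vp_{\delta,\ep}}\le \om_\delta^n e^{f_L}/\big((|s_E|^2+\ep^2)\om^n\big)$, which blows up when the maximum point lies on $E$ and $\ep\to 0$; the uniform sup bound has to come from the integral normalization $\int_X e^{\vp_{\delta,\ep}}(|s_E|^2+\ep^2)e^{-f_L}\om^n=(A_\delta^n)$ plus compactness of normalized quasi-psh functions, as in the proof of Claim~\ref{claim1}.

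The genuine gap is in Step 2, and it occurs twice. First, you extract a \emph{weakly} convergent subsequence $\vp_\delta\rightharpoonup\vp^*$ and then invoke ``continuity of the Bedford--Taylor Monge--Ampère operator along uniformly bounded quasi-psh families''. No such continuity holds: the Monge--Ampère operator is continuous along monotone or locally uniform sequences of bounded potentials, but not along weakly convergent ones, even with a uniform $L^\infty$ bound. Second, the upgrade from weak to $L^\infty$ convergence — which you correctly identify as the crux — is left to an unspecified ``Dinew-type stability'', but the standard stability theorems do not apply here because the cohomology class itself varies with $\delta$ (from the Kähler class $c_1(A_\delta)$ degenerating to the merely semipositive and big $c_1(A)$), and the density $|s_E|^2e^{-f_L}$ vanishes along $E$. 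The paper resolves both problems at once by a single structural device you are missing: after the renormalization $\psi_\delta=\frac{1}{1-\delta}(\vp_{\delta,0}-n\log(1-\delta))$, a subsolution/comparison argument shows that $\psi_\delta-\frac{K\delta}{1-\delta}$ is \emph{monotone decreasing} in $\delta$. Monotonicity legitimizes the passage to the limit in the Monge--Ampère equation (continuity along bounded decreasing sequences), identifies the limit with $\vp$ by uniqueness, and — combined with the continuity of $\vp$ itself, which is a nontrivial input from \cite[Thm.~A]{EGZ} and \cite[Cor.~C]{CGZ} since $\om_A$ is only the pull-back of a Hodge form — yields uniform convergence by Dini's theorem. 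Without producing this monotonicity (or some genuine substitute for it), your Step 2 does not close.
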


\begin{proof}
For now, let $\delta\in(0,1)$ be fixed. By \cite[Thm.~A]{GLZ}, one has 
\begin{equation}
\label{unif}
\limsup_{\ep\to 0} || \vp_{\delta,\ep}-\vp_{\delta, 0}||_{L^{\infty}(X)} =0.
\end{equation}

\noindent
Let $$\psi_{\delta}:=\frac{1}{1-\delta}\Big(\vp_{\delta,0}-n\log(1-\delta)\Big).$$
The above function satisfies the Monge-Ampère equation
\begin{equation}
\label{mapsi}
(\om_A+\frac{\delta}{1-\delta}\om_0+\ddc \psi_{\delta})^n=|s_E|^2e^{(1-\delta)\psi_{\delta}- f_L}\om^n.
\end{equation}
The $\om_A$-psh function $\hat\psi_{\delta}:=\psi_0+\frac{\delta}{1-\delta}\cdotp \inf_X \psi_0$ is a subsolution of \eqref{mapsi}.  Indeed, one has
\begin{align*}
\Big(\om_A+\frac{\delta}{1-\delta}\om_0+\ddc \hat \psi_{\delta}\Big)^n& \ge (\om_A+\ddc \psi_{0})^n\\
& = |s_E|^2e^{(1-\delta) \hat \psi_{\delta}- f_L}e^{\delta(\psi_0-\inf_X \psi_0)}\om^n\\
& \ge |s_E|^2e^{(1-\delta) \hat \psi_{\delta}- f_L}\om^n.
\end{align*}
Therefore, one gets $\hat \psi_{\delta}\le  \psi_{\delta}$, ie 
$$\psi_0 \le \psi_{\delta}-\frac{\delta}{1-\delta}\cdotp \inf_X \psi_0.$$
In particular, one finds a uniform lower bound $\psi_{\delta}\ge -C$ where $C>0$ is independent of $\delta$. Let $K:=\min\{0, \inf_{\delta}\inf_X \psi_{\delta}\}$ where the first infimum ranges over $\delta \in [0,e^{-1}]$ say. Using the same argument as above, one concludes that for any $0\le \eta \le \delta \le e^{-1}$, one has
$$\psi_{\eta}-\frac{K\eta}{1-\eta} \le \psi_{\delta}-\frac{K\delta}{1-\delta}.$$
That is, the family of $\om_A$-psh functions $(\psi_{\delta}-\frac{K\delta}{1-\delta})_{\delta>0}$ decreasing toward a bounded $\om_A$-psh $\tilde \psi_0$ when $\delta \downarrow 0$. The function $\tilde \psi_0$ satisfies the same Monge-Ampère equation \eqref{mapsi} as $\psi_0$ thanks to the continuity of the Monge-Ampère operator with respect to bounded decreasing sequences. Therefore, one has $\tilde \psi_0=\psi_0=\vp.$ 

 Now, the $\om_A$-psh function $\vp$ is continuous. Indeed, this is because $\om_A$ is the pull-back of a Hodge form on a (singular) space by a birational morphism, hence one can apply jointly \cite[Thm.~A]{EGZ} and \cite[Cor.~C]{CGZ}. All in all, Dini's theorem shows that the convergence $\psi_{\delta}-\frac{K\delta}{1-\delta}\to \vp$ is uniform. In particular, $\vp_{\delta, 0}$ converges uniformly to $\vp$ when $\delta \to 0$. The Proposition now follows from \eqref{unif} and a suitable diagonal process. 
\end{proof}




\subsection{Reduction to the case $c_1(L)\in H^2(X, \mathbb Z)$}
\label{sec:iteration} 

\bigskip
We fix an integer $p\ge 1$ such that $pE$ is integral and $pL$ is a line bundle. The first step in the algorithm which will follow consists in solving the equation
\begin{equation}
\label{IMA1}
(p\om_{\delta}+\ddc \vp_{1, \delta,\ep})^n = e^{\vp_{1, \delta, \ep}- f_L} (|s_{E}|^2+\ep^2)\om^n
\end{equation}
This is very similar to \eqref{MAed}. In particular, one can apply Proposition~\ref{convv} to show that there exists a family of numbers $(\delta_{\ep}^{(1)})_{\ep>0}$ decreasing to zero when $\ep \downarrow 0$ such that 
$$\limsup_{\ep\to 0}|| \vp_{1, \delta_{\ep}^{(1)},\ep}-\vp_{1,0,0}||_{L^{\infty}(X)}=0.$$
One sets $\vp_{1,\ep}:= \vp_{1, \delta_{\ep}^{(1)},\ep}$ and $\vp_1:=\vp_{1,0,0}$. Next, one solves the equation
\begin{equation}
\label{IMA1}
(p\om_{\delta}+\ddc \vp_{2, \delta,\ep})^n = e^{\vp_{2, \delta, \ep}-\frac{p-1}{p} \vp_{1,\ep}- f_L} (|s_{E}|^2+\ep^2)\om^n
\end{equation}
Proposition~\ref{convv} applies again verbatim to show that there exists a family of numbers $(\delta_{\ep}^{(2)})_{\ep>0}$ decreasing to zero when $\ep \downarrow 0$ such that 
$$\limsup_{\ep\to 0}|| \vp_{2, \delta_{\ep}^{(2)},\ep}-\vp_{2,0,0}||_{L^{\infty}(X)}=0.$$
We set $\vp_{2,\ep}:= \vp_{2, \delta_{\ep}^{(2)},\ep}$, $\vp_2:=\vp_{2,0,0}$ and repeat the procedure. The result is the following. 

\begin{prop}
\label{iteratiooon}
For each integer $m\ge 1$ there exist a family of positive reals $(\delta_{\ep}^{(m)})_{\ep>0}$ decreasing to zero and a family of smooth strictly $p\om_{\delta_{\ep}^{(m)}}$-psh functions $\vp_{m,\ep}$ such that 
\begin{equation}
\label{IMA}
(p\om_{\delta_{\ep}^{(m)}}+\ddc \vp_{m,\ep})^n = e^{\vp_{m, \ep}- \frac{p-1}{p}\vp_{m-1, \ep}- f_L} (|s_{E}|^2+\ep^2)\om^n
\end{equation}
and 
\begin{equation}
\label{convv2}
\limsup_{\ep\to 0}|| \vp_{m,\ep}-\vp_{m}||_{L^{\infty}(X)}=0.
\end{equation}
where $\vp_m$ are the unique $p\om_A$-psh bounded functions such that $\vp_0=0$ and 
\begin{equation}
\label{IMA2}
(p\om_A+\ddc \vp_{m})^n = |s_{E}|^2 e^{\vp_{m}- \frac{p-1}{p}\vp_{m-1}- f_L}\om^n
\end{equation}
\end{prop}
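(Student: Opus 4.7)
The plan is to proceed by induction on $m \ge 0$, the base case $m=0$ being vacuous since $\vp_0 = 0$. Assume by induction that we have already constructed $\vp_{m-1,\ep}$ together with a bounded and continuous $p\om_A$-psh function $\vp_{m-1}$ solving \eqref{IMA2} at step $m-1$, with $\limsup_{\ep\to 0}\|\vp_{m-1,\ep}-\vp_{m-1}\|_{L^\infty(X)}=0$. For fixed $\delta,\ep>0$, the equation
\[
(p\om_\delta + \ddc \vp_{m,\delta,\ep})^n = e^{\vp_{m,\delta,\ep}-\frac{p-1}{p}\vp_{m-1,\ep}-f_L}(|s_E|^2+\ep^2)\om^n
\]
has smooth, positive right-hand side (since $\vp_{m-1,\ep}$ is smooth) and Kähler reference form $p\om_\delta$; Aubin--Yau thus supplies a unique smooth strictly $p\om_\delta$-psh solution $\vp_{m,\delta,\ep}$.

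For the limit candidate $\vp_m$, observe that, by induction, $\vp_{m-1}$ is bounded, so the measure $\mu_m := |s_E|^2 e^{-\frac{p-1}{p}\vp_{m-1}-f_L}\om^n$ has bounded density with respect to $\om^n$. The main theorem of \cite{BEGZ} then provides a unique bounded $p\om_A$-psh function $\vp_m$ with minimal singularities satisfying $\la (p\om_A+\ddc \vp_m)^n \ra = e^{\vp_m}\mu_m$, which is exactly \eqref{IMA2}. Since $p\om_A$ is the pull-back of a Hodge form on a (singular) space via a birational morphism, thanks to the relative Zariski decomposition \eqref{decomposition} and the relative semiampleness of $A$, the joint application of \cite[Thm.~A]{EGZ} and \cite[Cor.~C]{CGZ} yields the continuity of $\vp_m$ on $X$, which propagates the induction hypothesis.

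The convergence $\|\vp_{m,\ep}-\vp_m\|_{L^\infty(X)} \to 0$ is then obtained by applying the two-step limiting scheme of the proof of Proposition~\ref{convv}, treating the additional factor $e^{-\frac{p-1}{p}\vp_{m-1,\ep}}$ as a perturbation which converges uniformly to $e^{-\frac{p-1}{p}\vp_{m-1}}$. First fix $\delta>0$ and let $\ep\to 0$: since $(|s_E|^2+\ep^2)\to |s_E|^2$ in every $L^p$-space and $\frac{p-1}{p}\vp_{m-1,\ep}\to \frac{p-1}{p}\vp_{m-1}$ uniformly, \cite[Thm.~A]{GLZ} gives $\limsup_{\ep\to 0}\|\vp_{m,\delta,\ep}-\vp_{m,\delta,0}\|_{L^\infty(X)}=0$, where $\vp_{m,\delta,0}$ is the bounded $p\om_\delta$-psh solution of the corresponding $\ep=0$ equation. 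Next let $\delta\to 0$: setting $\psi_\delta := (1-\delta)^{-1}(\vp_{m,\delta,0}-n\log(1-\delta))$, a verbatim copy of the subsolution/comparison argument from Proposition~\ref{convv} shows that $\vp_m$ is a subsolution of the $\psi_\delta$-equation, yielding a uniform lower bound and monotonicity of $(\psi_\delta - K\delta/(1-\delta))_{\delta>0}$. Passing to the decreasing limit and using continuity of the non-pluripolar Monge--Ampère operator along bounded decreasing sequences together with uniqueness of the bounded solution of \eqref{IMA2}, this limit must equal $\vp_m$. Continuity of $\vp_m$ combined with Dini's theorem upgrades the convergence to uniform convergence, and a diagonal extraction produces the desired family $\delta_\ep^{(m)}$.

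The principal difficulty is ensuring that the continuity of $\vp_m$ is preserved at every step of the induction, as it is indispensable for Dini's theorem in the $\delta\to 0$ passage. This propagation is secured by the EGZ--CGZ regularity theory applied to the semipositive big class $pA$, provided the density of $\mu_m$ is bounded; this in turn requires the boundedness of $\vp_{m-1}$, which is precisely the boundedness output of the preceding induction step. The rest is essentially a bookkeeping exercise in iterating Proposition~\ref{convv}.
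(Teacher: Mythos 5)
Your proposal is correct and follows essentially the same route as the paper, which presents Proposition~\ref{iteratiooon} precisely as the outcome of iterating Proposition~\ref{convv} (Aubin--Yau at each $(\delta,\ep)$, the $\ep\to 0$ step via \cite[Thm.~A]{GLZ}, the $\delta\to 0$ step via the subsolution/Dini argument, then a diagonal extraction). The only added value in your write-up is making explicit the two points the paper leaves implicit — that the term $e^{-\frac{p-1}{p}\vp_{m-1,\ep}}$ is a uniformly convergent perturbation of the density, and that boundedness/continuity of $\vp_{m-1}$ must be propagated through the induction — both of which are handled correctly.
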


\bigskip

Thanks to \eqref{decomposition}, one gets for each integer $m\ge 1$ a decomposition 
\begin{equation}
\label{decomposition2}
K_X+L=A_{\delta_{\ep}^{(m)}}+E_{\delta_{\ep}^{(m)}}.
\end{equation}
and one can define the weights
\begin{equation}
\label{phiad3}
\phi_m:=p\phi_A+\vp_m, \quad \phi_{m,\ep}:=p\phi_{A_{\delta_{\ep}^{(m)}}}+\vp_{m,\ep}
\end{equation}
on $pA$ and $pA_{\delta_{\ep}^{(m)}}$ respectively, cf \eqref{phiad}. Let $\phi_E$ be a singular weight on $E$ such that $\ddc \phi_E=[E]$, and let $\phi_L$ a smooth weight on $L$ such that $\ddc \phi_L=i\Theta_{h_L}(L)$. \label{enditeration}The expressions $e^{\phi_m-\frac{p-1}p \phi_{m-1}+\phi_E-\phi_L}$ define a global volume form which we normalize (by adding a constant to $\phi_L$) such that
\begin{equation}
\label{norm}
\int_Xe^{\phi_m-\frac{p-1}p \phi_{m-1}+\phi_E-\phi_L}d\lambda=(A^n).
\end{equation} 
If follows from \eqref{IMA2} combined with the definition of $f_L$ cf. \eqref{mp33} that $\phi_m$ solves
 \begin{equation}
\label{IMA3}
(\ddc \phi_m)^n =e^{\phi_m-\frac{p-1}p \phi_{m-1}+\phi_E-\phi_L}d\lambda.
\end{equation}

\subsection{Convergence of the Ricci iteration}
The current $\om_m:=\ddc \phi_m=p\om_A+\ddc \vp_m$, satisfies the following twisted Kähler-Einstein-like equation:
\begin{equation}
\label{IKE}
\Ric \om_m = -\om_m+ \frac{p-1}{p}\om_{m-1}-[E]+  i\Theta_{h_{L}}(L) .
\end{equation}
Its behavior when $m\to +\infty$ is given by the following result. 
\begin{prop}\label{convricci}
When $m$ tends to $+\infty$, the current $\frac{1}{p} \om_m$ converges weakly to the (unique) twisted Kähler-Einstein metric $\omi\in c_1(A)$ solution of 
$$- \Ric \omi +i \Theta_{h_{L}}(L)= \omi+[E]$$
\end{prop}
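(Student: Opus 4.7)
The plan is to establish uniform convergence of the normalized potentials $u_m := \vp_m/p$ to a bounded $\om_A$-psh function $u_\infty$ solving a fixed-point Monge--Ampère equation; the desired weak convergence $\tfrac{1}{p}\om_m = \om_A + \ddc u_m \to \om_A + \ddc u_\infty =: \omi$ will then follow from the continuity of the Monge--Ampère operator along uniform limits. Dividing the iteration \eqref{IMA2} by $p^n$ rewrites it as
\[
(\om_A + \ddc u_m)^n = \frac{|s_E|^2}{p^n}\,e^{p u_m - (p-1)u_{m-1} - f_L}\,\om^n,
\]
so the candidate limit equation is $(\om_A + \ddc u_\infty)^n = \frac{|s_E|^2}{p^n}\,e^{u_\infty - f_L}\,\om^n$. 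A direct Ricci computation, combined with the defining relation \eqref{mp33} for $f_L$ and the Poincaré--Lelong formula for $\log|s_E|^2$, then shows that $\omi$ automatically satisfies $-\Ric\omi + i\Theta_{h_L}(L) = \omi + [E]$.

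The heart of the argument is a geometric contraction estimate for the increments $h_m := u_m - u_{m-1}$. Taking the ratio of two consecutive equations of the iteration yields
\[
\frac{(\om_A + \ddc u_m)^n}{(\om_A + \ddc u_{m-1})^n} \;=\; e^{p h_m - (p-1) h_{m-1}}.
\]
Applying the Bedford--Taylor comparison principle to the bounded $\om_A$-psh pair $u_m$ and $u_{m-1} + \inf h_m + \delta$ on the non-empty open sublevel set $\{h_m < \inf h_m + \delta\}$, and letting $\delta \to 0^+$, one derives $p\inf h_m \ge (p-1)\inf h_{m-1}$; the symmetric argument at the supremum gives $p\sup h_m \le (p-1)\sup h_{m-1}$. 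Together these two estimates combine to give
\[
\|h_m\|_{L^\infty(X)} \;\le\; \frac{p-1}{p}\,\|h_{m-1}\|_{L^\infty(X)},
\]
and hence $\|h_m\|_\infty \le \bigl(\tfrac{p-1}{p}\bigr)^{m-1}\|h_1\|_\infty$. (The case $p=1$ is trivial since the iteration is then stationary.)

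The geometric summability of $(h_m)$ makes $(u_m)$ a Cauchy sequence in $L^\infty(X)$, so $u_m \to u_\infty$ uniformly for some bounded $\om_A$-psh function $u_\infty$. By continuity of the Bedford--Taylor Monge--Ampère operator along uniform limits of bounded psh functions, $(\om_A+\ddc u_m)^n \to (\om_A + \ddc u_\infty)^n$ weakly; meanwhile the right-hand side of the iteration converges in $L^1$ by dominated convergence thanks to the uniform bounds on $u_m$. Hence $u_\infty$ is a bounded solution of the fixed-point equation, $\frac{1}{p}\om_m$ converges weakly to $\omi$, and uniqueness of this twisted Kähler--Einstein metric is a standard consequence of the comparison principle applied to the Monge--Ampère equation it satisfies.

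The main delicate point lies in deploying the comparison principle rigorously in the bounded (non-smooth) pluripotential framework, given that $(\om_A + \ddc u_{m-1})^n$ vanishes along $\mathrm{Supp}(E)$. This is handled via the standard Bedford--Taylor comparison/domination principle, together with the observation that $\mathrm{Supp}(E)$ has empty interior, so that every non-empty open sublevel set of $h_m$ carries positive $(\om_A + \ddc u_{m-1})^n$-mass, allowing the strict inequality needed for the max/min contraction to bite.
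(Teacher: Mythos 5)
Your argument is correct and is essentially the paper's own proof: the same comparison-principle contraction $\|h_m\|_{L^\infty}\le\frac{p-1}{p}\|h_{m-1}\|_{L^\infty}$ (the paper states it for $\vp_m-\vp_{m-1}$ rather than the normalized $u_m-u_{m-1}$), followed by geometric summability, continuity of the Bedford--Taylor operator under uniform convergence, and identification of the limit equation. The one imprecision is that $\{h_m<\inf h_m+\delta\}$ need not be open, since $h_m$ is a difference of quasi-psh functions; to conclude that this set is empty one should invoke the domination principle, as the paper does (or first establish continuity of the potentials via Ko\l odziej-type estimates), rather than argue that nonempty open sets carry positive Monge--Amp\`ere mass.
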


\begin{rema}
\label{rmk:relation}
{\rm
We see that $\omi$ is equal to the Kähler-Einstein metric $\om_{\rm KE}$ of $(X,L, \phi_L)$ on $X\setminus E$. More precisely we have $\omke=\omi+[E]$. }
\end{rema}

\begin{proof}
Recall that $\om_m=p\om_A+\ddc \vp_m$ is solution of the Monge-Ampère equation
$$(p\om_A+\ddc \vp_m)^n=e^{\vp_m- \frac{p-1}{p}\vp_{m-1}}d\mu$$
where $d\mu=|s_E|^2  \cdotp \om^n$. 
We aim to show that for each $m\ge 2$, one has
\begin{equation}
\label{iter}
||\vp_m-\vp_{m-1}||_{L^{\infty}(X)}\le \frac{p-1}{p} ||\vp_{m-1}-\vp_{m-2}||_{L^{\infty}(X)}
\end{equation}
Let $C_{m}:= \sup_X (\vp_{m-1}-\vp_{m-2})$, and let $U_m=\{\vp_m > \vp_{m-1}+ \frac{p-1}{p} C_m\}$. An application of the comparison principle yields:
\begin{equation}\label{compais}
\int_{U_m}  e^{\vp_m- \frac{p-1}{p} \vp_{m-1}}d\mu\le \int_{U_m} e^{\vp_{m-1}- \frac{p-1}{p} \vp_{m-2}}d\mu . 
\end{equation}
On $U_m$, one has:
\begin{eqnarray*}
\vp_m- \frac{p-1}{p}\vp_{m-1}&>& \frac{1}{p}\vp_{m-1}+ \frac{p-1}{p} C_m\\
&=&(\vp_{m-1}- \frac{p-1}{p} \vp_{m-2})+ \frac{p-1}{p} [C_m-(\vp_{m-1}-\vp_{m-2})] \\
&\geq& \vp_{m-1}- \frac{p-1}{p} \vp_{m-2} .
\end{eqnarray*} 
Together with \eqref{compais}, we know that $U_m$ has measure zero with respect to $d\mu$, hence also with respect to $(p\om_A+\ddc \vp_m)^n$. 
By the domination principle, cf. e.g. \cite[Cor. 2.5]{BEGZ}, we see $U_m$ is empty, hence $\vp_m-\vp_{m-1} \le \frac{p-1}{p}  \sup_X (\vp_{m-1}-\vp_{m-2})$. 
Using an analogous argument, one can show that $\vp_m-\vp_{m-1} \ge \frac{p-1}{p}  \inf_X (\vp_{m-1}-\vp_{m-2})$, which proves \eqref{iter}. It follows by iteration that
$$||\vp_m-\vp_{m-1}||_{L^{\infty}(X)}\le (\frac{p-1}{p})^{m-1}||\vp_1||_{L^{\infty}(X)}$$
and therefore the sequence $(\vp_m)_{m\ge 1}$ converges uniformly to a $p\om_A$-psh function $\vp_{\infty}$. As Bedford-Taylor product is continuous with respect to uniform convergence, $\vp_{\infty}$ satisfies:
$$(p\om_A+\ddc \vp_{\infty})^n=e^{\frac 1 p \vp_{\infty}}d\mu$$
which proves the proposition.
\end{proof}

\subsection{Convergence of the Bergman kernel iteration }
\label{sec:conv}
In this paragraph we fix an integer $m\ge 1$ and we prove that the twisted Kähler-Einstein metric $\om_m$ is the weak limit of iterated Bergman kernels.

\noindent Consider the line bundle 
\begin{equation}\label{mp35}
L_p:= (p-1)(K_X+ L)+ L,
\end{equation}
where $p$ is a positive integer such that $pL$ is a line bundle and $pE$ has integer coefficients.   
We 
recall that the triple $(X,L,h_L)$ satisfies the following.
\begin{enumerate}
\item[$\bullet$] $K_X+L=A+E$ is a Zariski decomposition of the big line bundle $K_X+L$.
\item[$\bullet$] The hermitian metric $h_L=e^{-\phi_L}$ on the $\Q$-line bundle $L$ is a smooth and has semipositive curvature, i.e. $dd^c \phi_L \ge 0$.
\end{enumerate}
We then endow $L_p$ with the metric given by the weights
\begin{equation}\label{mp36}
\tau_m:= (p-1)\left(\frac{\phi_{m-1}}p+\phi_{E}\right)+ \phi_L
\end{equation}
where $\phi_{m-1}$ is the weight corresponding to the metric on $pA$ defined by \eqref{phiad3}, $\phi_{E}$ is a singular weight on $\mathcal O_X(E)$ such that $\ddc \phi_{E}=[E]$ and finally $\phi_L$ is the smooth metric on $L$ satisfying $\ddc \phi_L= \Theta_{h_L}(L)$ such that we have the equality \eqref{IMA3}.
\smallskip

We write 
$$(\ell+1)(K_X+L_p)=K_X+\ell (K_X+L_p)+ L_p$$
and then we can define a singular metric $h_{\ell}$ 
on the line bundle $\ell (K_X+L_p)$ by induction on $\ell$ in the following manner: 
\begin{equation}
 \label{iteration} 
h_{\ell+1}:=K_{\ell+1}^{-1} 
\end{equation}
where 
$$K_{\ell+1}:=K(X,(\ell+1)(K_X+L_p), h_{\ell }\cdotp e^{-\tau_m})$$ 
is the Bergman kernel of $(\ell+1)(K_X+L_p)$ endowed with the metric above.
Of course that it depends on $m$ and $p$, even if our notation
does not reflects this.
\medskip

\noindent In the current subsection we are aiming at the following result, from which Theorem~\ref{thmaa} will follow easily. 

\begin{theo}\label{mainpropsubsect16}
Under the assumptions above, the renormalized Bergman kernels $ \left(n!^{\ell}\ell!^{-n}K_{\ell}\right)^{1/\ell}$ converge to $e^{\phi_m+p\phi_E}$ as $\ell\to +\infty$.
\end{theo}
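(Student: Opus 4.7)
The plan is to show that the iteration of Bergman kernels defined in \eqref{iteration} produces, after the renormalization $(n!^\ell \ell!^{-n} K_\ell)^{1/\ell}$, the weight $\Phi := \phi_m + p \phi_E$ on the line bundle $N := K_X + L_p$.

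\medskip

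\emph{Step 1 (Identification of the fixed point).} A direct computation using the definition $\tau_m = (p-1)(\phi_{m-1}/p + \phi_E) + \phi_L$ shows that $\Phi - \tau_m = \phi_m - \tfrac{p-1}{p}\phi_{m-1} + \phi_E - \phi_L$; since $\ddc \Phi = \ddc \phi_m$ away from $\Supp E$ and non-pluripolar products ignore divisorial contributions, the Monge-Ampère equation \eqref{IMA3} yields
$$\la (\ddc \Phi)^n \ra = e^{\Phi - \tau_m} d\lambda.$$
In other words, $\Phi$ is the ``twisted Kähler-Einstein'' weight on $N$ relative to the metric $e^{-\tau_m}$ on $L_p$, i.e.\ exactly the fixed point of the iteration at the level of the limit Monge-Ampère equation.

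\medskip

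\emph{Step 2 (Two-sided Bergman kernel estimates).} Following Tsuji's strategy \cite{Tsuji10}, I would establish two-sided bounds of the form
$$ a_\ell \, e^{\ell \Phi(x) - \varepsilon_\ell} \leq K_\ell(x) \leq a_\ell \, e^{\ell \Phi(x) + \varepsilon_\ell}$$
on the Zariski open set where $\Phi$ is smooth, with $a_\ell := \ell!^n / n!^\ell$ and $\varepsilon_\ell = o(\ell)$. The combinatorial prefactor $a_\ell$ arises from iterating the Bergman kernel asymptotic $K \sim k^n/n!$: at each step the fixed-point equation of Step~1 produces, via Tian-Catlin-Zelditch, a multiplicative gain of $\ell^n / n!$, and the product of these gains over $\ell$ steps is exactly $a_\ell$. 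Concretely, the upper bound uses the submean inequality applied to $\log |s|^2_{h_{\ell-1}e^{-\tau_m}}$ on a polydisc of radius $\sim 1/\sqrt{\ell}$ in Kähler normal coordinates for $\ddc \Phi$, combined with the inductive hypothesis on $K_{\ell-1}$ and the Monge-Ampère density $e^{\Phi-\tau_m}d\lambda$ to compute the relevant $L^2$ measure. The lower bound uses the $L^{2/p}$-Ohsawa-Takegoshi extension of \cite{BP2} applied inductively to extend one-jet data at $x$ into a peak section of $\ell N$, whose $L^2$-norm is again controlled by $\la(\ddc \Phi)^n\ra$.

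\medskip

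\emph{Step 3 (Conclusion and extension across $\Supp E$).} From Step~2, one obtains $(n!^\ell \ell!^{-n} K_\ell)^{1/\ell} = e^{\Phi(x)} \cdot e^{\varepsilon_\ell/\ell}$ on $X \setminus \Supp E$, and $\varepsilon_\ell/\ell \to 0$ gives the convergence there. Since the renormalized Bergman kernels $(n!^\ell\ell!^{-n}K_\ell)^{1/\ell}$ define a sequence of quasi-psh weights on $N$, their limit is determined by the restriction to the Zariski open set $X\setminus \Supp E$, so the conclusion propagates to all of $X$.

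\medskip

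The main obstacle, in my view, is controlling the accumulation of errors across the $\ell$ iterations. Each application of the Bergman kernel asymptotic introduces a $(1 + O(1/\ell))$ multiplicative correction, and naively compounding these across $\ell$ steps could produce a nontrivial factor. The key reason this works is precisely Step~1: $\Phi$ being the \emph{exact} fixed point of the limit Monge-Ampère equation forces the errors to telescope into an $o(\ell)$ total contribution, which disappears after taking the $\ell$-th root. A secondary difficulty is that the asymptotic expansion of the Bergman kernel is classical only for smooth positively curved metrics, whereas $\Phi$ has logarithmic poles along $E$; the argument must therefore be carried out on $X\setminus \Supp E$ with uniform control near $\Supp E$, using the birational invariance of the setup (Remark~\ref{rem:defi}(a)).
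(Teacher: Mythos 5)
Your Step 1 and the lower-bound half of Step 2 match the paper: Theorem~\ref{mainpropsubsect16} is indeed proved by identifying $e^{\phi_m+p\phi_E}$ through the Monge--Amp\`ere equation \eqref{IMA3} and by establishing the recursive lower bound $C_{\ell}\ge \kappa_{\ep,\ell}\,\frac{\ell^n}{n!}\,C_{\ell-1}$ of Proposition~\ref{keyprop1} via a peak section with Gaussian $L^2$-estimates and a H\"ormander-type $\bar\partial$ correction. Note, however, that the paper does \emph{not} run this argument with $\phi_m$ itself: as pointed out after \eqref{mp37}, $\phi_m$ is not regular enough for normal-coordinate expansions, so the peak sections are built from the smooth Aubin--Yau regularizations $\phi_{m,\ep}$ of \eqref{mp38}, and the uniform convergence \eqref{convv2} is what lets one pass to the limit in $\ep$ afterwards. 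Your proposal silently assumes $\Phi$ admits K\"ahler normal coordinates on a Zariski open set, which is not available a priori.

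The genuine gap is in your upper bound. A pointwise bound $K_\ell \le a_\ell e^{\ell\Phi+\varepsilon_\ell}$ obtained by sub-mean-value inequalities on balls of radius $\sim \ell^{-1/2}$ requires (i) two-sided curvature control of $\ddc\Phi$ on such balls, which degenerates as the center approaches $\Supp E$ where $\Phi$ has log poles, and (ii) an inductive hypothesis on $K_{\ell-1}$ that is uniform on a neighborhood of each point, so the induction cannot be closed uniformly near $\Supp E$; your assertion that the errors ``telescope'' because $\Phi$ is a fixed point is not an argument --- the per-step multiplicative errors accumulate, and one must actually prove the sum is $o(\ell)$ uniformly on $X$, which is precisely what fails near $E$. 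The paper avoids the pointwise upper bound entirely: Proposition~\ref{prop:int} gives only the \emph{integral} bound $\limsup_\ell \int_X n!(\ell!^{-n}K_\ell)^{1/\ell}e^{-\tau_m}\le (pA)^n$, obtained softly by iterating H\"older's inequality against the identity $\int_X K_\ell K_{\ell-1}^{-1}e^{-\tau_m}=N_\ell=\dim H^0(X,\ell pA)$ together with Riemann--Roch. Since the normalization \eqref{norm} makes $(pA)^n=\int_X e^{\phi_m+p\phi_E}e^{-\tau_m}$, any subsequential limit $\Gamma$ satisfies both $\Gamma\ge e^{\phi_m+p\phi_E}$ (from the lower bound) and $\int_X \Gamma e^{-\tau_m}\le \int_X e^{\phi_m+p\phi_E}e^{-\tau_m}$, which forces equality almost everywhere. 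If you want to salvage your scheme, you should replace your Step 2 upper bound by this integral squeeze.
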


Prior to the proof of this result we are making a few preliminary remarks
concerning the singularities of $h_{\ell}$. Since $K_X+L=A+E$ is a Zariski decomposition, one knows that if $p$ is divisible enough, the multiplication by $s_{pE}$ induces an isomorphism $H^0(X,pA) \to H^0(X,p(K_X+L))$. Therefore all sections $s\in H^0(X,\ell (K_X+L_p))$ vanish along $pE$ at order at least $\ell $. Since $A$ is semi-ample, there exists a section whose
vanishing order along $pE$ is exactly $\ell$. A quick induction shows that every section $s\in H^0(X,(\ell+1)(K_X+L_p))$ is square integrable with respect to $h_{\ell}\cdotp e^{-\tau_p}$. \\

\noindent The MA equation for $\phi_m$ is as follows
 \begin{equation}
\label{mp37}
(\ddc \phi_m)^n =e^{\phi_m+ \phi_E-\phi_L-\frac{p-1}{p}\phi_{m-1}}d\lambda
\end{equation}
given \eqref{IMA3}. The solution $\phi_m$ is not regular enough for what is needed in the arguments to follow, so we also consider the
approximation obtained in Proposition \ref{iteratiooon} for which we have
\begin{equation}\label{mp38}
(\ddc \phi_{m, \ep})^n =e^{\phi_{m, \ep}-\frac{p-1}{p}\phi_{m-1, \ep}-\phi_L}(\ep^2e^{\rho_E}+ e^{\phi_E})
d\lambda
\end{equation}
where $e^{-\rho_E}$ is the smooth metric on $E$
we fixed in section \ref{sec:approx}.
\smallskip

\noindent The weights
\begin{equation}\label{mp39}
\ell\left(\phi_{m, \ep}+ p\phi_{E_\ep}\right)
\end{equation}
are defining a metric on $\ell(K_X+ L_p)$ and therefore the quantity    
\begin{equation}\label{defninf}
C_{\ell}:=\inf_X \frac{K_{\ell}}{|s_{E_\ep}|^{2\{\ell p\}}e^{\ell\left(\phi_{m, \ep}+ p\phi_{E, \ep}\right)}}
\end{equation}
is a strictly positive real number, cf. the discussion above.
\medskip

\noindent  The proof of Theorem \ref{thmaa} relies heavily on the
following statement. 

\begin{prop}
\label{keyprop1}
For every $m$ fixed, there exists $\kappa_{\ep, \ell} >0$ such that: 
$$C_{\ell} \ge \kappa_{\ep, \ell} \cdotp \frac{\ell^n}{n !} \cdotp C_{\ell-1}$$
and 
$$\lim_{\ep \to 0 } \lim_{\ell \to +\infty}\left(\prod_{k=1}^{\ell }\kappa_{\ep,k}\right)^{1/\ell}=1 .$$ 
\end{prop}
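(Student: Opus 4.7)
The strategy is to apply the Ohsawa--Takegoshi--Manivel $L^2$ extension theorem from the zero-dimensional subvariety $\{x_0\}$ at each step of the Bergman kernel iteration, converting the pointwise lower bound on $K_{\ell-1}$ (encoded in $C_{\ell-1}$) into a peak-to-norm estimate for sections of $\ell(K_X+L_p)$ at a generic point $x_0\in X$.

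For the setup, use the decomposition $\ell(K_X+L_p)=K_X+\bigl[(\ell-1)(K_X+L_p)+L_p\bigr]$ and endow $(\ell-1)(K_X+L_p)+L_p$ with the metric $h_{\ell-1}\cdot e^{-\tau_m}$. This metric has plurisubharmonic weight: $-\log h_{\ell-1}=\log K_{\ell-1}$ is psh by Berndtsson's positivity theorem for Bergman kernels, and $\tau_m=(p-1)(\phi_{m-1}/p+\phi_E)+\phi_L$ is psh by construction as a sum of psh weights. For any $x_0\in X$ away from $\Supp(E)$ and any prescribed value $v$, the Ohsawa--Takegoshi--Manivel extension theorem produces $s\in H^0(X,\ell(K_X+L_p))$ with $s(x_0)=v$ and
\[
\int_X|s|^2\,h_{\ell-1}\,e^{-\tau_m}\le C_n\,|v|^2\cdot I_{x_0}(\ell),
\]
where $I_{x_0}(\ell)$ is a local factor given by integrating the weight on a small coordinate ball around $x_0$. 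Hence $K_\ell(x_0)\ge |v|^2/\|s\|^2\ge 1/(C_n\,I_{x_0}(\ell))$.

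To turn this into the recursion, bound $h_{\ell-1}$ pointwise from above via the defining inequality of $C_{\ell-1}$, and estimate $I_{x_0}(\ell)$ on a ball of radius $r_\ell\sim\ell^{-1/2}$ by a Laplace-type asymptotic expansion. Since $\phi_{m,\ep}$ is smooth for each fixed $\ep>0$ and solves the Monge--Amp\`ere equation \eqref{mp38}, Taylor-expanding the smooth weight at $x_0$ and integrating against $dV$ yields
\[
I_{x_0}(\ell)\le C_{\ell-1}^{-1}\cdot\frac{n!}{\ell^n}\cdot|s_{E_\ep}|^{-2\{\ell p\}}(x_0)\,e^{-\ell(\phi_{m,\ep}+p\phi_{E,\ep})(x_0)}\,(1+o(1)),
\]
where the density in \eqref{mp38} precisely cancels the smooth multiplicative factors coming from $\tau_m$ and produces the geometric $n!/\ell^n$ normalization. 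Taking the infimum over $x_0$ gives $C_\ell\ge \kappa_{\ep,\ell}\cdot\ell^n/n!\cdot C_{\ell-1}$, with $\kappa_{\ep,\ell}$ absorbing the OT constant, the $(1+o(1))$ Laplace error, and the smooth discrepancies between $\phi_m,\phi_{m-1}$ and their $\ep$-regularizations.

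For the second claim, for each fixed $\ep>0$ the smoothness of $\phi_{m,\ep},\phi_{E,\ep}$ makes the error $\log\kappa_{\ep,\ell}$ of size $O(\ell^{-1})$, so Ces\`aro averaging gives $(\prod_{k=1}^\ell\kappa_{\ep,k})^{1/\ell}\to 1$ as $\ell\to\infty$; the outer limit in $\ep$ then reduces to a uniformity check. The main obstacle is making the Laplace estimate uniform in $x_0$, especially near $\Supp(E_\ep)$: the $\ep$-regularization together with the fractional exponent $\{\ell p\}$ in the definition of $C_\ell$ are tuned precisely to absorb the non-integer contributions coming from $a_i^\delta$ and to keep both the OT extension constant and the asymptotic expansion uniform in $x_0$ and $\ell$. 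That control, rather than the conceptual scheme, is the real crux.
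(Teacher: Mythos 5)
Your overall scheme --- produce a peak section at an arbitrary point $x_0$, estimate its norm by a Gaussian/Laplace asymptotic driven by the Monge--Amp\`ere equation \eqref{mp38}, and feed the extremal characterization of the Bergman kernel into the recursion for $C_\ell$ --- is the same as the paper's. The genuine gap lies in how you produce the global section. You invoke the Ohsawa--Takegoshi--Manivel theorem from the point $\{x_0\}$ and accept a universal constant $C_n$ in the $L^2$ bound, which you then propose to ``absorb'' into $\kappa_{\ep,\ell}$. But this constant is incurred at \emph{every} step of the iteration, so $\log\kappa_{\ep,\ell}=-\log C_n+o(1)$ rather than $O(\ell^{-1})$, the Ces\`aro averages of $\log\kappa_{\ep,k}$ converge to $-\log C_n\neq 0$, and hence $\lim_{\ell}\bigl(\prod_{k\le\ell}\kappa_{\ep,k}\bigr)^{1/\ell}=C_n^{-1}<1$: the second assertion of the proposition fails. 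This is not a cosmetic loss. In the proof of Theorem~\ref{mainpropsubsect16} the resulting lower bound $\liminf_\ell n!\,(\ell!^{-n}K_\ell)^{1/\ell}\ge e^{\phi_m+p\phi_E}$ must be sharp in order to be squeezed against the integral upper bound of Proposition~\ref{prop:int}; a pointwise bound $\Gamma\ge C_n^{-1}e^{\phi_m+p\phi_E}$ together with that integral inequality does not force $\Gamma=e^{\phi_m+p\phi_E}$.

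The paper avoids this by building the extension by hand: an explicit local section $u=\prod f_i^{\lceil p(1+\ell)a_i^\ep\rceil}\, e^{\frac{1+\ell}{2}h}\,dz\otimes\cdots$, where $h$ is the degree-two holomorphic Taylor polynomial of $\phi_{m,\ep}$ at $x_0$, whose local $L^2$ norm is \emph{asymptotically equal} to $n!/(\ell+1)^n$ with a relative error $a_\ell\to 0$ uniform in $x_0$ (Claim~\ref{cla}); this is followed by a cut-off and a H\"ormander $\bar\partial$-correction with the log-pole weight $\rho=n\log|x-x_0|^2$. The key point is that the $\bar\partial$-datum is supported in the annulus $B(r_\ep)\setminus B(r_\ep/2)$, where the Gaussian mass is only $O(a_\ell)$ times the main term, so the universal constant of the $L^2$ estimate multiplies a quantity that vanishes as $\ell\to\infty$ and no fixed constant is lost per step. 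If you insist on an extension-theorem formulation you would need an asymptotically optimal constant, with $o(1)$ errors uniform in $x_0$ (including near $\Supp E_\ep$, where the fractional exponents $\{\ell p a_i^\ep\}$ enter), and establishing that amounts to redoing the same peak-section analysis. A minor separate point: the plurisubharmonicity of $\log K_{\ell-1}$ on the fixed fiber is elementary (logarithm of a supremum/sum of squared moduli of holomorphic sections); Berndtsson--P\u{a}un is needed only later, for positivity in the base direction in the proof of Theorem~\ref{thmaa}.
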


\begin{proof}
We have organized our arguments in four main steps. \\

\noindent
\textbf{Step 1. Choice of an appropriate local section $u$.}
\vspace{2mm}

\noindent Let $x_0\in X$ be an arbitrary point.
Let $x_0\in U\subset X$ be an open subset of $X$ such that the restriction to $U$ of all our bundles (i.e. $pL, pE...$) is trivial. We consider the local weights $\rho_i$ for $h_i$ cf. Section~\ref{not} such that $\rho_i(x_0)= 0$ for all $i$.
We take a coordinate system $(z_i)_{i=1,\dots, n}$ on $U$, centered at $x_0$, and we assume that \eqref{mp63} holds. All the local computations to follow are done with respect to this data.
\smallskip

\noindent We introduce the quadratic function
\begin{equation}\label{mp339}
  h(z):= \phi_{m, \ep}(x_0)+ 
  + 2\sum_i\partial_i\left(\phi_{m, \ep}\right)(x_0)z_i
 + 4\sum_{i,j}\partial^2_{i,j}\left(\phi_{m, \ep}\right)(x_0)z_iz_j
\end{equation}
and the holomorphic section of $(1+\ell)(K_X+ L_p)|_U$
\begin{equation}\label{mp40}
u:= \prod f_{i}^{\lceil p(1+ \ell)a_i^\ep\rceil}\cdotp e^{\frac{ 1+\ell}{2}h}dz\otimes e_{K_X+ L_p}^{\otimes \ell}\otimes e_{L_p} 
\end{equation}
written as $(n,0)$-form with values in $\ell(K_X+ L_p)+ L_p|_U$. Note that $a_i^\ep> a_i$ for each index $i$.
\smallskip

\noindent We denote by $h_{\ep}$ the metric on $\ell(K_X+ L_p)+ L_p$ defined by the weights
\begin{equation}\label{mp66}
\ell\left(\phi_{m, \ep}+ p\phi_{E_\ep}\right)+ \tau_m+ \log|s_{E_\ep}|^{2\{\ell p\}}
\end{equation}
where we recall that $\displaystyle \tau_m= (p-1)\left(\frac{\phi_{m-1}}p+\phi_{E}\right)+ \phi_L$ was introduced in \eqref{mp36}.
The measure induced by the point-wise norm of $u$ with respect to the metric \eqref{mp66} is equal to
\begin{equation}\label{mp41}
  |u|^2_{h_\ep}= 
  e^{(1+ \ell)\Re(h)}\cdotp e^{-\ell\left(\phi_{m, \ep}+ p\phi_{E,\ep}\right)- \tau_{m}}\prod |f_i|^{2\lceil p(1+ \ell)a_i^\ep\rceil}\frac{d\lambda}{|s_{E_\ep}|^{2\{\ell p\}}}
\end{equation}
and it can be reorganized as follows
\begin{equation}\label{mp42}
  |u|^2_{h_\ep}= e^{(1+ \ell)\left(\Re(h)- \phi_{m, \ep})\right)}
  \cdotp \prod |f_i|^{2\mu_i}\cdotp e^{\{\ell pa^\ep_i\}\rho_i}e^{\phi_{m, \ep}-\frac{p-1}{p}\phi_{m-1}+ \phi_E-\phi_L}d\lambda
\end{equation}
where $\mu_i:= \lceil p(1+ \ell)a_i^\ep\rceil- p\ell a_i^\ep- p a_i- \{\ell pa^\ep_i\}$. 

We therefore have the point-wise inequality
\begin{equation}\label{mp43}
|u|^2_{h_\ep}\leq \gamma_\ep F e^{(1+ \ell)\left(\Re(h)- \phi_{m, \ep}\right)}(dd^c\phi_{m, \ep})^n
\end{equation}
on the set $U$, where the positive function $F$ and the constant $\gamma_\ep$
are as follows.
\begin{enumerate}

\item[(i)] We define $\displaystyle \gamma_\ep:= \sup_U e^{\frac{p-1}{p}|\phi_{m-1, \ep}- \phi_{m-1}|}$. By Proposition \ref{iteratiooon}, we have $\gamma_\ep\to 0$.

\item[(ii)] Let $F:= \prod |f_i|^{2\mu_i}\cdotp e^{\{\ell pa^\ep_i\}\rho_i}$ be the function corresponding to the product in \eqref{mp42}. We have
  $$F= \prod |f_i|^{2(\lceil p(1+ \ell)\ep c_i\rceil- \lceil p\ell\ep c_i\rceil)}\cdotp e^{\{\ell pa^\ep_i\}\rho_i}$$
hence $F(0)\leq 1$, and $\sup_U(F)- 1$ is smaller than the diameter of $U$ multiplied with a bounded constant.    
\end{enumerate}

\medskip

\noindent\textbf{Step 2. Estimate of the $L^2$ norm of $u$.}
\vspace{2mm}

\noindent
Let $B(r_{\ep})$ be the Euclidean ball centered at $x_0$ of radius $r_{\ep}$ with respect to the fixed Kähler metric $\om$. We have the following inequalities, which will be proved by a direct computation at the end of
this section.

\begin{claim}
\label{cla}
 For every $\ep > 0$ fixed, there exists a radius $r_\ep$ and a sequence $a_\ell$ converging to $0$
(independent of $x_0\in X$), such that
\begin{equation}\label{totall2}
\frac{(\ell+1)^n}{n !} \int_{B(r_{\ep})} F e^{-(\ell+1) (\phi_{m,\ep}-\mathrm{Re}(h))}(\ddc \phi_{m,\ep})^n \leq 1+ a_\ell \qquad\text{for every } \ell\in \mathbb{N},
\end{equation}
and
\begin{equation}\label{totall21}
 (\ell+1)^{n} \int_{B(r_{\ep})\setminus B(\frac{r_{\ep}}{2})} F e^{-(\ell+1) (\phi_{m,\ep}-\mathrm{Re}(h))}(\ddc \phi_{m,\ep})^n 
\leq  a_\ell \qquad\text{for every } \ell\in \mathbb{N} .
\end{equation}
\end{claim}
An important point of the claim is that the sequence $\{a_\ell\}$ is independent of $x_0$.
\smallskip

\noindent Combined with the inequality \eqref{mp43} we obtain
\begin{equation}\label{mp44}
\frac{(\ell+1)^n}{n !} \int_{B(r_{\ep})} |u|^2_{h_\ep}\leq 1+ a_\ell \qquad\text{for every } \ell\in \mathbb{N},
\end{equation}
and
\begin{equation}\label{mp45}
 (\ell+1)^{n} \int_{B(r_{\ep})\setminus B(\frac{r_{\ep}}{2})}  |u|^2_{h_\ep}
\leq  a_\ell \qquad\text{for every } \ell\in \mathbb{N}.
\end{equation}

\noindent
\textbf{Step 3. Construction of a global section.}

\vspace{2mm}

\noindent
For every $\ep >0$ fixed,  we will construct in this step a section $v_{\ell,\ep} \in H^0 (X, (\ell +1) p (K_X +L))$ such that $v_{\ell,\ep} (x_0)=u (x_0)$ together with an estimate for its $L^2$ norm
\begin{equation}\label{globalsect}
 \frac{(\ell+1)^n}{n !} \cdot \int_X |v_{\ell, \ep}|^2 _{h_{\ell}}e^{-\tau_m}.
\end{equation}
uniform with respect to the point $x_0\in X$.

\medskip

Let $\rho$ be a smooth function on $X\setminus \{x_0\}$ which equals
$n \log |x- x_0|^2$ near $x_0$.
For every $\ep>0$, we can find a cut-off function $\chi_{\ep}$ for $B(r_{\ep})$, namely $\chi_{\ep}\equiv 1$ on $B(r_{\ep}/2)$ and $\chi_\ep \equiv 0$ on 
$X\ssm B(r_{\ep})$ such that
\begin{equation}\label{normcontew}
e^{-\rho}|\dbar \chi_\ep|^2_{\om_{m,\ep}} \leq M_\ep \qquad\text{on }
B(r_{\ep})\smallsetminus B(r_{\ep}/2)
\end{equation}
 for some constant $M_\ep$ independent of $x_0\in X$.
One can easily check that for $\ell$ large enough (depending on $\ep$), we have
\begin{equation}\label{curterm}
  \ell dd^c(\phi_{m,\ep}+ p\phi_{E,\ep})+ dd^c\tau_{m, \ep}+
  \ddc \big(\rho+ \log|s_{E_\ep}|^{2\{(1+\ell)p\}}\big) \geq dd^c\phi_{m,\ep} 
\end{equation}
on $X$, since $\displaystyle dd^c\phi_{m,\ep}= \om_{m, \ep}$ is a K\"ahler metric for each $m,\ep$. 
Thanks to \eqref{mp44} we have
$$ \frac{(\ell+1)^n}{n !} \int_X |\chi_\ep  u|^2_{h_\ep} \leq 1+ a_\ell .$$
By the inequality \eqref{mp45} and the construction of $\chi_\ep$, we have
$$
\int_X |\bar \d (\chi_\ep u)|^2 _{{h_\ep}, \om_{m,\ep}} \leq \int_{B(r_{\ep})\setminus B(\frac{r_{\ep}}{2})}   e^{-\rho}|\dbar \chi_\ep|^2_{\om_{m,\ep}}   \cdotp   e^{-(\ell+1)(\phi_{m,\ep} -\mathrm{Re}(h))} (\ddc \phi_{m,\ep})^n .
$$
Together with \eqref{mp45} and \eqref{normcontew}, we get 
$$(\ell+1)^n
\int_X |\bar \d (\chi_\ell u)|^2 _{{h_\ep},\om_{m,\ep}} e^{-\rho}\leq a_\ell \cdot M_\ep .$$
Thanks to \eqref{curterm}, one can solve the $\bar \d$-equation and apply Hörmander estimates (see e.g. \cite[Cor.~14.3 on p.86]{BDIP}) for $\ell$ large enough (independent of $x$) to the $\bar \d$-closed form
$$\bar \d (\chi_\ep u) \in \mathcal C^{\infty}(X,\Lambda^{n,1}T_X^*\otimes E)$$
where $E= \ell(K_X+L_p)+ L_p$ is endowed with the hermitian metric $h_{\ell,\ep}$. This yields a global, smooth section $u_{\ell,\ep}$ of $\Lambda^{n,0}T_X^*\otimes E=(\ell+1)(K_X+L_p)$ such that
$$\dbar u_{\ell,\ep}= \bar \d (\chi_\ep u) \quad \mbox{and} \quad (\ell+1)^{n}\int_{X} |u_{\ell,\ep} |^2_{h_\ep} \leq  a_\ell \cdot M_\ep. $$
Because of the non-integrability of $e^{-\rho}$ at $x_0$, one has $u_\ell (x_0)=0$. 
As a consequence, the section 
$v_{\ell,\ep}:=\chi_\ep u-u_{\ell,\ep} \in H^0 (X, (\ell +1) (K_X +L_p))$ satisfies the inequality

\begin{equation}\label{mp56}
 \frac{(\ell+1)^n}{n !} \cdot \int_X |v_{\ell, \ep}|^2_{h_\ep} \leq 1+ a_\ell 
\end{equation}
and we also have $v_{\ell,\ep}(x_0)= u(x_0)$. By definition of $C_\ell$ we have
\begin{equation}\label{mp57}
\frac{(\ell+1)^n}{n !} \cdot \int_X |v_\ell|^2_{h_\ell} e^{- \tau_{m}} \leq \gamma_\ep\frac{1+ a_\ell}{C_\ell}. 
\end{equation}

\noindent
\textbf{Step 4. Conclusion.}
\vspace{2mm}

\noindent
Thanks to \eqref{mp57} we obtain that the following inequality
$$K_{\ell+1 } \ge \frac{ C_{\ell}}{\gamma_\ep(1+a_\ell) } \frac{(\ell+1)^n}{n !}\cdotp e^{(\ell+1) (\phi_{m,\ep}+p\phi_{E_{\ep}})}|s_{E_\ep}|^{2\{(1+\ell)p\}}$$
at $x_0$. Therefore $$C_{\ell+1}\ge \kappa_{\ep, \ell}  \cdot \frac{(\ell+1)^n}{n !} \cdot C_{\ell} ,$$
where $\kappa_{\ep, \ell}= ( \gamma_{\ep}  (1+a_\ell))^{-1} $. Although the sequence $\{a_\ell\}$ depends on $\ep$
we have 
$$\lim_{\ell \to +\infty}\left(\prod_{k=1}^{\ell }\kappa_{\ep,k}\right)^{1/\ell}=\gamma_{\ep} $$
since it tends to $0$.
Therefore
$$\lim_{\ep \to 0 } \lim_{\ell \to +\infty}\left(\prod_{k=1}^{\ell }\kappa_{\ep,k}\right)^{1/\ell}=1, $$
and Proposition~\ref{keyprop1} is proved.
\end{proof}

\noindent
It remains to prove Claim~\ref{cla} stated in Step 3. 
\begin{proof}[Proof of Claim~\ref{cla}]
\label{proof}

Up to replacing $\phi_{m,\ep}$ by $ \phi_{m,\ep}-\mathrm{Re}(h)$ (which does not change the metric $\ddc \phi_{m,\ep}$), one can assume that $\phi_{m,\ep}$ has no polyharmonic terms of order two or less in its expansion near $x_0$, and $F(x_0)=1$. With respect to local coordinates $(z_i)$ centered at $x$, one has 

$$\phi_{m,\ep}= \sum_{j,k} a_{j,k} z_j \bar z_k + R(z)$$ 
where $R(z)=O(|z|^3)$ and the matrix $A=(a_{j,k})$ is positive definite. These quantities are depending on $\ep$ but the important point is that, when $\ep>0$ is fixed, one can find a constant $C_{\ep}>0$ independent of the chosen point $x$ such that 
$$|R(z)| \le C_{\ep} |z|^3, \quad  C_{\ep}^{-1} I_n < A <  C_{\ep} I_n.$$
The constant $C_{\ep}$ can be chosen to be commensurable to $\sup_X(|\om_{m,\ep}|_\om+ |\nabla^{\om}\om_{m,\ep}|)$.  
\medskip

After the change of variable $ w:=\sqrt{\ell+1}\sqrt A \cdotp z$  and up to increasing the constant $C_{\ep}$ a little, the integral we have to bound is dominated by
$$\int_{|w|^2 \le (\ell+1) r_{\ep}^2} e^{-|w|^2(1-C_{\ep}\ell^{-1/2}|w|)} (1+C_{\ep}\ell^{-1/2}|w|) \cdotp (dd^c |w|^2)^n/n!.$$
Now, if one chooses $r_{\ep}<1/(2C_{\ep})$, one sees that the integrand is less than $2 e^{-|w|^2/2}$, hence one can apply the dominated convergence theorem to conclude that our integral is asymptotically dominated by
$$\int_{\mathbb C^n} e^{-|w|^2} \cdotp (dd^c |w|^2)^n/n!=\frac{1}{\pi^n}\int_{\mathbb C^n}e^{-|w|^2} idw_1\wedge d\bar w_1 \wedge \cdots \wedge idw_n\wedge d\bar w_n=1 $$
which concludes the proof of \eqref{totall2}.

\medskip

As for \eqref{totall21},  the same change of variable reduces our integral to 
$$ \int_{\frac 1 4 (\ell+1) r_{\ep}^2 \le |w|^2 \le (\ell+1) r_{\ep}^2} e^{-|w|^2(1-C_{\ep}\ell^{-1/2}|w|)} (1+C_{\ep}\ell^{-1/2}|w|) \cdotp (dd^c |w|^2)^n/n!$$
which, up to increasing $C_{\ep}$, is dominated by
$$C_{\ep} \int_{\frac 1 4 (\ell+1) r_{\ep}^2 \le |w|^2\le (\ell+1)r_{\ep}^2} e^{-\ell r_{\ep}^2/8}\cdotp (dd^c |w|^2)^n=O(\ell^{n}e^{-\ell r_{\ep}^2/8}).$$
The estimate \eqref{totall21} follows.
\end{proof}

\noindent 
We will also need an integral upper estimate of $K_{\ell}$; it follows easily from the definition of the Bergman kernel.
\begin{prop}
\label{prop:int}
One has the following upper bound:
$$\limsup_{\ell \to +\infty} \int_X n! (\ell !^{-n} K_{\ell})^{1/\ell} e^{-\tau_p} \le(pA)^n $$
\end{prop}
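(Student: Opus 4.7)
The proof is based on combining the trace/reproducing identity for the Bergman kernel $K_\ell$ with a Hölder- or Jensen-type inequality to pass from an $L^1$-estimate to an $L^{1/\ell}$-estimate.

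By definition $K_\ell = \sum_j |s_j|^2_{h_{\ell-1}e^{-\tau_m}}$ for an $L^2$-orthonormal basis $(s_j)_j$ of $H^0(X,\ell(K_X+L_p))$. Interpreting this sum as an $(n,n)$-form on $X$ via the natural pairing of $(n,0)$-forms valued in $\ell(K_X+L_p)-K_X$ yields the trace identity
\[
\int_X K_\ell = h^0\bigl(X,\ell(K_X+L_p)\bigr).
\]
Combining the relative Zariski decomposition $K_X+L_p \equiv_\Q p(K_X+L) \equiv_\Q pA + pE$ with the isomorphism $H^0(X,\ell pA)\simeq H^0(X,\ell(K_X+L_p))$ induced by multiplication by $s_{pE}^\ell$ for $\ell$ divisible enough (a direct consequence of the defining property of the relative Zariski decomposition), and with the standard volume formula for the big and semi-ample $\Q$-line bundle $pA$, one gets $h^0(X,\ell(K_X+L_p)) = \frac{(pA)^n}{n!}\ell^n(1+o(1))$ as $\ell\to\infty$.

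To convert this into the desired $L^{1/\ell}$-type estimate, I would first invoke the subadditivity of $t\mapsto t^{1/\ell}$ to write pointwise $K_\ell^{1/\ell}\le \sum_j |s_j|^{2/\ell}$, and then apply Hölder's inequality with conjugate exponents $\ell$ and $\ell/(\ell-1)$ to each term $\int_X |s_j|^{2/\ell} e^{-\tau_m}$, against the $L^2$-normalization $\int_X |s_j|^2_{h_{\ell-1}e^{-\tau_m}}=1$. Summing over the orthonormal basis and multiplying by the normalizing factor $n!\cdot\ell!^{-n/\ell}$, the number of basis elements $h^0$ combines with the volume asymptotics and Stirling's formula $(\ell!)^{n/\ell}\sim (\ell/e)^n$ so that all $\ell$-dependent factors collapse, leaving $(pA)^n$ in the $\limsup$.

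The main technical subtlety lies in carefully reconciling the two complementary interpretations of $K_\ell$ (as an $(n,n)$-form for the trace identity versus as a local weight expression making $K_\ell^{1/\ell}$ a metric on $K_X+L_p$), and in the precise bookkeeping needed so that all the auxiliary factors arising from Hölder and Stirling combine to give exactly $(pA)^n$ rather than a strictly larger constant. Once these conventions are aligned, the argument is, as the authors indicate, essentially a direct consequence of the definition of the Bergman kernel.
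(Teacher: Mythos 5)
Your overall architecture (trace identity for $K_\ell$, Riemann--Roch count $N_\ell=\dim H^0(X,\ell pA)=\frac{(pA)^n}{n!}\ell^n(1+o(1))$ via the Zariski decomposition, H\"older with exponents $\ell$ and $\ell/(\ell-1)$, Stirling at the end) is the right one and matches the paper. But the specific way you deploy H\"older has a genuine quantitative gap. You first use subadditivity of $t\mapsto t^{1/\ell}$ to write $K_\ell^{1/\ell}\le\sum_j|s_j|^{2/\ell}$ and then apply H\"older \emph{term by term} against the normalization $\int_X|s_j|^2_{h_{\ell-1}e^{-\tau_p}}=1$. Each term then contributes $\bigl(\int_XK_{\ell-1}^{1/(\ell-1)}e^{-\tau_p}\bigr)^{(\ell-1)/\ell}$, and summing over the $N_\ell$ basis elements yields the recursion
\begin{equation*}
I_\ell\;:=\;\int_XK_\ell^{1/\ell}e^{-\tau_p}\;\le\;N_\ell\cdot I_{\ell-1}^{(\ell-1)/\ell},
\end{equation*}
i.e.\ the dimension enters with exponent $1$ rather than $1/\ell$. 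Unrolling gives $I_\ell\le\prod_{i=1}^{\ell}N_i^{\,i/\ell}$, and since $N_i\sim ci^n$ this grows like $e^{\frac{n}{2}\ell\log\ell}$, which the normalizing factor $(\ell!)^{-n/\ell}\sim(e/\ell)^n$ cannot absorb. Your bookkeeping therefore cannot "collapse to $(pA)^n$"; the estimate diverges.

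The correct step is to apply H\"older once to the whole kernel, not to each term after subadditivity: write $K_\ell^{1/\ell}e^{-\tau_p}=\bigl(K_\ell K_{\ell-1}^{-1}\bigr)^{1/\ell}\cdot K_{\ell-1}^{1/\ell}e^{-\tau_p}$ and use the trace identity in the form $\int_XK_\ell K_{\ell-1}^{-1}e^{-\tau_p}=N_\ell$ (this is where the $(n,n)$-form interpretation belongs, since $\ell(K_X+L_p)-(\ell-1)(K_X+L_p)-L_p=K_X$). H\"older with exponents $\ell$ and $\ell/(\ell-1)$ then gives
\begin{equation*}
I_\ell\;\le\;N_\ell^{1/\ell}\cdot I_{\ell-1}^{(\ell-1)/\ell},
\end{equation*}
so by induction $I_\ell\le\bigl(\prod_{i=1}^{\ell}N_i\bigr)^{1/\ell}=\frac{(pA)^n}{n!}(\ell!)^{n/\ell}\bigl(1+O(\tfrac{\log\ell}{\ell})\bigr)$, which after multiplying by $n!\,(\ell!)^{-n/\ell}$ gives exactly the claimed $\limsup\le(pA)^n$. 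So the missing idea is not a convention issue but the placement of the H\"older inequality: it must see the full reproducing identity so that $N_\ell$ is raised to the power $1/\ell$.
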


\begin{proof}
First, observe that $(\ell !^{-n} K_{\ell})^{1/\ell}e^{-\tau_p}$ is a volume form, so that the claim is licit. Let $(u_1, \ldots , u_{N_{\ell}})$ be an orthonormal basis of $H^{0}(X, \ell (K_X+L_p))$ with respect to the Bergman $L^2$ metric $$K_{\ell-1}^{-1}e^{-\tau_p}$$ Since $\ell p(K_X+L)=\ell pA+\ell pE$ is the Zariski decomposition of $\ell p(K_X+ L)$, every (pluri)-section is $L^2$ with respect to the Bergman metric. In particular we have 
\begin{equation}
\label{RR}
N_{\ell}:=\dim H^0(X,\ell p A) = \frac{(pA)^n}{n!}\cdotp \ell^{n}  (1+ O(\ell^{-1}))
\end{equation}
by Riemann-Roch formula. One has
\begin{eqnarray*}
\int_{X}K_{\ell}\cdotp K_{\ell-1}^{-1} e^{-\tau_p} = N_{\ell}
\end{eqnarray*}
Therefore, applying Hölder's inequality with $p=\ell$ and $q=\frac{\ell}{\ell-1}$, one gets
\begin{eqnarray*}
  \int_X K_{\ell}^{\frac 1{\ell}}\cdotp e^{-\tau_p} &\le&  \left(\int_{X}K_{\ell}
                                                          \cdotp K_{\ell-1}^{-1}
                                                          e^{-\tau_p}\right)^{\frac{1}{\ell}} \cdotp \left(\int_X K_{\ell-1}^{\frac{1}{\ell-1}}e^{-\tau_p}\right)^{\frac{\ell-1}{\ell}} \\
& \le & N_{\ell}^{1/\ell} \cdotp \left(\int_X K_{\ell-1}^{\frac{1}{\ell-1}}e^{-\tau_p}\right)^{\frac{\ell-1}{\ell}}
\end{eqnarray*}
By induction, one gets: 
$$\int_X K_{\ell}^{\frac 1{\ell}}\cdotp e^{-\tau_p} \le \left(\prod_{i=1}^{\ell}N_i\right)^{\frac 1 {\ell}}$$
Now, thanks to \eqref{RR}, the right-hand side of this inequality is equal to $$ \frac{(pA)^n}{n!}\cdotp (\ell!)^{\frac{n}{\ell}} \left(1+O\left(\frac{\log \ell}{\ell}\right)\right)$$
which concludes the proof of the proposition.\\
\end{proof}

\noindent
After all these preliminary statements
we can prove the main result of this subsection.
\begin{proof}[Proof of Theorem~\ref{mainpropsubsect16}]
By Proposition \ref{prop:int} combined with Jensen inequality, $\{\left(\ell!^{-n}K_{\ell}\right)^{1/\ell}\}_{\ell=1}^{+\infty}$ is a family of upper bounded psh weights.
Therefore, to prove the Theorem, it is sufficient to prove that any convergent subsquence of $\{n!\left(\ell!^{-n}K_{\ell}\right)^{1/\ell}\}_{\ell=1}^{+\infty}$ converges to $e^{\phi_m + p\phi_E}$.

\noindent
Let $\{n!\left(\ell_s !^{-n}K_{\ell_s}\right)^{1/\ell_s}\}_{s=1}^{+\infty}$ be a convergent subsequence of $\{n!\left(\ell!^{-n}K_{\ell}\right)^{1/\ell}\}_{\ell=1}^{+\infty}$
and let $\Gamma$ be the limit. 
Thanks to Proposition \ref{keyprop1}, one infers:
$$n!\left(\ell!^{-n}K_{\ell}\right)^{1/\ell} \geq  \left(\prod_{k=1}^{\ell }\kappa_{\ep,k}\right)^{1/\ell} \cdot  |s_{E_\ep}|^{\frac{2\{\ell p\}}{\ell}}e^{\phi_{m,\ep} + p\phi_{E_\ep}}$$
Letting $\ell$ tend to $+\infty$, and then letting $\ep$ tend to zero, item~\eqref{convv2} in Proposition~\ref{iteration}
and Proposition \ref{keyprop1} yield:
\begin{equation}
\label{liminf}
\liminf_{\ell \to +\infty} n!\left(\ell!^{-n}K_{\ell,m}\right)^{1/\ell}  \geq   e^{\phi_{m}+p \phi_E} .
\end{equation}
Therefore, we have 
\begin{equation}
\label{limonedir}
\Gamma \geq e^{\phi_{m}+p \phi_E}   .
\end{equation}

\noindent
Note that 
$$\int_X \Gamma  e^{-(p-1)\left(\frac{\phi_{m-1}}p+\phi_{E}\right)}e^{-\phi_L}  
=\lim_{\ell\rightarrow +\infty} \int_X n!(\ell_s !^{-n} K_{\ell_s, m})^{1/\ell_s} e^{-(p-1)\left(\frac{\phi_{m-1}}p+\phi_{E}\right)}e^{-\phi_L} .$$
Combining this with Proposition \ref{prop:int}, we get 
$$\int_X \Gamma  e^{-(p-1)\left(\frac{\phi_{m-1}}p+\phi_{E}\right)} e^{-\phi_L} \leq  (pA)^n  =\int_X e^{\phi_m +p\phi_E} \cdot e^{-(p-1)\left(\frac{\phi_{m-1}}p+\phi_{E}\right)}e^{-\phi_L} ,$$
where the last equality comes from \eqref{norm}.
Together with \eqref{limonedir}, we get $\Gamma = e^{\phi_m + p\phi_E} $ and the theorem is proved.
\end{proof}

\begin{rema}
\label{global}
{\rm
The convergence $(\ell!^{-n}K_{\ell})^{1/\ell} \to \frac{e^{\phi_m +p\phi_E}}{n!}$ has been proved for a fixed fiber $X_y$, but it readily implies convergence in $L^1_{\rm loc}(X°)$. Indeed, as $(A_y^n)$ is independent of $y\in Y°$, Proposition~\ref{prop:int} coupled with Jensen inequality show that the weights of the metric $(\ell!^{-n}K_{\ell})^{-1/\ell}$ are uniformly bounded above locally near 
$X\ssm X_0$, hence the pointwise convergence almost everywhere on $X°$ implies convergence in $L^1_{\rm loc}(X°)$.}
\end{rema}

\noindent
Now, we can finally give the proof of Theorem~\ref{thmaa}.

\begin{proof}[Proof of Theorem~\ref{thmaa}]
\label{proofthmaa}
Thanks to the reduction steps, we can suppose that on the fibers over $y\in Y^0$, we have a Zariski decomposition
$$(K_{X/Y} +L) |_{X_y}  =A_y +E_y ,$$
where $A$ is semi-ample and big, $E_y$ has snc support and $h_{L}$ is smooth with semipositive curvature. %

\medskip

We first prove by induction that for every $m\in\mathbb{N}$, $\phi_m+p\phi_E$ is a psh weight on $X°$.  

For $m=1$:  We get a sequence of metrics $(h_{\ell,1})_{\ell \ge 1}$ on $p\ell(K_X+L)$ defined by \eqref{iteration}. 
Recall that $\phi_0=\phi_A$ is the weight of $\omega_A$.
Thanks to \cite[Thm.~0.1]{BP}, $h_{1,1}$ has positive curvature on the total space $X$.
We suppose by induction that $h_{\ell,1}$ has positive curvature.
Then $h_{\ell, 1} \cdot e^{-(p-1)\left(\frac{\phi_{A}}p+\phi_{E}\right)}e^{-\phi_L}$
has also positive curvature. By applying \cite[Thm.~0.1]{BP} again, 
$h_{\ell+1,1}$ has positive curvature on the total space $X$.
As a consequence, $h_{\ell,1}$ has positive curvature on the total space $X$ for all $\ell$.
Together with Theorem \ref{mainpropsubsect16}, the limit $\phi_1+p\phi_E$ is a psh weight on $X^0$.

Then we apply the same process again to $m=2$, and get a sequence of metrics $(h_{\ell, 2})_{\ell \ge 1}$ on $p\ell(K_X+L)$ with positive curvature, and therefore the limit $\phi_2+p\phi_E$ is psh. By induction on $m$, we know that $\phi_m+p\phi_E$ is psh for any $m\ge 1$.

\medskip
We can now prove the Theorem. Thanks to Proposition~\ref{convricci} and Remark~\ref{rmk:relation}, $\phi_m+p\phi_E$ converges to the relative Kähler-Einstein metric of $(X°,L,\phi_L)$. As  $\phi_m+p\phi_E$ has positive curvature,  the relative Kähler-Einstein metric has also positive curvature. 
\end{proof}

\subsection{The orbifold case}
\label{orbifold}
In this paragraph, we would like to discuss an extension of Theorem~\ref{thmaa} to a particular case where the weight $\phi_L$ could have positive
Lelong numbers. More precisely, let $p:X\to Y$ be a Kähler fiber space and let $B=\sum_{i\in I}\left(1-\frac 1{m_i}\right)B_i$ be an effective divisor such that:

$\bullet $ The $\mathbb Q$-line bundle $K_{X/Y}+B$ is $p$-ample. 

$\bullet $ For $y\in Y$ generic, $B|_{X_y}$ has snc support.  

$\bullet$ The numbers $m_i \ge 1$ are integers such that $(m_i,m_j)=1$ whenever $B_i \cap B_j \neq \emptyset$.

\noindent
The same strategy as for the proof of Theorem~\ref{thmaa} applies,
modulo the fact that $\phi_{m,\ep}$ from \eqref{mp38} are to be replaced by their
orbifold counterparts, so that we have
\begin{equation}
\label{hg38}
(\ddc \phi_{m, \ep})^n =e^{\phi_{m, \ep}-\frac{p-1}{p}\phi_{m-1, \ep}-\phi_B}(\ep^2e^{\rho_E}+ e^{\phi_E})
d\lambda
\end{equation}
where $\phi_B$ is the canonical \textit{singular} weight on the $\mathbb Q$-line bundle $\mathcal O_X(B)$. The current $\ddc\phi_{m, \ep}$ defines an \textit{orbifold Kähler metric}, that is, its pull back to local uniformizing charts near the support of $B$ becomes a genuine Kähler metric. In this setting, a  new problem arise (due to the presence of singularities): the peak sections from Proposition~\ref{keyprop1} have to be replaced by orbifold peak sections.
One way to bypass this is to use the orbifold Bergman kernel expansion due to Ross-Thomas \cite{RT11}, see also Dai-Liu-Ma \cite{DLM12}. Instead of considering the Bergman kernels $K_{\ell}$ on $\ell(K_{X/Y}+B)$, one considers suitable linear combinations of the Bergman kernels of the form $\sum_{\alpha=0}^{m_i-1} K_{\ell+\alpha}$. These combinations, unlike $K_{\ell}$ alone, turn out to admit the same expansion as in the smooth case at order zero, when $\ell \to +\infty$. This is where the assumption on the arithmetic relation between the $m_i's$ is important. The rest of the proof of Theorem~\ref{thmaa} can be applied almost without any change to conclude. 

It is likely that combining the ideas above with the techniques in \cite{RT11} may allow us to weaken the assumption that  $K_{X/Y}+B$ is $p$-ample and only assume that $K_{X/Y}+B$ is $p$-big admits a relative Zariski decomposition on $X$.

\section{The case of intermediate Kodaira dimension}\label{ssec:intkod}

\subsection{Iitaka fibration and associated K\"ahler-Einstein metric}

We will first consider the absolute case. Let $X$ be a compact K\"ahler manifold and let $B$ be a $\mathbb{Q}$-effective divisor such that the pair $(X, B)$ is klt and such that $\kappa(K_X+B) >0$.
Let $Z$ be the canonical model of $(X, B)$. We consider
$f: X \dashrightarrow Z$ the Iitaka fibration induced by the linear system $|m(K_X+B)|$ for $m$ large and divisible enough.
Thanks to \cite{BCHM} in the projective case and \cite{Fuj13} for the K\"ahler case the space $Z$ is normal.
After desingularisation $f$ induces a fibration between two compact K\"ahler manifolds.
For simplicity we will denote the new map by $f: X\to Z$.  
In general the torsion-free sheaf $f_\star (m(K_{X/Z}+B))$ is not locally free.
This is the case for the reflexive hull $(f_\star (m(K_{X/Z}+B)))^{\star\star}$.
\medskip

\noindent We now recall the definition of the Narashimhan-Simha metric on the line bundle $(f_\star (m(K_{X/Z}+B)))^{\star\star}$.

\begin{defin}
Let $Z_0 \subset Z$ be a locus such that $f$ is smooth over $Z_0$ and $B|_{X_z}$ is klt for every $z\in Z_0$. 
Let $z\in Z_0$ and let $s\in (f_\star (m(K_{X/Z}+B)))_z = H^0 (X_z, m K_{X_z} +m B)$. We define the Narashimhan-Simha metric
$$ \|s\|^2 _{h_m} := (\int_{X_z} |s|^{\frac{2}{m}} _{h_B})^m ,$$
where $h_B$ is the canonical singular hermitian metric with respect to the divisor $B$.
Thanks to \cite{BP}, $h_m$ can be canonically extended as a possible singular metric on 
$$(Z, (f_\star (m(K_{X/Z}+B)))^{\star\star}) .$$
We call it the $m$-th Narashimhan-Simha metric.
\end{defin}

\begin{rema}\label{indepofm}
We can easily check that the weight of $h_m$ is locally integrable 
over the locus where $f_\star (m(K_{X/Z}+B))$ is locally free.  
Moreover, the pair $(Z, \frac{1}{m} (f_\star (m(K_{X/Z}+B)))^{\star\star}, h_m ^{\frac{1}{m}})$ is independent of the choice of $m$, namely
for any two $m_1, m_2$ large and sufficiently divisible, we have an isometry 
$$(\frac{1}{m_1} (f_\star (m_1 (K_{X/Z}+B)))^{\star\star}, h_{m_1} ^{\frac{1}{m_1}} )\equiv_\mathbb{Q} (\frac{1}{m_2} (f_\star (m_2 (K_{X/Z}+B)))^{\star\star}, h_{m_2} ^{\frac{1}{m_2}} ).$$
\end{rema}
\medskip

By construction, the pair $(Z, \frac{1}{m} (f_\star (m(K_{X/Z}+B)))^{\star\star}, h_m ^{\frac{1}{m}})$ satisfies \ref{dbig} and \ref{fg} 
However, it does not satisfy in general \ref{nef}, roughly because of the codimension two subsets of the base $Z$ whose $f$-inverse image have codimension one.
This situation can be improved by a trick due to \cite{Vieh83} which we now recall.
By Hironaka's flattening theorem cf. \cite[Lemma 7.3]{Vieh83}, we can find a morphism $f' : X' \to Z'$ between two compact K\"ahler manifolds and satisfies the following commutative diagram
\begin{center}
\begin{tikzcd}
X' \arrow[swap]{d}{\pi} \arrow{r}{f'} & Z' \arrow{d}{\mu}\\
X \arrow{r}{f} &  Z
\end{tikzcd}
\end{center}
such that the morphisms $\pi$ and $\mu$ are bimeromorphic, and moreover, each hypersurface $W\subset X'$ such that $\codim_{Y'} f' (W) \geq 2$ is $\pi$-contractible,
i.e., $\codim_X \pi (W) \geq 2$.

We denote by $\hat B$ the strict transform of $B$ by $\pi$, and write $K_{X'}+\hat B=\pi^*(K_X+B)+\sum a_i E_i$. We set 
$B':=\hat B+\sum_{a_i<0}(-a_i)E_i$. Then $(X', B')$ is klt.
Let us choose $m$ large enough so that $\cF_m := f' _\star (m (K_{X'/Z'} +B'))$ is non-zero. Then, $\cF_m$ is a torsion free sheaf of rank one on $Z'$ and its reflexive hull $\cF_m ^{\star\star}$ is a line bundle that we can equip with the $m$-th Narashimhan-Simha metric $h_m$. Thanks to Remark \ref{indepofm}, 
the following $\Q$-line bundle and the metric 
$$L  :=  \frac 1 m  f' _\star (m (K_{X'/Z'} +B'))^{\star\star} ,\qquad h := h_{m} ^{\frac{1}{m}}$$
are independent of the choice of $m$.
Let $\phi$ be the weight of $h$.
\medskip

\noindent We have the following statement, which connects our current setting with Definition/Proposition
\ref{KElog}.
\begin{prop}
\label{integrali} 
In the above setting, the following holds. 
\begin{enumerate}
\item  \label{bp} $i\Theta_{\phi} (L) \geq 0$ on $Z'$.
\item $K_{Z'} + L$ is a big $\mathbb{Q}$-line bundle and for any $m\in \mathbb{N}$ sufficiently divisible, the algebra 
$$\bigoplus_{p\geq 0} H^0 (Z', pm (K_{Z'}+L))$$ is finitely generated.
\item For every $p\in \N$ sufficiently divisible and every $s\in H^0 (Z', p ( K_{Z'} + L))$, we have 
$$\int_{Z'} |s|^{\frac{2}{p}} e^{-\phi}< +\infty .$$
\end{enumerate}
\end{prop}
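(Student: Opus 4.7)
The plan is to address the three items separately, each relying on the interplay between the defining properties of the Narashimhan-Simha metric and the klt pair $(X', B')$.

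For item~\ref{bp}, semi-positivity of $i\Theta_{\phi}(L)$ over the Zariski open locus $Z'_0 \subset Z'$ where $\cF_m$ is locally free and $f'$ is smooth will follow from the positivity theorem for direct images of twisted relative pluricanonical bundles applied to the klt pair $(X', B')$, cf.\ \cite{BP}. By construction of $f':X'\to Z'$ via Hironaka flattening, the complement $Z'\ssm Z'_0$ has codimension at least two. Extension of $\phi$ across this locus as a psh weight on $L$ is then standard, via the Riemann extension theorem for psh functions, using local upper boundedness of the Narashimhan-Simha weight (cf.\ Remark~\ref{indepofm}).

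For the second item, the projection formula on $Z'_0$ together with $K_{X'/Z'}=K_{X'}-(f')^{\star}K_{Z'}$ gives
\[
\cF_m\otimes \mathcal O_{Z'}(mK_{Z'})\cong f'_\star\bigl(m(K_{X'}+B')\bigr),
\]
and taking reflexive hulls extends this to a global isomorphism $m(K_{Z'}+L)\cong f'_\star(m(K_{X'}+B'))^{\star\star}$. Hence
\[
H^0\bigl(Z', m(K_{Z'}+L)\bigr)\cong H^0\bigl(X', m(K_{X'}+B')\bigr)
\]
for every sufficiently divisible $m$. Finite generation of the algebra on the left therefore reduces to that of the log canonical ring of $(X', B')$, which holds by \cite{BCHM} (and \cite{Fuj13} in the K\"ahler setting) since $(X', B')$ is klt. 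For bigness, by construction $f'$ is birational to the Iitaka fibration of $K_{X'}+B'$, so $h^0(X', m(K_{X'}+B'))\sim c\cdot m^{\dim Z'}$ with $c>0$, which forces $K_{Z'}+L$ to have Iitaka dimension equal to $\dim Z'$, and so to be big.

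Item (3) is essentially an unraveling of the definition of the Narashimhan-Simha metric. Under the isomorphism above, let $\sigma \in H^0(X', p(K_{X'}+B'))$ be the section corresponding to $s$. By Remark~\ref{indepofm}, the metric $h$ on $L$ equals $h_p^{1/p}$, so its weight is $\phi=\phi_p/p$, and the defining formula
\[
\|\tilde\sigma\|^2_{h_p(z)}=\Bigl(\int_{X'_z}|\tilde\sigma|^{2/p}e^{-\phi_{B'}}\Bigr)^p,\qquad \tilde\sigma\in\cF_p(z),
\]
yields in local trivializations on $Z'_0$ the pointwise identity $|s|^{2/p}e^{-\phi}=\bigl(\int_{X'_z}|\sigma|_{X'_z}|^{2/p}e^{-\phi_{B'}}\bigr)\,dV_{Z'}$ up to smooth positive factors. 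Fubini then gives
\[
\int_{Z'}|s|^{2/p}e^{-\phi}\le C\int_{X'}|\sigma|^{2/p}e^{-\phi_{B'}},
\]
and the right-hand side is finite because $(X', B')$ is klt: locally $|\sigma|^{2/p}e^{-\phi_{B'}}$ behaves like $\prod|s_{D_i}|^{-2b_i}$ times a smooth bounded density, which is $L^1_{\rm loc}$ exactly when every coefficient $b_i$ of $B'$ is strictly less than one. This is precisely where the klt modification $B'=\hat B+\sum_{a_i<0}(-a_i)E_i$ plays its essential role. The principal technical obstacle I anticipate lies in this last item, namely the careful bookkeeping to verify the Fubini identity on $Z'_0$ and to confirm that it extends to all of $Z'$ without hidden boundary contributions from $Z'\ssm Z'_0$; however, since that locus has codimension at least two and the Narashimhan-Simha construction is reflexive in nature, these issues should ultimately be cosmetic.
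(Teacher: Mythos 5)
Your item (1) and your overall strategy coincide with the paper's. The genuine gap is the step ``hence $H^0(Z', m(K_{Z'}+L)) \cong H^0(X', m(K_{X'}+B'))$''. The projection formula does give $m(K_{Z'}+L)\cong \bigl(f'_\star(m(K_{X'}+B'))\bigr)^{\star\star}$ as line bundles, but passing to the reflexive hull changes global sections: for a rank-one torsion-free sheaf $\mathcal G$ on $Z'$ one only has an inclusion $H^0(Z',\mathcal G)\subseteq H^0(Z',\mathcal G^{\star\star})$, and a section of the hull pulls back to a section of $m(K_{X'}+B')$ only over the locus where $\mathcal G$ is locally free. The complement of that locus has codimension two in $Z'$, but its $f'$-preimage in $X'$ can contain \emph{divisors}, so the pulled-back section may acquire poles there. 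The correct comparison is $m(K_{X'}+B')+E_- = (f')^{*}\bigl(m(K_{Z'}+L)\bigr)+E_+$ for effective divisors $E_\pm$, giving $H^0(Z', pm(K_{Z'}+L)) = H^0(X', pm(K_{X'}+B')+pE_-)$; to identify this with $H^0(X, pm(K_X+B))$ one must use that $E_-$ is $\pi$-contractible, which is precisely what the Viehweg/Hironaka flattening step was set up to guarantee. Without this, finite generation of $\bigoplus_p H^0(Z', pm(K_{Z'}+L))$ does not follow from knowing that it merely \emph{contains} the finitely generated log canonical ring of $(X',B')$. (Bigness only needs the easy inclusion, so that part of your item (2) stands.)

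The same issue propagates to your item (3): the object ``corresponding to $s$'' is not a section of $p(K_{X'}+B')$ but $(f')^*s\otimes s_{E_+}^{\otimes p/m}\in H^0\bigl(X', p(K_{X'}+B')+\tfrac{p}{m}E_-\bigr)$, and when one unravels the Narashimhan--Simha weight the integrand carries the singular factor $e^{-\frac1m\phi_{E_-}}$ in addition to $e^{-\phi_{B'}}$. Integrability therefore does not follow from klt-ness of $B'$ alone; one must also check that the section vanishes along $E_-$ to order at least $\tfrac{p}{m}E_-$, which again rests on the $\pi$-contractibility of $E_-$ — this is how the paper concludes. So the difficulty you dismiss at the end as ``cosmetic'' (the contribution from $Z'\setminus Z'_0$) is in fact the crux of items (2) and (3): it is codimension two downstairs but divisorial upstairs.
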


\begin{proof}
The first item is a direct consequence of \cite{BP}. 

\medskip

\noindent
For the second term,  let $m\in \N$ sufficiently divisible such that for every $p\in\N$, $H^0 (X', p m (K_{X'} +B'))$ is generated by $\otimes^p H^0 (X',  m (K_{X'} +B'))$.
By the construction of $L$, there exist two effective divisors $E_{+}$ and $E_{-}$ on $X'$ such that 
\begin{equation}
\label{equalitydirect}
m (K_{X'} +B') + E_{-} = m \cdot (f')^* ( K_{Z'} +L) + E_{+} ,
\end{equation}
and for every $\tau\in H^0 (X' , m (K_{X'} +B'))$, $\tau$ vanishes over $E_{+}$.  
As a consequence, for every $s\in H^0 (X', pm (K_{X'} +B'))$, $s$ vanishes over $p [E_{+}]$.
Note that $(f')_* (E_{-})$ is supported in the non locally free locus of $f' _\star (m (K_{X'/Z'} +B'))$. Then $E_{-}$ is $\pi$-contractible. Together with the above argument, for every $p\in\N$, we have the natural isomorphisms 
$$H^0 (Z', pm (K_{Z'} +L) ) = H^0 (X',  pm (K_{X'} +B') + p E_{-}) =H^0 (X, pm (K_X +B)) .$$
As a consequence,  $K_{Z'} +L$ is big and the algebra 
$$\bigoplus_{p\geq 0} H^0 (Z', pm (K_{Z'}+L))$$ is finitely generated.

\medskip

\noindent
For the third term,
let $s_{E_{+}}$ be the canonical section of $E_{+}$ and let $h_{B'}$ (resp. $h_{E_{-}}$) be the canonical singular metric on $B'$ (resp. $E_{-}$). As $p$ is sufficient divisible, we can assume that $p_1 :=\frac{p}{m}\in\mathbb{N}$.
Thanks to \eqref{equalitydirect}, we have 
$$(f')^* (s) \otimes s_{E_{+}} ^{\otimes p_1}\in H^0 (X', p_1 ( m K_{X'} + m B' + E_{-})) .$$
By the definition of the Narashimhan-Simha metric, we have
$$\int_{Z'} |s|^{\frac{2}{p}} e^{-\phi} =\int_{X'} |(f')^* (s) \otimes s_{E_{+}} ^{\otimes p_1}|^\frac{2}{p} _{h_{B'} , h_{E_{-}}} .$$
Note that $E_{-}$ is $\pi$-contractible, $(f')^* (s) \otimes s_{E_{+}} ^{\otimes p_1}$ vanishes along $E_{-}$ of order at least $p_1 E_{-}$.
Together with the fact that $B'$ is klt, we have 
$$\int_{X'} |(f')^* (s) \otimes s_{E_{+}} ^{\otimes p_1}|^\frac{2}{p} _{h_{B'}, h_{E_{-}}} < +\infty .$$
The proposition is proved.
\end{proof}

\begin{rema}
Using the above argument, we know that $e^{-\phi}$ is in $L^1 _{\rm loc} (Z' \setminus f'  (E_{-}))$.
\end{rema}

Together with Proposition~\ref{integrali}, we get

\begin{coro}
\label{cor:ns}
With the above notation,  $Z'$ admits a natural Kähler-Einstein metric $\omke$ in the sense of Definition~\ref{KElog}. This metric satisfies 
\begin{equation}
\label{omns}
\Ric \omke = -\omke+  \Theta_{\phi}(L) \quad \mbox{on} \, \, Z'. 
\end{equation}
We call $\omke$ the K\"ahler-Einstein metric assoicated to the Iitaka fibration of $(X, B)$.
\end{coro}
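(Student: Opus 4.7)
The plan is to apply Definition-Proposition~\ref{KElog} directly to the triple $(Z', L, h)$, since the three conditions \ref{dbig}, \ref{fg}, \ref{nef} required there are exactly what Proposition~\ref{integrali} establishes. First, item~\ref{bp} of Proposition~\ref{integrali} gives $i\Theta_\phi(L)\ge 0$ on $Z'$, so $(L,h)$ is a singular hermitian $\mathbb Q$-line bundle with positive curvature current, which is the basic hypothesis of Definition-Proposition~\ref{KElog}. Then item~(2) of Proposition~\ref{integrali} provides both the bigness of $K_{Z'}+L$ (condition~\ref{dbig}) and the finite generation of the canonical ring of $K_{Z'}+L$ (condition~\ref{fg}), while item~(3) gives the required $L^{2/p}$-integrability for sections of $p(K_{Z'}+L)$, which is exactly condition~\ref{nef}.

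With all three hypotheses of Definition-Proposition~\ref{KElog} verified on $Z'$, the statement provides a unique closed positive $(1,1)$-current $\omke \in c_1(K_{Z'}+L)$ of full Monge-Ampère mass which satisfies the weak Kähler-Einstein equation
\[ \Ric \omke = -\omke + \Theta_\phi(L) \]
on $Z'$. This is precisely \eqref{omns}. Uniqueness and minimal singularities of $\omke$ are immediate consequences of the corresponding properties in Definition-Proposition~\ref{KElog}.

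The one point worth double-checking is that the $\mathbb Q$-line bundle $L$ and its metric, which were defined in terms of a fixed large and divisible $m$, give rise to data that is genuinely independent of $m$; this has been addressed in Remark~\ref{indepofm}, which shows that the pair $(\frac 1 m \cF_m^{\star\star}, h_m^{1/m})$ is $\mathbb Q$-isometrically the same for any sufficiently large and divisible $m$. Consequently $\omke$ is intrinsically attached to the Iitaka fibration of $(X,B)$ through the flattening model $(X',Z',f')$, justifying the terminology. Since all the analytical work was carried out in Definition-Proposition~\ref{KElog} and in Proposition~\ref{integrali}, there is no serious obstacle left at this stage: the corollary is essentially a formal consequence.
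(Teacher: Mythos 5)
Your proposal is correct and follows exactly the route the paper takes: the Corollary is stated immediately after Proposition~\ref{integrali} with the phrase ``Together with Proposition~\ref{integrali}, we get,'' i.e.\ it is obtained by feeding the three verified hypotheses into Definition-Proposition~\ref{KElog}, precisely as you do. The only cosmetic discrepancy is that item (3) of Proposition~\ref{integrali} is stated for $p$ sufficiently divisible whereas condition~\ref{nef} asks for all $p$, but this is harmless (replace $s$ by a suitable tensor power) and the paper glosses over it in the same way.
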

\bigskip

\subsection{Relation with the canonical metrics.}\label{relatsubsect}

Let $X$ be a compact K\"ahler manifold and let $B$ be a $\Q$-divisor with snc support such that $(X,B)$ is klt. We suppose that $\kappa (K_X +B) \geq 1$. Thanks to \cite{BCHM,Fuj13}, 
the canonical model $Z$ of $(X, B)$ is normal. After blowing up the indeterminacy locus of the Iitaka fibration, we can suppose that
the Iitaka fibration of $K_X+B$ induces a morphism $f:X\to Z$
and there is  an ample $\Q$-line bundle $A$ on $Z$ such that 
$$K_X +B = f^* A +E_X$$
is a Zariski decomposition for some effective $\mathbb{Q}$-divisor $E_X$ with normal crossing support on $X$. In that context, the analogue of Kähler-Einstein metrics for the pair $(X,B ,E_X)$, sometimes called canonical metrics, are objects that are singular metrics $\omcan$ on $Z$ satisfying a "canonical" Monge-Ampère equation. They were first introduced by Song-Tian when $B=E_X=0$ \cite{ST12} and later generalized by Eyssidieux-Guedj-Zeriahi \cite[Definition 2.2, 2.7]{EGZ16}.

\medskip

Let us recall the definition of the canonical metric in this setting.
One first picks a smooth hermitian $h_A=e^{-\phi_A}$ on $A$ with positive curvature $\chi:=\ddc \phi_A$. Then, one introduces a measure $\mu_{h_A, h_E}$ on $X$ by setting $$\mu_{h_A, h_E}:=\frac{(\sigma \wedge \bar \sigma)^{\frac 1 N}e^{-\phi_B}}{|\sigma|_{f^*h_A, h_E}^{2/N}}$$ where $\sigma$ is a local trivialization of $N(K_X+B)$ for $N$ divisible enough, $\phi_B$ is the canonical singular weight on $B$ and $h_E$ is the canonical singular metric on $E$. Finally, one defines $\omcan:=\chi+\ddc \varphi_{\rm can}$ as the unique positive current on $Z$ with bounded potentials such that 
\begin{equation}
\label{canmet}
(\chi+\ddc \varphi_{\rm can})^{\dim Z} = e^{\varphi_{\rm can}}f_*\mu_{h_A, h_E} .
\end{equation}
Note that the singularity of $h_E$ gives rise the zero locus of $\mu_{h_A, h_E}$. One can check that the measure $f_*\mu_{h_A, h_E}$ has $L^{1+\ep}$ density with respect to a smooth volume form, cf. \cite[Lemma 2.1]{EGZ16}. Moreover, the canonical metric $\omcan$ is independent of 
the choice the hermitian metric $h_A$, cf. \cite[Lemma 2.4]{EGZ16}.

\medskip

By applying the construction in subsection \ref{ssec:intkod} to $f: X\rightarrow Z$, 
we can find a morphism $f' : X' \to Z'$ between two compact K\"ahler manifolds and satisfies the following commutative diagram
\begin{center}
\begin{tikzcd}
X' \arrow[swap]{d}{\pi} \arrow{r}{f'} & Z' \arrow{d}{\mu}\\
X \arrow{r}{f} &  Z
\end{tikzcd}
\end{center}
such that the morphisms $\pi$ and $\mu$ are bimeromorphic, and moreover, each hypersurface $W\subset X'$ such that $\codim_{Y'} f' (W) \geq 2$ is $\pi$-contractible,
i.e., $\codim_X \pi (W) \geq 2$. Following the notations in Proposition \ref{integrali},
for $m$ large enough, we can equip $L :=\frac 1 m  f' _\star (m (K_{X'/Z'} +B'))^{\star\star}$ with the  Narashimhan-Simha type metric $e^{-\phi}$.
Thanks to Corollary~\ref{cor:ns}, we can find the "NS-type" Kähler-Einstein metric $\omke$ on $Z'$ which satisfies
$$\Ric \omke = -\omke+  \frac{i}{2\pi}\Theta_{\phi}(L) \quad \mbox{on} \, \, Z'. $$

\medskip

We now establish a relation between the Kähler-Einstein metric $\omke$ and the canonical metric $\omcan$. 

\begin{prop}
\label{sameke}
With the notation above, let $\omke$ be the Kähler-Einstein metric on $Z'$ solution of \eqref{omns} and let $\omcan$ be the canonical metric on $Z$ solution of \eqref{canmet}. 
Then, one has $\mu_*\omke = \omcan$. 
\end{prop}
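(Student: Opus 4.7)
The plan is to compare $\omke$ and $\mu^*\omcan$ as positive currents on $Z'$, show they differ by an effective $\mu$-exceptional divisor, and conclude by pushing forward via $\mu$ using the uniqueness of the Kähler-Einstein metric from Definition-Proposition~\ref{KElog}.

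First I would establish the bundle identity $K_{Z'}+L = \mu^*A + G$ for some effective $\mu$-exceptional $\Q$-divisor $G$ on $Z'$. Combining the Zariski decomposition $K_X+B = f^*A + E_X$ with the relation $K_{X'}+B' = \pi^*(K_X+B) + F$ for an effective $\pi$-exceptional divisor $F$, and with the defining identity \eqref{equalitydirect}, one obtains on $X'$
$$(f')^*(K_{Z'}+L-\mu^*A) \equiv_{\Q} \pi^*E_X + F + \tfrac{1}{m}(E_- - E_+).$$
Since the left-hand side is $f'$-vertical while the right-hand side is \emph{a priori} not, the horizontal components must cancel; combined with the natural isomorphism $H^0(Z',m(K_{Z'}+L))\simeq H^0(Z,mA)$ obtained in the proof of Proposition~\ref{integrali}, this forces the $\Q$-divisor $K_{Z'}+L-\mu^*A$ to be effective and supported in the $\mu$-exceptional locus.

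Next, let $\phi_G$ denote the canonical singular weight on $G$ and consider the candidate current $\widetilde\omke := \mu^*\omcan + [G] \in c_1(K_{Z'}+L)$, whose local weight $\widetilde\phi_{\rm KE} := \mu^*(\phi_A + \vp_{\rm can}) + \phi_G$ has minimal singularities in its class. I would verify that $\widetilde\omke$ satisfies the Monge-Ampère characterization of $\omke$ from Remark~\ref{rem:defi}(d),
$$\langle(\ddc\widetilde\phi_{\rm KE})^{\dim Z'}\rangle = e^{\widetilde\phi_{\rm KE} - \phi},$$
where $\phi$ is the Narashimhan-Simha weight on $L$. Using the compatibility of the non-pluripolar Monge-Ampère with birational pullback via $\mu$ (the pluripolar set $\mathrm{Supp}(G)$ is automatically discarded) together with \eqref{canmet}, this equation reduces to the measure identity
$$\mu^*\bigl(f_*\mu_{h_A,h_E}\bigr) = e^{\mu^*\phi_A + \phi_G - \phi} \qquad (\star)$$
on $Z'$, both sides being naturally weights on $K_{Z'}$ through the decomposition $L = \mu^*A + G - K_{Z'}$ from Step~1. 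The identity $(\star)$ would be proved fiberwise: using that $\pi$ is a biholomorphism on a Zariski-dense open subset and that $f^*\sigma_A$ is constant along $X_z$, the integrand of $\mu_{h_A,h_E}$ decomposes on $X_z$ as $|\sigma_A(z)|_{h_A}^{-2/N}\cdot (\sigma|_{X_z}\wedge\bar\sigma|_{X_z})^{1/N}e^{-\phi_B}/|\sigma_E|^{2/N}_{h_E}$; the first factor pulls back to $e^{-\mu^*\phi_A}$, and the fiber integral of the remainder reproduces the defining integral of the Narashimhan-Simha weight $\phi$ on $(f')_*(m(K_{X'/Z'}+B'))^{\star\star}$ modulo the divisorial contributions from $\pi^*E_X$, $F$ and $E_\pm$, which combine precisely into the factor $e^{\phi_G}$.

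Once $(\star)$ is proved, $\widetilde\omke$ satisfies the same Monge-Ampère equation as $\omke$, lies in the same class $c_1(K_{Z'}+L)$, and has the same total mass $\vol(K_{Z'}+L)$, so the uniqueness statement in Definition-Proposition~\ref{KElog} yields $\widetilde\omke = \omke$. Taking $\mu_*$ and using $\mu_*[G]=0$ since $G$ is $\mu$-exceptional, one concludes $\mu_*\omke = \omcan$. The main obstacle is the measure identity $(\star)$: it requires a delicate book-keeping of how the various singular contributions from $B$, $B'$, $E_X$, $F$, $E_\pm$ and $G$ transform under the birational modifications $\pi$ and $\mu$, and verifying that they recombine into the Narashimhan-Simha integrand with exactly the right factors $e^{\mu^*\phi_A}$ and $e^{\phi_G}$.
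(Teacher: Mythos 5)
Your plan coincides with the paper's proof: the decomposition $K_{Z'}+L\equiv_\Q \mu^*A+E_{Z'}$ with $E_{Z'}$ effective and $\mu$-exceptional, the fiberwise comparison of the Narashimhan--Simha weight with the density of $f_*\mu_{h_A,h_E}$ (your identity $(\star)$ is exactly the paper's $\frac{d\nu}{d\lambda}=e^{\vp_{\mu^*A}-\vp+\vp_{E_{Z'}}}$, obtained there by the explicit trivialization bookkeeping you describe), and the conclusion by minimal singularities plus uniqueness of the Monge--Amp\`ere solution followed by $\mu_*[E_{Z'}]=0$. The only cosmetic difference is that the paper derives the effectivity and $\mu$-exceptionality of $E_{Z'}$ from the local freeness of $f'_*(mE_{X'}+E_-)$ rather than from your vertical/horizontal cancellation argument.
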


\begin{proof}
Recall that by \eqref{equalitydirect}, we have
\begin{equation}
\label{addar}
m (K_{X'} +B') + E_{-} = (f')^* (m( K_{Z'} +L)) + E_{+} ,
\end{equation}
and by construction, we have
\begin{equation}
\label{addar2}
m(K_{X'}+B'-E_{X'})= (f'\circ\mu)^* mA ,
\end{equation}
for some $\Q$-effective divisor $E_{X'}$ such that $\pi_\star (E_{X'})= E_X$.

\medskip

We first establish the relation between $\mu^*A$ and $K_{Z'}+L$.
Remember that $f'_*\mathcal O_{X'}(kE_+)\simeq \mathcal O_{Z'}$ for any integer $k$ such that $kE_+$ has integral coefficients. We deduce that $m(K_{Z'}+L) = \mu^*(m A) \otimes f'_*(mE_{X'}+E_-)$. In particular, we get that $f'_*(mE_{X'}+E_-)$ is a locally trivial sheaf of rank one, hence associated to a divisor $m E_{Z'}$; it is clearly effective and $\mu$-exceptional, as $\Supp (f')_* (m E_{X'}) \subset \mu^{-1}(Z_{\rm sing})$ and $\codim_{Z'}f'(E_-) \ge 2$. 
Then we have the Zariski decomposition
\begin{equation}
\label{zar2}
K_{Z'}+L \equiv_\Q \mu^*A+E_{Z'}
\end{equation}
By construction, we have $f'^*(mE_{Z'})+E_+=mE_{X'}+E_-$.

\medskip

Now, take $U\subset Z'$ a small coordinate open subset, and let $e_{\mu^* mA}\in H^0(U,\mu^* mA)$ and $e_{mE_{Z'}}\in H^0(U, mE_{Z'})$ be trivializations of $\mu^* mA$ and $mE_{Z'}$ respectively. Let $d\underline z$ be a trivialisation of $K_{Z'}$ over $U$. 
They induce a trivialization $e\in H^0(U, mL)$ of $m L$ such that 
\begin{equation}\label{trivequat}
d\underline z^{\otimes m}\otimes e = e_{\mu^* mA}\otimes e_{mE_{Z'}} .
\end{equation} 
Set $e^{-\varphi_{\mu^*A}}:=|e_{\mu^* mA}|^\frac{2}{m} _{\mu^* m h_A}$, $e^{-\vp_{E_{Z'}}}:= |e_{mE_{Z'}}|^{2/m}_{e^{-\phi_{mE_Z'}}}$ and $e^{-\varphi}:=|e|^{2/m}_{e^{-\phi}}$. 
Let 
$$\sigma:=(f')^*e_{\mu^* mA}\in H^0(f'^{-1}(U),m( K_{X'}+B'-E_{X'})) .$$ 
Thanks to \eqref{addar} and \eqref{trivequat},  we have
$$\tau:=\sigma\otimes (f')^*( e_{m Z'}) \otimes  s_{E_+} \in H^0(f'^{-1}(U), m(K_{X'}+B')+E_-) ,$$
and 
\begin{equation}\label{trivequatadd}
e^{-\vp (z)}=\int_{X'_z} |\tau|^{2/m}e^{-\phi_{B'}-\frac 1 m \phi_{E_-} }
= e^{-\vp_{E_{Z'}}}\cdot \int_{X'_z} |\sigma|^{2/m}e^{-\phi_{B'}+\phi_{E_{X'}}} . 
\end{equation}

\medskip

The canonical measure $\nu$ on $Z'$ has density with respect to the Lebesgue measure $d\lambda= |d\underline z|^2$ given by the formula 
$$\frac{d\nu}{d\lambda}(z) = \int_{X'_z} \frac{(\sigma \wedge \bar \sigma)^{1/m}e^{- \phi_{B'}+\phi_{E_{X'}} }}{|\sigma|^\frac{2}{m}_{f'^*\mu^* h_{A}}} = e^{\vp_{\mu^*A}} \int_{X'_z} |\sigma|^{2/m}e^{ - \phi_{B'}+\phi_{E_{X'}}}$$
for $z\in Z'$ generic. Together with \eqref{trivequatadd}, we get
$$\frac{d\nu}{d\lambda} = e^{\vp_{\mu^*A}- \vp + \vp_{E_{Z'}}}$$
Therefore, $\om:=\mu^*\omcan+[mE_{Z'}]$ satisfies
$$\Ric \om  = - \mu^*\omcan + \ddc \vp - [E_{Z'}] = -\om + \frac{i}{2\pi}\Theta_{\phi}(L)$$
As $\om\in c_1(K_{Z'}+L)$ has minimal singularities by the Zariski decomposition \eqref{zar2} and satisfies the same Monge-Ampère equation as $\omke$, one deduces that $\om=\omke$, i.e., 
$$\omke = \mu^*\omcan +[E_{Z'}] .$$ 
As $E_{Z'}$ is $\mu$-exceptional, the Proposition is proved.  
\end{proof}

\begin{rema}
The proof of Proposition~\ref{sameke} above shows the more precise identity $\omke= \mu^*\omcan +E_{Z'}$ for some explicit divisor $E_{Z'}$ on $Z'$. 
\end{rema}

\subsection{Relative Kähler-Einstein and canonical metrics, main Theorem } 
\label{ssec:relative}

To finish this section, we now discuss the positivity of the relative Kähler-Einstein or the canonical metrics when the fiber is of intermediate Kodaira dimension.

\begin{theo}
\label{corb}
Let $p: X\to Y$ be a projective fibration between two K\"ahler manifolds of relative dimension $n$ and let $B$ be an effective klt $\Q$-divisor on $X$. We assume that for a generic fiber $X_y$, the log Kodaira dimension satisfies $\kappa (K_{X_y}+B_y) >0$.  
Let $f: X\dashrightarrow Z$ be the relative Iitaka fibration of $K_{X/Y}+B$, and let $f':X'\rightarrow Z'$ a birational model of $f$ such that $X'$ and $Z'$ are smooth.
\begin{center}
\begin{tikzcd}
X'\arrow{d} \arrow{rr}{f'} & & Z' \arrow{d}\\
X\arrow[dashed]{rr}{f}\arrow[swap]{rd}{p} & & Z \arrow{dl}\\
& Y &
\end{tikzcd}
\end{center}
For $y$ generic, let $\omega_{{\rm can}, y}$ be the canonical metric on $Z'_y$ of the pair $(X'_y,B'_y)$; it induces a current $\omcan°$ over the smooth locus of $Z'\to Y$. 

\noindent
Assuming that Conjecture~\ref{conj} holds, then the current $\omcan°$ is positive and extends canonically to a closed positive current on $Z'$.

\end{theo}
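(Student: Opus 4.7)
The plan is to reduce Theorem~\ref{corb} to Conjecture~\ref{conj} by adapting to the relative setting over $Y$ the correspondence between the canonical metric and a Narashimhan--Simha Kähler--Einstein metric, established in the absolute case in Proposition~\ref{sameke}. First, I would perform the construction of Section~\ref{relatsubsect} \emph{relative to $Y$}: starting from the relative Iitaka fibration $f': X' \to Z'$, introduce the $\Q$-line bundle $L := \frac{1}{m}\bigl(f'_\star(m(K_{X'/Z'} + B'))\bigr)^{\star\star}$ on $Z'$, endowed with the Narashimhan--Simha metric $e^{-\phi}$, where $B'$ is the adjusted boundary divisor of Section~\ref{relatsubsect}. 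By the relative positivity theorem of Berndtsson--P\u{a}un used in Proposition~\ref{integrali}, the weight $\phi$ is plurisubharmonic on the total space $Z'$.

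Next, I would verify that the triple $(Z' \to Y, L, e^{-\phi})$ satisfies Setting~\ref{set} after possibly shrinking $Y$ to a Zariski open subset: applying Proposition~\ref{integrali} fiberwise yields relative bigness of $K_{Z'/Y} + L$ over $Y$, finite generation of the fiberwise section rings, and the integrability condition~\ref{nefcon}. Under the hypothesis that Conjecture~\ref{conj} holds, one then obtains a positively curved singular hermitian metric $e^{-\phi_{\rm KE}}$ on $K_{Z'/Y} + L$ over $Z'°$ which extends canonically to a positively curved metric on all of $Z'$; let $\omke$ denote its curvature current.

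The final step is to identify $\omke$ with $\omcan°$ modulo a globally defined divisor, by promoting Proposition~\ref{sameke} to families. On each generic fiber $Z'_y$, that proposition yields $\omkey = \omega_{{\rm can}, y} + [E_{Z'_y}]$, where $E_{Z'_y}$ is the effective $\mu_y$-exceptional $\Q$-divisor coming from the fiberwise Zariski decomposition $K_{Z'_y} + L_y \equiv_\Q \mu_y^\star A_y + E_{Z'_y}$ of~\eqref{zar2}. After possibly shrinking $Y$ and performing a further birational modification of $Z'$ so as to obtain a \emph{relative} Zariski decomposition $K_{Z'/Y} + L \equiv_\Q \mu^\star A + E_{Z'}$ over $Y°$, the fiberwise $E_{Z'_y}$ glue into a globally defined effective $\Q$-divisor $E_{Z'}$ on $Z'$. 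One then gets the identity $\omcan° = \omke - [E_{Z'}]$ on $Z'°$; since the right hand side coincides with the pull-back by $Z' \to Z$ of the fiberwise canonical current on the relative canonical model $Z/Y$, it is positive (being a pull-back of a positive current), and its canonical extension to $Z'$ follows from the extension of $\omke$ provided by Conjecture~\ref{conj} combined with the globality of~$E_{Z'}$. The main obstacle, in my view, will be globalizing the relative Zariski decomposition to obtain a well-defined effective divisor $E_{Z'}$ on all of $Z'$, and handling the coherent extension of this picture across the discriminant locus $Y \setminus Y°$.
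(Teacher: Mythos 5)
Your proposal follows essentially the same route as the paper's proof: construct the relative Narasimhan--Simha bundle $(L,e^{-\phi})$ on $Z'$, check that $(Z'\to Y, L, e^{-\phi})$ fits Setting~\ref{set} via Proposition~\ref{integrali} so that Conjecture~\ref{conj} applies, and then transfer positivity and extension to $\omcan°$ through the fiberwise identity of Proposition~\ref{sameke}. The only point to make explicit in your last step is that positivity of the current downstairs on $Z$ is not known a priori but is obtained by pushing forward $\omke\ge 0$ by $\mu$ (using that $E_{Z'}$ is $\mu$-exceptional) and then pulling back, which is exactly how the paper concludes.
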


The proof of Theorem~\ref{corb} consists mostly in putting together all the constructions explained above. By using \cite{BCHM, Fuj13}, the canonical ring 
$$R(X_y ,B_y)=\bigoplus_{m\ge 0 } H^0(X_y ,\lfloor m(K_{X_y} +B_y) \rfloor)$$ 
is finitely generated.
Together with the fact that $h^0 (X_y , m (K_{X_y} +B|_{X_y}))$ is constant with respect to $y \in Y°$ for some $m$ large enough,  
one can construct the relative Iitaka fibration $f:X\dashrightarrow Z:=\mathrm{Proj}(p_*(m(K_{X/Y}+B))$. Thanks to the Subsection~\ref{relatsubsect}, 
we can find a desingularization $f' : X'\to Z'$ fitting the commutative diagram
\begin{center}
\begin{tikzcd}
X'\arrow{d}[swap]{\pi} \arrow{rr}{f'} & & Z' \arrow{d}{\mu}   \arrow[ddl, bend left = 80, "q'"]\\
X\arrow[dashed]{rr}{f}\arrow[swap]{rd}{p} & & Z \arrow{dl}{q}\\
& Y &
\end{tikzcd}
\end{center}
such that we have a $f'$-Zariski decomposition over $Y_0$
$$ K_{X'} +B' \equiv_\Q  (f'\circ \mu)^* A+E \qquad\text{on} \quad  (f')^{-1} (Y_0) ,$$
where $A$ is $q$-ample. 

Let $q':=q \circ \mu:Z'\to Y$ be the projection to the base. Each fiber $Z'_y$ for $y\in Y°$ can be endowed with a canonical metric $\om_{{\rm can}, y} \in c_1(\mu^*A_y)$ and a Kähler-Einstein metric $\om_{{\rm KE},y}\in c_1(K_{Z'_y}+L_y)$, where $L_y :=L |_{Z'_y}$ is the restriction of the $\Q$-line bundle $L:=\frac 1m f'_*(m(K_{X'/Z'}+B'))^{**}$ to $Z'_y$, endowed with the corresponding restriction of the Narasimhan-Simha metric on $L$.

 In particular, these fiberwise metrics induce singular hermitian metrics $e^{-\phi_{\rm can}}$ on $\mu^*A|_{q^{-1}(Y°)}$ and $e^{-\phi_{\rm KE}}$ on $L|_{q'^{-1}(Y°)}$ respectively. As seen in the Subsection~\ref{relatsubsect}, there exists a $\mu$-exceptional effective divisor $E_{Z'}$ on $Z'$ such that $\phi_{\rm KE}=\phi_{\rm can}+ [E_{Z'}]$.

 Assuming Conjecture~\ref{conj}, $e^{-\phi_{\rm KE}}$ is a positively curved metric on $(K_{Z'/Y}+L)|_{q'^{-1}(Y°)}$ that extends canonically to a positively curved metric on $K_{Z'/Y}+L$ on the whole $Z'$. As $\phi_{\rm can}$ comes from $Z$ and $E_{Z'}$ is $\mu$-exceptional, if follows that $e^{-\phi_{\rm can}}$ is a positively curved metric on $\mu^*A|_{q'^{-1}(Y°)}$ that extends canonically to $Z'$. This proves Theorem~\ref{corb}. 


\bibliographystyle{smfalpha}
\bibliography{biblio}

\end{document}